\newtheorem{lemma}{Lemma}
\newtheorem{theorem}{Theorem}
\newtheorem{corollary}{Corollary}
\newcommand*{\cd}{(\cdot)}
\newcommand*{\lp}{L_p(T,\mu)}
\newcommand*{\lr}{L_r(T,\mu)}
\newcommand*{\lqq}{L_q(T,\mu)}
\newcommand*{\lpn}{L_p(T_0,\mu)}
\newcommand*{\wx}{\widehat x}
\newcommand*{\ov}{\overline}
\newcommand*{\iTn}{\int_{T_0}}
\newcommand*{\iT}{\int_T}
\newcommand*{\wm}{\widehat m}
\newcommand*{\LL}{\mathcal L}
\newcommand*{\wl}{\widehat\lambda}
\newcommand*{\wu}{\widehat u}
\newcommand*{\iTnn}{\int_{T_1}}
\newcommand*{\wCC}{\widetilde C}
\newcommand*{\wk}{\widetilde k}
\newcommand*{\wtw}{\widetilde w}
\newcommand*{\wt}{\widetilde\theta}
\newcommand*{\wg}{\widetilde\gamma}
\newcommand*{\wII}{\widetilde I}
\newcommand*{\wps}{\widetilde\psi}
\newcommand*{\wva}{\widetilde\varphi}
\newcommand*{\wxi}{\widehat\xi}
\newcommand*{\wpp}{\widehat p}
\newcommand*{\wqq}{\widehat q}
\newcommand*{\vq}{\widetilde q}
\newcommand*{\wC}{\widetilde K}
\newcommand*{\wtt}{\widetilde t}
\newcommand*{\wc}{\widetilde s}
\newcommand*{\ld}{L_2(\mathbb R^d)}
\newcommand*{\lpd}{L_p(\mathbb R^d)}
\newcommand*{\wL}{\widetilde L}
\newcommand*{\wwg}{\widehat\gamma}
\newcommand*{\wwq}{\widehat q^*}
\newcommand*{\iRd}{\int_{\mathbb R^d}}
\newcommand*{\la}{\langle}
\newcommand*{\ra}{\rangle}
\DeclareMathOperator*{\vraisup}{vraisup}
\DeclareMathOperator*{\mes}{mes}
\begin{document}

\title[Optimal recovery and generalized Carlson inequality]{Optimal recovery and generalized Carlson inequality for weights with symmetry properties}
\author{K.~Yu.~Osipenko}

\address{Moscow State University, Moscow and Institute for Information Transmission Problems,
Russian Academy of Sciences, Moscow}
\email{kosipenko@yahoo.com}
\subjclass[2010]{41A65, 41A46, 49N30}
\keywords{Optimal recovery, Carlson type inequalities, Fourier transform, sharp constants}
\begin{abstract}
The paper concerns problems of the recovery of operators from noisy information in weighted $L_q$-spaces with homogeneous weights. A number of general theorems are proved and applied to finding exact constants in multidimensional Carlson type inequalities with several weights and problems of the recovery of differential operators from a noisy Fourier transform. In particular, optimal methods are obtained for the recovery of powers of generalized Laplace operators from a noisy Fourier transform in the $L_p$-metric.
\end{abstract}

\maketitle

\section{Introduction}

Let $T$ be a nonempty set, $\Sigma$ be the $\sigma$-algebra of subsets of $T$, and $\mu$ be a nonnegative $\sigma$-additive measure on $\Sigma$. We denote by $L_p(T,\Sigma,\mu)$ (or simply $\lp$) the set of all $\Sigma$-measurable functions with values in $\mathbb R$ or in $\mathbb C$ for which
\begin{align*}
\|x\cd\|_{\lp}&=\biggl(\int_T|x(t)|^p\,d\mu(t)\biggr)^{1/p}<\infty,\quad1\le p<\infty,\\
\|x\cd\|_{L_\infty(T,\mu)}&=\vraisup_{t\in T}|x(t)|<\infty,\quad p=\infty.
\end{align*}
If $T\subset\mathbb R^d$ and $d\mu=dt$, $t\in\mathbb R^d$, we put $L_p(T)=\lp$.

The Carlson inequality \cite{Ca}
\begin{equation*}
\|x(t)\|_{L_1(\mathbb R_+)}\le\sqrt\pi\|x(t)\|_{L_2(\mathbb R_+)}^{1/2}
\|tx(t)\|_{L_2(\mathbb R_+)}^{1/2},\quad\mathbb R_+=[0,+\infty),
\end{equation*}
was generalized by many authors (see \cite{Le}, \cite{An}, \cite{B}, \cite{Os}, \cite{Os21}). In \cite{Os} we found sharp constants for inequalities of the form
\begin{equation*}
\|w\cd x\cd\|_{\lqq}\le K\|w_0\cd x\cd\|_{\lp)}^\gamma\|w_1\cd x\cd\|_{\lr}^{1-\gamma},
\end{equation*}
where $T$ is a cone in a linear space, $w\cd$, $w_0\cd$, and $w_1\cd$ are homogenous functions and $1\le q<p,r<\infty$ (for $T=\mathbb R^d$ the sharp inequality was obtained in \cite{B}). This problem is closely related with the following extremal problem
$$\|w\cd x\cd\|_{\lqq}\to\max,\quad\|w_0\cd x\cd\|_{\lp)}\le\delta,\quad
\|w_1\cd x\cd\|_{\lr}\le1,$$
where $\delta>0$. In this paper we study the extremal problem
\begin{multline}\label{ex1}
\|w\cd x\cd\|_{\lqq}\to\max,\quad\|w_0\cd x\cd\|_{\lp)}\le\delta,\\\|w_j\cd x\cd\|_{\lr}\le1,\ j=1,\ldots,n,
\end{multline}
where $w\cd$, $w_0\cd$, and $w_j\cd$, $j=1,\ldots,n$, are homogenous functions with some symmetry properties. Using the solution of this problem we obtain the sharp constant $K$ for the inequality
\begin{equation*}
\|w\cd x\cd\|_{\lqq}\le K\|w_0\cd x\cd\|_{\lp)}^\gamma\left(\max_{1\le j\le n}\|\omega_j\cd x\cd\|_{\lr}\right)^{1-\gamma}.
\end{equation*}
In particular, we find the sharp constant for the inequality
\begin{equation*}
\|w\cd x\cd\|_{L_q(\mathbb R_+^d)}\le C\|w_0\cd x\cd\|_{L_p(\mathbb R_+^d)}^{p\alpha}\left(\max_{1\le j\le d}\|\omega_j\cd x\cd\|_{L_r(\mathbb R_+^d)}\right)^{r\beta},
\end{equation*}
where $w(t)=(t_1^2+\ldots+t_d^2)^{\theta/2}$, $w_0(t)=(t_1^2+\ldots+t_d^2)^{\theta_0/2}$,
$w_j(t)=t_j^{\theta_1}$, $j=1,\ldots,d$, $\theta=d(1-1/q)$, $\theta_0=d-(\lambda+d)/p$, $\theta_1=d+(\mu-d)/r$,
\begin{equation*}
\alpha=\frac\mu{p\mu+r\lambda},\quad\beta=\frac\lambda{p\mu+r\lambda},\quad\lambda,\mu>0,
\end{equation*}
and $(p,q,r)\in P\cup P_1\cup P_2$, where
\begin{multline*}
P=\{\,(p,q,r):1\le q<p,r\,\},\quad P_1=\{\,(p,q,r):1\le q=r<p\,\},\\ P_2=\{\,(p,q,r):1\le q=p<r\,\}.
\end{multline*}
For $d=1$, $q=1$, and $(p,1,r)\in P$ this result was proved in \cite{Le} (see also \cite{B}).

It is appeared that the value of \eqref{ex1} is the error of optimal recovery of the operator $\Lambda x\cd=w\cd x\cd$ on the class of functions $x\cd$ such that $\|w_j\cd x\cd\|_{\lr}\le1$, $j=1,\ldots,n$, by the information about the function $w_0\cd x\cd$ given with the error $\delta$ in $L_p$-norm. Therefore, in section \ref{main} we begin with the setting of optimal recovery problem  and then in section \ref{res} we prove some general theorems. In section \ref{hom} we consider the case when weights are homogeneous in a cone of linear space and section \ref{rd} is devoted to the case of $\mathbb R^d$. In section \ref{dif} the results obtained are applied to optimal recovery and sharp inequalities of differential operators defined by Fourier transforms.

\section{General setting}\label{main}

Let $T_0$ is not empty $\mu$-measurable subset of $T$. Put
\begin{gather*}
\mathcal W=\{\,x\cd:x\cd\in\lpn,\ \|\varphi_j\cd x\cd\|_{\lr}<\infty,\ j=1,\ldots,n\,\},\\
W=\{\,x\cd\in\mathcal W:\|\varphi_j\cd x\cd\|_{\lr}\le1,\ j=1,\ldots,n\,\},
\end{gather*}
where $1\le p,r\le\infty$, and $\varphi_j\cd$ is a measurable function on $T$. Consider the problem of recovery of operator $\Lambda\colon\mathcal W\to L_q(T,\mu)$, $1\le q\le\infty$, defined by equality $\Lambda x\cd=\psi\cd x\cd$, where $\psi\cd$ is a measurable function on $T$, on the class $W$ by the information about functions $x\cd\in W$ given inaccurately (we assume that $\psi\cd$ and $\varphi_j\cd$, $j=1,\ldots,n$, such that $\Lambda$ maps $\mathcal W$ to $\lqq$). More precisely, we assume that for any function $x\cd\in W$ we know $y\cd\in\lpn$ such that $\|x\cd-y\cd\|_{\lpn}\le\delta$, $\delta>0$. We want to approximate the value $\Lambda x\cd$ knowing $y\cd$. As recovery methods we consider all possible mappings $m\colon\lpn\to\lqq$.
The error of a~method~$m$ is defined as
\begin{equation*}
e(p,q,r,m)=\sup_{\substack{x\cd\in W,\ y\cd\in\lpn\\\|x\cd-y\cd\|_{\lpn}\le\delta}}\|\Lambda x\cd-m(y)\cd\|_{\lqq}.
\end{equation*}

The quantity
\begin{equation}\label{Ep}
E(p,q,r)=\inf_{m\colon\lpn\to\lqq}e(p,q,r,m)
\end{equation}
is known as the optimal recovery error, and a method on which this infimum is attained is called optimal. Various settings of optimal recovery theory and examples of such problems may be found in \cite{MR}, \cite{TW}, \cite{Pl}, \cite{Osb}, \cite{Osb22}.

For the lower bound of $E(p,q,r)$ we use the following result which was proved (in more or less general forms) in many papers (see, for example, \cite{Os14}).

\begin{lemma}\label{L1}
\begin{equation}\label{lb}
E(p,q,r)\ge\sup_{\substack{x\cd\in W\\\|x\cd\|_{\lpn}\le\delta}}\|\Lambda x\cd\|_{\lqq}.
\end{equation}
\end{lemma}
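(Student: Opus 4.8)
The plan is to use the classical ``hardest one\nobreakdash-dimensional subproblem'' device, exploiting the central symmetry of the class $W$ together with the linearity of $\Lambda$. First I would fix an arbitrary admissible element $x\cd\in W$ with $\|x\cd\|_{\lpn}\le\delta$ and note that $-x\cd$ is admissible as well: the defining constraints of $W$ are the inequalities $\|\varphi_j\cd x\cd\|_{\lr}\le1$, which are unchanged under $x\cd\mapsto-x\cd$, so $-x\cd\in W$, and of course $\|-x\cd\|_{\lpn}=\|x\cd\|_{\lpn}\le\delta$. Thus both $x\cd$ and $-x\cd$ belong to the admissible set appearing on the right\nobreakdash-hand side of \eqref{lb}.

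Next I would feed both $x\cd$ and $-x\cd$ the \emph{same} information $y\cd\equiv0\in\lpn$. This is legitimate in the definition of $e(p,q,r,m)$ because $\|x\cd-0\|_{\lpn}\le\delta$ and $\|-x\cd-0\|_{\lpn}\le\delta$. Hence, for any recovery method $m\colon\lpn\to\lqq$,
\[
\|\Lambda x\cd-m(0)\cd\|_{\lqq}\le e(p,q,r,m),\qquad
\|\Lambda(-x\cd)-m(0)\cd\|_{\lqq}\le e(p,q,r,m).
\]
Since $\Lambda$ is multiplication by $\psi\cd$, it is odd, $\Lambda(-x\cd)=-\Lambda x\cd$, and therefore, by the triangle inequality,
\[
2\|\Lambda x\cd\|_{\lqq}
=\bigl\|(\Lambda x\cd-m(0)\cd)-(\Lambda(-x\cd)-m(0)\cd)\bigr\|_{\lqq}
\le 2\,e(p,q,r,m).
\]
So $\|\Lambda x\cd\|_{\lqq}\le e(p,q,r,m)$ for every method $m$ and every admissible $x\cd$; taking the supremum over such $x\cd$ and then the infimum over $m$ yields \eqref{lb}.

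I do not expect any genuine obstacle here: the argument is entirely elementary, and the only points needing (routine) care are verifying that the zero function is admissible information for both $\pm x\cd$ and that the central symmetry of the constraint set $W$ is what makes $-x\cd$ admissible; everything else is one application of the triangle inequality.
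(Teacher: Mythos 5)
Your argument is correct and is precisely the standard symmetry argument behind Lemma~\ref{L1}: the paper does not reprove the lemma but cites \cite{Os14}, where essentially the same reasoning (central symmetry of $W$, feeding the zero information $y\cd\equiv0$ to both $\pm x\cd$, and one triangle inequality using the linearity of $\Lambda$) is used. Nothing further is needed.
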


\section{Main results}\label{res}

Set
\begin{equation*}
\chi_0(t)=\begin{cases}
1,&t\in T_0,
\\
0,&t\notin T_0,
\end{cases}\quad\sigma_r(t)=\sum_{j=1}^n\lambda_j|\varphi_j(t)|^r.
\end{equation*}

\begin{theorem}\label{T1}
Let $1\le q<p,r$, $\lambda_j\ge0$, $j=0,1,\ldots,n$, $\lambda_0+\sigma_r(t)\ne0$ for almost all $t\in T_0$, $\sigma_r(t)\ne0$ for almost all $t\in T\setminus T_0$, $\wx(t)\ge0$ be a solution of equation
\begin{equation}\label{ko1}
-q|\psi(t)|^q+p\lambda_0x^{p-q}(t)\chi_0(t)+r\sigma_r(t)x^{r-q}(t)=0,
\end{equation}
$\ov\lambda$ such that
\begin{multline}\label{nez}
\iTn\wx^p(t)\,d\mu(t)\le\delta^p,\quad\iT|\varphi_j(t)|^r\wx^r(t)\,d\mu(t)\le1,\ j=1,\ldots,n,\\
\lambda_0\biggl(\iTn\wx^p(t)\,d\mu(t)-\delta^p\biggr)=0,\quad
\lambda_j\biggr(\iT|\varphi_j(t)|^r\wx^r(t)\,d\mu(t)-1\biggr)=0,\ j=1,\ldots,n.
\end{multline}
Then
\begin{equation}\label{Epqr}
E(p,q,r)=\biggl(q^{-1}p\lambda_0\delta^p+q^{-1}r\sum_{j=1}^n\lambda_j\biggr)^{1/q},
\end{equation}
and the method
\begin{equation}\label{met}
\wm(y)(t)=\begin{cases}q^{-1}p\lambda_0\wx^{p-q}(t)|\psi(t)|^{-q}
\psi(t)y(t),&t\in T_0,\ \psi(t)\ne0,\\
0,&\mbox{otherwise},\end{cases}
\end{equation}
is optimal recovery method.
\end{theorem}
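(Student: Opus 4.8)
The plan is to establish the two matching bounds separately. For the lower bound, I would invoke Lemma~\ref{L1}, which reduces the problem to producing a feasible $x\cd\in W$ with $\|x\cd\|_{\lpn}\le\delta$ on which $\|\Lambda x\cd\|_{\lqq}$ equals the claimed value of $E(p,q,r)$. The natural candidate is $\wx\cd$ itself (times a unimodular phase factor, irrelevant here since only $|\wx|$ matters): the constraints~\eqref{nez} say exactly that $\wx\cd$ is feasible for the extremal problem~\eqref{ex1}, and a direct computation using equation~\eqref{ko1} should show that $\|\psi\cd\wx\cd\|_{\lqq}^q=\iT|\psi(t)|^q\wx^q(t)\,d\mu(t)$ equals $q^{-1}p\lambda_0\iTn\wx^p\,d\mu+q^{-1}r\sum_j\lambda_j\iT|\varphi_j|^r\wx^r\,d\mu$; then the complementary slackness relations in~\eqref{nez} turn $\iTn\wx^p\,d\mu$ into $\delta^p$ (when $\lambda_0>0$) and each $\iT|\varphi_j|^r\wx^r\,d\mu$ into $1$ (when $\lambda_j>0$), yielding precisely~\eqref{Epqr}. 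One must check that $\wx\cd$ actually lies in the relevant spaces, but~\eqref{nez} guarantees the finiteness of all the norms involved.

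For the upper bound I would estimate the error of the proposed method $\wm$ directly. Fix $x\cd\in W$ and $y\cd$ with $\|x\cd-y\cd\|_{\lpn}\le\delta$. Write $\Lambda x\cd-\wm(y)\cd=\psi\cd x\cd-\wm(y)\cd$ and split the integral defining $\|\Lambda x\cd-\wm(y)\cd\|_{\lqq}^q$ over $T_0\cap\{\psi\ne0\}$ and its complement. On the latter set $\wm(y)=0$ and the weight $|\psi|$ vanishes or $t\notin T_0$; here one uses $q<p,r$ together with~\eqref{ko1} to bound $|\psi(t)|^q\wx^{?}$-type expressions. The main inequality should come from a pointwise Young/H\"older-type estimate: for each $t$, bound $|\psi(t)x(t)-c(t)\psi(t)y(t)|^q$ (with $c(t)=q^{-1}p\lambda_0\wx^{p-q}(t)|\psi(t)|^{-q}$ on $T_0$) by a convex combination, via Young's inequality with appropriate exponents, of $|x(t)-y(t)|^p$, $|\varphi_j(t)x(t)|^r$, and terms proportional to $\wx^p(t)$, $\wx^r(t)$; the coefficients are chosen so that the "free" terms integrate (using~\eqref{nez} again) to exactly $q^{-1}p\lambda_0\delta^p+q^{-1}r\sum_j\lambda_j$, while the coefficients of $|x-y|^p$ and $|\varphi_j x|^r$ are at most $\delta^{-p}\cdot(q^{-1}p\lambda_0\delta^p)$ and $(q^{-1}r\lambda_j)$ respectively, so that integrating the data constraints $\|x-y\|_{\lpn}^p\le\delta^p$ and $\|\varphi_j x\|_{\lr}^r\le1$ does not increase the total. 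Summing over $j$ and over the two pieces of $T$ then reproduces~\eqref{Epqr} as an upper bound for $e(p,q,r,\wm)$, hence for $E(p,q,r)$.

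The key algebraic identity making the Young-type estimate tight is the stationarity equation~\eqref{ko1}: at the point $x=\wx(t)$ the function $x\mapsto -q|\psi(t)|^qx^q + $ (the majorizing convex combination, viewed as a function of the trial value) has a critical point, which is what forces equality in the pointwise bound along $x\cd=\wx\cd$ and thereby closes the gap between the two bounds. I expect the main obstacle to be the bookkeeping in this pointwise convexity estimate --- choosing the three (or $n+1$) exponents in the multi-term Young inequality correctly given the mixed regime $1\le q<p,r$, handling the degenerate sets where $\psi(t)=0$, $\wx(t)=0$, or $t\notin T_0$ without dividing by zero, and verifying that the exponent arithmetic $(p-q)/(p-q)$-type cancellations in~\eqref{met} are consistent. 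A secondary subtlety is justifying that a nonnegative measurable solution $\wx\cd$ of~\eqref{ko1} exists and is essentially unique for a.e.\ $t$ (the left side of~\eqref{ko1}, as a function of $x\ge0$, is strictly increasing from $-q|\psi(t)|^q\le0$ to $+\infty$ whenever $\lambda_0\chi_0(t)+\sigma_r(t)\ne0$), but this is presumably covered by the hypotheses and does not need to be reproved here.
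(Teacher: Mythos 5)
Your route is the paper's own: the lower bound is verbatim (Lemma~\ref{L1}, feasibility of $\wx\cd$ from \eqref{nez}, the identity $|\psi(t)|^q\wx^q(t)=q^{-1}p\lambda_0\wx^p(t)\chi_0(t)+q^{-1}r\sigma_r(t)\wx^r(t)$ obtained from \eqref{ko1}, then complementary slackness), and your upper bound --- a pointwise majorization of the error integrand of $\wm$ that is tight at $\wx\cd$ because of \eqref{ko1} --- is exactly what the paper does, packaged as pointwise minimization of the Lagrangian of the auxiliary problem (Lemma~\ref{L3}) plus the Kuhn--Tucker sufficiency Lemma~\ref{L2}.

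Two places where your sketch, as written, would not go through. First, the announced bookkeeping double-counts: if the $\wx$-dependent ``free'' terms already integrate to $q^{-1}p\lambda_0\delta^p+q^{-1}r\sum_{j=1}^n\lambda_j$ and you additionally allow coefficients up to $q^{-1}p\lambda_0$ on $|x-y|^p$ and $q^{-1}r\lambda_j$ on $|\varphi_j(t)x(t)|^r$, then integrating the data constraints adds the same quantity again and you arrive at twice the target. The correct split is the Lagrangian one: with $\alpha(t)=q^{-1}p\lambda_0\wx^{p-q}(t)|\psi(t)|^{-q}$, $u=|x(t)|$, $v=|x(t)-y(t)|$,
\begin{equation*}
|\psi(t)|^q\bigl((1-\alpha(t))u+\alpha(t)v\bigr)^q\le\lambda_0v^p\chi_0(t)+\sigma_r(t)u^r
+\Bigl(\tfrac pq-1\Bigr)\lambda_0\wx^p(t)\chi_0(t)+\Bigl(\tfrac rq-1\Bigr)\sigma_r(t)\wx^r(t),
\end{equation*}
whose integral, combined with $\|x\cd-y\cd\|_{\lpn}^p\le\delta^p$, $\|\varphi_j\cd x\cd\|_{\lr}^r\le1$ and \eqref{nez}, yields \eqref{Epqr} exactly once. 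Second, this pointwise inequality is the mathematical core and does not follow from stationarity alone: the function $(u,v)\mapsto-|\psi(t)|^q((1-\alpha(t))u+\alpha(t)v)^q+\lambda_0v^p\chi_0(t)+\sigma_r(t)u^r$ is not convex, so a critical point at $u=v=\wx(t)$ does not by itself give a global minimum over $u,v\ge0$; this is precisely the paper's Lemma~\ref{L3}. A proof needs $\alpha(t)\in[0,1]$ (which follows from \eqref{ko1} and $\lambda_j\ge0$, and is also what legitimizes replacing $|(1-\alpha(t))x(t)+\alpha(t)z(t)|$ by $(1-\alpha(t))|x(t)|+\alpha(t)|z(t)|$), then Jensen's inequality $((1-\alpha)u+\alpha v)^q\le(1-\alpha)u^q+\alpha v^q$ for $q\ge1$, and finally two one-variable minimizations whose unique minimizer is $\wx(t)$. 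With these two repairs your argument coincides with the paper's proof.
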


To prove this theorem we need some preliminary results. The first one is actually a sufficient condition in the Kuhn-Tucker theorem (the only difference is that we do not require convexity of functions).

Let $f_j\colon A\to\mathbb R$, $j=0,1,\ldots,k$, be functions defined on some set $A$. Consider the extremal problem
\begin{equation}\label{ex0}
f_0(x)\to\max,\quad f_j(x)\le0,\quad j=1,\ldots,k,\quad x\in A,
\end{equation}
and write down its Lagrange function
\begin{equation*}
\LL(x,\lambda)=-f_0(x)+\sum_{j=1}^k\lambda_jf_j(x),\quad
\lambda=(\lambda_1,\ldots,\lambda_k).
\end{equation*}

\begin{lemma}\label{L2}
Assume that there exist $\wl_j\ge0$, $j=1,\ldots,k$, and an element $\wx\in A$,
admissible for problem \eqref{ex0}, such that
\begin{align*}
(a)&\quad\min_{x\in A}\LL(x,\wl)=\LL(\wx,\wl),\quad\wl=(\wl_1,\ldots,\wl_k),\\
(b)&\quad\wl_jf_j(\wx)=0,\ j=1,\ldots,k.
\end{align*}
Then $\wx$ is an extremal element for problem \eqref{ex0}.
\end{lemma}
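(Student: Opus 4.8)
The plan is to verify that the hypotheses of Lemma~\ref{L2} do the work directly, since this is a standard Kuhn--Tucker sufficiency argument stripped of convexity. First I would spell out what it means for $\wx$ to be an extremal element of \eqref{ex0}: for every admissible $x\in A$ (i.e.\ every $x$ with $f_j(x)\le0$ for $j=1,\ldots,k$), one has $f_0(x)\le f_0(\wx)$. So fix such an admissible $x$.

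Next I would chain the three ingredients together. By hypothesis $(a)$, $\LL(\wx,\wl)\le\LL(x,\wl)$, that is,
\begin{equation*}
-f_0(\wx)+\sum_{j=1}^k\wl_jf_j(\wx)\le-f_0(x)+\sum_{j=1}^k\wl_jf_j(x).
\end{equation*}
By the complementary slackness condition $(b)$, the sum on the left vanishes, so $-f_0(\wx)\le-f_0(x)+\sum_{j=1}^k\wl_jf_j(x)$. Finally, since each $\wl_j\ge0$ and $f_j(x)\le0$ by admissibility of $x$, every term $\wl_jf_j(x)$ is $\le0$, hence $\sum_{j=1}^k\wl_jf_j(x)\le0$. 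Combining, $-f_0(\wx)\le-f_0(x)$, i.e.\ $f_0(x)\le f_0(\wx)$. Since $\wx$ is itself admissible and $x$ was an arbitrary admissible point, $\wx$ solves \eqref{ex0}.

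There is essentially no obstacle here: the lemma is purely formal, and the only thing to be careful about is the direction of the inequalities (we are maximizing $f_0$, so the Lagrangian carries $-f_0$, and the minimum of $\LL$ over $A$ is what matters) and the sign bookkeeping coming from $\wl_j\ge0$ together with $f_j(x)\le0$. I would also note explicitly that no topology, differentiability, or convexity of the $f_j$ is used — the set $A$ is arbitrary — which is exactly the point the authors make before the statement, and which is what makes the lemma applicable later to the non-convex functionals arising from the $L_p$-norms with $p\ne 1$.
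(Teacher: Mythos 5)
Your argument is correct and is essentially the paper's own proof: the paper chains $-f_0(x)\ge\LL(x,\wl)\ge\LL(\wx,\wl)=-f_0(\wx)$ for any admissible $x$, which is exactly your combination of the sign condition $\wl_jf_j(x)\le0$, hypothesis $(a)$, and complementary slackness $(b)$. Nothing further is needed.
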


\begin{proof}
For any $x$ admissible for problem \eqref{ex0} we have
\begin{equation*}
-f_0(x)\ge\LL(x,\wl)\ge\LL(\wx,\wl)=-f_0(\wx).
\end{equation*}
\end{proof}

Put
$$F(u,v,\alpha)=-((1-\alpha)u+\alpha v)^q+av^p+bu^r,\quad u,v\ge0,\quad\alpha\in[0,1],$$
where $a,b\ge0$, and $1\le p,q,r<\infty$.

\begin{lemma}[\cite{Os}]\label{L3}
For all $a,b\ge0$, $a+b>0$, and all $1\le q<p,r<\infty$, there exists the unique solution $\wu>0$ of the equation
\begin{equation*}
-q+pau^{p-q}+rbu^{r-q}=0.
\end{equation*}
Moreover, for all $u,v\ge0$ and $\alpha=q^{-1}pa\wu^{p-q}=1-q^{-1}rb\wu^{r-q}$
\begin{equation*}
F(\wu,\wu,\alpha)\le F(u,v,\alpha).
\end{equation*}
In particular, for all $u\ge0$
\begin{equation*}
-\wu^q+a\wu^p+b\wu^r\le-u^q+au^p+bu^r.
\end{equation*}
\end{lemma}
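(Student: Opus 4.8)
The plan is to separate the two assertions: the existence/uniqueness of the root, which is routine monotonicity, and the variational inequality $F(\wu,\wu,\alpha)\le F(u,v,\alpha)$, whose proof I would reduce to two independent one-variable problems by a single convexity step.

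First I would establish the root. Setting $g(u)=-q+pau^{p-q}+rbu^{r-q}$ for $u>0$, I note that $p-q>0$, $r-q>0$ and $a+b>0$, so $g$ is continuous and strictly increasing, with $g(u)\to-q<0$ as $u\to0^+$ and $g(u)\to+\infty$ as $u\to+\infty$. The intermediate value theorem together with strict monotonicity gives a unique zero $\wu>0$. From $g(\wu)=0$ I read off $pa\wu^{p-q}+rb\wu^{r-q}=q$, so that $\alpha:=q^{-1}pa\wu^{p-q}$ automatically satisfies $\alpha=1-q^{-1}rb\wu^{r-q}$, and both expressions show $\alpha\in[0,1]$.

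The heart of the matter is the inequality, and the main obstacle is precisely that the term $-w^q$ with $w=(1-\alpha)u+\alpha v$ is concave, so a linearization (tangent-line) lower bound points the wrong way and does not close the argument. The right move is Jensen's inequality: since $t\mapsto t^q$ is convex on $[0,\infty)$ for $q\ge1$ and $(1-\alpha)+\alpha=1$, one has $((1-\alpha)u+\alpha v)^q\le(1-\alpha)u^q+\alpha v^q$, whence
\begin{equation*}
F(u,v,\alpha)\ge\bigl(bu^r-(1-\alpha)u^q\bigr)+\bigl(av^p-\alpha v^q\bigr)=:\phi(u)+\psi(v).
\end{equation*}
This lower bound decouples into a function of $u$ alone plus a function of $v$ alone, which I may then minimize independently over $[0,\infty)$.

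Finally I would minimize each summand. Since $\phi'(u)=u^{q-1}\bigl(rbu^{r-q}-q(1-\alpha)\bigr)$ and $rb\wu^{r-q}=q(1-\alpha)$, the bracket changes sign exactly at $u=\wu$, so $\phi$ attains its global minimum there; similarly $\psi'(v)=v^{q-1}\bigl(pav^{p-q}-q\alpha\bigr)$ together with $pa\wu^{p-q}=q\alpha$ forces the minimum of $\psi$ at $v=\wu$. Adding the two minimal values gives $\phi(\wu)+\psi(\wu)=a\wu^p+b\wu^r-\wu^q=F(\wu,\wu,\alpha)$, which is the asserted inequality; since Jensen is an equality when $u=v=\wu$, the bound is sharp and $(\wu,\wu)$ is genuinely a minimizer. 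The "in particular" statement is just the restriction to the diagonal $u=v$, where $F(u,u,\alpha)=-u^q+au^p+bu^r$. The only loose ends to check are the degenerate cases: if $a=0$ then $\alpha=0$ and $\psi\equiv0$, while if $b=0$ then $\alpha=1$ and $\phi\equiv0$, so the vacuous one-variable problem causes no trouble and the argument goes through verbatim.
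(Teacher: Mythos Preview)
Your argument is correct. The existence/uniqueness of $\wu$ via the monotonicity of $g(u)=-q+pau^{p-q}+rbu^{r-q}$ is fine, and the key step---applying the convexity of $t\mapsto t^q$ to split $F(u,v,\alpha)$ into a sum $\phi(u)+\psi(v)$ and then minimizing each summand separately---is clean and complete, including the degenerate endpoints $a=0$ or $b=0$.

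As for comparison with the paper: the paper does not supply its own proof of this lemma; it is simply quoted from the earlier work~\cite{Os}. So there is no in-paper argument to match against. Your Jensen-then-decouple approach is in any case the natural one for this kind of two-variable weighted-power inequality, and it is likely close in spirit to the original proof in~\cite{Os}.
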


\begin{proof}[Proof of Theorem~\ref{T1}]

1. Lower estimate. The extremal problem on the right-hand side of \eqref{lb} (for convenience, we raise the quantity to be maximized to the $q$-th power) is as follows:
\begin{multline}\label{ex}
\iT|\psi(t)x(t)|^q\,d\mu(t)\to\max,\quad\iTn|x(t)|^p\,d\mu(t)\le\delta^p,\\
\iT|\varphi_j(t)x(t)|^r\,d\mu(t)\le1,\ j=1,\ldots,n.
\end{multline}
If $t\in T$ such that $\psi(t)=0$, then evidently $\wx(t)=0$. If $\psi(t)\ne0$ we obtain by Lemma~\ref{L3} that that there is the unique solution $\wx(t)$ of \eqref{ko1}. It follows by \eqref{nez} that $\wx\cd$ is admissible function for problem \eqref{ex}. Therefore, by \eqref{lb} we obtain
\begin{equation*}
E(p,q,r)\ge\biggl(\iT|\psi(t)|^q\wx^q(t)\,d\mu(t)\biggr)^{1/q}.
\end{equation*}
From \eqref{ko1} we have
\begin{equation*}
|\psi(t)|^q\wx^q(t)=q^{-1}p\lambda_0\wx^p(t)\chi_0(t)+q^{-1}r\sigma_r(t)\wx^r(t).
\end{equation*}
Integrating this equality over the set $T$, we obtain
\begin{equation*}
\iT|\psi(t)|^q\wx^q(t)\,d\mu(t)=q^{-1}p\lambda_0\delta^p+q^{-1}r\sum_{j=1}^n\lambda_j.
\end{equation*}
Thus,
\begin{equation*}
E(p,q,r)\ge\biggl(q^{-1}p\lambda_0\delta^p+q^{-1}r\sum_{j=1}^n\lambda_j\biggr)^{1/q}.
\end{equation*}

2. Upper estimate. To estimate the error of method \eqref{met} we need to find the value of the extremal problem:
\begin{multline}\label{La}
\iTn|\psi(t)x(t)-\psi(t)\alpha(t)y(t)|^q\,d\mu(t)+\int_{T\setminus T_0}|\psi(t)x(t)|^q\,d\mu(t)\to\max,\\\iTn|x(t)-y(t)|^p\,d\mu(t)\le
\delta^p,\quad\iT|\varphi_j(t)x(t)|^r\,d\mu(t)\le1,\ j=1,\ldots,n,
\end{multline}
where
\begin{equation*}
\alpha(t)=\begin{cases}q^{-1}p\lambda_0\wx^{p-q}(t)|\psi(t)|^{-q},&t\in T_0,\ \psi(t)\ne0,\\
0,&\mbox{otherwise}.\end{cases}
\end{equation*}
Put
\begin{equation*}
z(t)=\begin{cases}x(t)-y(t),&t\in T_0,\\
0,&t\in T\setminus T_0.\end{cases}
\end{equation*}
Then \eqref{La} may be rewritten as follows:
\begin{multline*}
\iT|\psi(t)|^q|(1-\alpha(t))x(t)+\alpha(t)z(t)|^q\,d\mu(t)\to\max,\\
\iTn|z(t)|^p\,d\mu(t)\le\delta^p,\quad\iT|\varphi_j(t)x(t)|^r\,d\mu(t)\le1,\ j=1,\ldots,n.
\end{multline*}
The value of this problem does not exceed the value of the problem
\begin{multline}\label{uv}
\iT|\psi(t)|^q((1-\alpha(t))u(t)+\alpha(t)v(t))^q\,d\mu(t)\to\max,\\\iTn v^p(t)\,d\mu(t)\le\delta^p,\quad
\iT|\varphi_j(t)|^ru^r(t)\,d\mu(t)\le1,\ j=1,\ldots,n,\\ u(t)\ge0,\ v(t)\ge0\quad\mbox{for almost all }\ t\in T.
\end{multline}
The Lagrange function for this problem is
\begin{equation*}
\LL(u\cd,v\cd,\ov\lambda)=\iT L(t,u(t),v(t),\ov\lambda)\,d\mu(t),
\end{equation*}
where
\begin{equation*}
L(t,u,v,\ov\lambda)=-|\psi(t)|^q((1-\alpha(t))u+\alpha(t)v)^q
+\lambda_0v^p\chi_0(t)+\sigma_r(t)u^r.
\end{equation*}
By Lemma~\ref{L3} we have
\begin{equation*}
L(t,\wx(t),\wx(t),\ov\lambda)\le L(t,u(t),v(t),\ov\lambda).
\end{equation*}
Thus,
\begin{equation*}
\LL(\wx\cd,\wx\cd,\ov\lambda)\le\LL(u\cd,v\cd,\ov\lambda).
\end{equation*}
It follows by Lemma~\ref{L2} that functions $u\cd=v\cd=\wx\cd$ are extremal in \eqref{uv}. Consequently,
$$e(p,q,r,\wm)\le\biggl(\iT|\psi(t)|^q\wx^q(t)\,d\mu(t)\biggr)^{1/q}
=\biggl(q^{-1}p\lambda_0\delta^p+q^{-1}r\sum_{j=1}^n\lambda_j\biggr)^{1/q}\le E(p,q,r).$$
It means that method \eqref{met} is optimal and equality \eqref{Epqr} holds.
\end{proof}

Denote $a_+=\max\{a,0\}$.

\begin{theorem}\label{T2}
Let $1\le q=r<p$, $\lambda_0>0$, $\lambda_j\ge0$, $j=1,\ldots,n$,
\begin{equation}\label{xT2}
\wx(t)=\begin{cases}\displaystyle\left(\dfrac q{p\lambda_0}\left(|\psi(t)|^q-\sigma_q(t)
\right)_+\right)^{\frac1{p-q}},&t\in T_0,\\
0,&t\notin T_0,\end{cases}
\end{equation}
$\ov\lambda$ satisfies conditions \eqref{nez}, and $|\psi(t)|^q-\sigma_q(t)\le0$ for almost all $t\notin T_0$. Then
\begin{equation}\label{Eq=r}
E(p,q,q)=\biggl(q^{-1}p\lambda_0\delta^p+\sum_{j=1}^n\lambda_j\biggr)^{1/q},
\end{equation}
and the method
\begin{equation}\label{met1}
\wm(y)(t)=\begin{cases}\left(1-|\psi(t)|^{-q}\sigma_q(t)\right)_+\psi(t)y(t),&t\in T_0,\ \psi(t)\ne0,\\
0,&\mbox{othervise},\end{cases}
\end{equation}
is optimal.
\end{theorem}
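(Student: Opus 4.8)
The plan is to mirror the structure of the proof of Theorem~\ref{T1}, replacing the invocation of Lemma~\ref{L3} by a direct one-variable analysis of the function $t\mapsto -t^q+at^p$ (since now $q=r$, the two "lower order" terms $b u^r$ and $-u^q$ collapse into a single power). First I would treat the lower bound via Lemma~\ref{L1}: the extremal problem \eqref{ex} has $\wx\cd$ from \eqref{xT2} as an admissible element by the first line of \eqref{nez}, so
\begin{equation*}
E(p,q,q)\ge\biggl(\iT|\psi(t)|^q\wx^q(t)\,d\mu(t)\biggr)^{1/q}.
\end{equation*}
On $T_0$ where $|\psi(t)|^q>\sigma_q(t)$, the defining formula for $\wx(t)$ gives $p\lambda_0\wx^{p-q}(t)=q(|\psi(t)|^q-\sigma_q(t))$, hence $|\psi(t)|^q\wx^q(t)=q^{-1}p\lambda_0\wx^p(t)+\sigma_q(t)\wx^q(t)$; where $|\psi(t)|^q\le\sigma_q(t)$ we have $\wx(t)=0$ and the identity holds trivially, and off $T_0$ also $\wx(t)=0$ so the identity $|\psi(t)|^q\wx^q(t)=q^{-1}p\lambda_0\wx^p(t)\chi_0(t)+\sigma_q(t)\wx^q(t)$ holds a.e.\ on $T$. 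Integrating and using the complementary-slackness equalities in \eqref{nez} (so that $\lambda_0\int_{T_0}\wx^p\,d\mu=\lambda_0\delta^p$ and $\lambda_j\int_T|\varphi_j|^q\wx^q\,d\mu=\lambda_j$) yields the claimed value $(q^{-1}p\lambda_0\delta^p+\sum_{j=1}^n\lambda_j)^{1/q}$.

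For the upper bound I would estimate $e(p,q,q,\wm)$ for the method \eqref{met1}. Exactly as in Theorem~\ref{T1}, writing $\alpha(t)=(1-|\psi(t)|^{-q}\sigma_q(t))_+$ on $\{t\in T_0:\psi(t)\ne0\}$ and $\alpha(t)=0$ otherwise, and substituting $z(t)=\chi_0(t)(x(t)-y(t))$, the error is bounded by the value of the problem \eqref{uv} with $r$ replaced by $q$. Its Lagrangian splits pointwise as $\int_T L(t,u(t),v(t),\ov\lambda)\,d\mu(t)$ with
\begin{equation*}
L(t,u,v,\ov\lambda)=-|\psi(t)|^q((1-\alpha(t))u+\alpha(t)v)^q+\lambda_0 v^p\chi_0(t)+\sigma_q(t)u^q.
\end{equation*}
The crux is the pointwise inequality $L(t,\wx(t),\wx(t),\ov\lambda)\le L(t,u,v,\ov\lambda)$ for all $u,v\ge0$. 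This is where the argument differs from Theorem~\ref{T1}: since $q=r$, the convexity trick of Lemma~\ref{L3} does not apply directly, but one still has the elementary convexity estimate $((1-\alpha)u+\alpha v)^q\le(1-\alpha)u^q+\alpha v^q$, so it suffices to show $-|\psi(t)|^q(1-\alpha(t))u^q+\sigma_q(t)u^q\ge -|\psi(t)|^q(1-\alpha(t))\wx^q(t)+\sigma_q(t)\wx^q(t)$ in the $u$-variable and the analogous estimate in $v$. The coefficient of $u^q$ is $\sigma_q(t)-|\psi(t)|^q(1-\alpha(t))=\min\{\sigma_q(t),|\psi(t)|^q\}-|\psi(t)|^q\wedge\ldots$; in fact on $\{|\psi|^q>\sigma_q\}\cap T_0$ it equals $0$, and elsewhere (including $t\notin T_0$, using the hypothesis $|\psi(t)|^q-\sigma_q(t)\le0$ a.e.\ there) it is $\ge0$ while $\wx(t)=0$, so minimization in $u$ is achieved at $u=\wx(t)$. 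For the $v$-variable one minimizes $-|\psi(t)|^q\alpha(t)v^q+\lambda_0\chi_0(t)v^p$: on the support of $\alpha$ this is $-q^{-1}p\lambda_0\wx^{p-q}(t)v^q+\lambda_0 v^p$ (using $|\psi(t)|^q\alpha(t)=q^{-1}p\lambda_0\wx^{p-q}(t)$ there), and since $p>q$ the one-variable function $v\mapsto -q^{-1}p c\, v^q+c v^p$ with $c=\lambda_0\wx^{p-q}(t)$ attains its minimum precisely at $v=\wx(t)$; where $\alpha(t)=0$ the $v$-part is $\lambda_0\chi_0(t)v^p\ge0=$ its value at $v=\wx(t)=0$ (note $\wx(t)=0$ there too). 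Combining, $L(t,\cdot,\cdot,\ov\lambda)$ is minimized at $(\wx(t),\wx(t))$.

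Granting this, $\LL(\wx\cd,\wx\cd,\ov\lambda)\le\LL(u\cd,v\cd,\ov\lambda)$ for all admissible $(u\cd,v\cd)$, and the complementary-slackness conditions \eqref{nez} let Lemma~\ref{L2} conclude that $u\cd=v\cd=\wx\cd$ is extremal in the (relaxed) problem \eqref{uv}. Hence
\begin{equation*}
e(p,q,q,\wm)\le\biggl(\iT|\psi(t)|^q\wx^q(t)\,d\mu(t)\biggr)^{1/q}=\biggl(q^{-1}p\lambda_0\delta^p+\sum_{j=1}^n\lambda_j\biggr)^{1/q}\le E(p,q,q),
\end{equation*}
the last inequality being the lower bound already established. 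This forces equality throughout, proving \eqref{Eq=r} and the optimality of \eqref{met1}. The main obstacle I anticipate is the careful bookkeeping in the pointwise Lagrangian minimization across the three regions ($t\in T_0$ with $|\psi|^q>\sigma_q$, $t\in T_0$ with $|\psi|^q\le\sigma_q$, and $t\notin T_0$), making sure the sign hypotheses on $|\psi(t)|^q-\sigma_q(t)$ are exactly what is needed to keep the $u$-coefficient nonnegative off the "active" set; everything else is a routine adaptation of the proof of Theorem~\ref{T1}.
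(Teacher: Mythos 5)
Your proposal is correct and follows the paper's proof essentially step for step: the same lower bound via Lemma~\ref{L1} together with \eqref{nez}, the same relaxation of the method-error problem to nonnegative $(u,v)$, and the same conclusion via Lemma~\ref{L2}; the only deviation is that you decouple the Lagrangian pointwise by convexity of $s\mapsto s^q$, whereas the paper minimizes first in the function variable (using $(1-\alpha(t))|\psi(t)|^q=\sigma_q(t)$ on the set where $\alpha(t)>0$) to reduce to the diagonal and then to the same one-variable power function minimized at $\wx(t)$ --- both variants work. One cosmetic slip: in your paraphrased one-variable function the coefficient of $v^p$ should be $\lambda_0$, not $c=\lambda_0\wx^{p-q}(t)$; the displayed expression $-q^{-1}p\lambda_0\wx^{p-q}(t)v^q+\lambda_0 v^p$ immediately before it is the correct one and is indeed minimized at $v=\wx(t)$.
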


\begin{proof}

1. Lower estimate. It follows by \eqref{nez} that $\wx\cd$ is admissible function for extremal problem in the right-hand side of \eqref{lb}. Therefore,
\begin{equation*}
E(p,q,q)\ge\biggl(\iT|\psi(t)|^q\wx^q(t)\,d\mu(t)\biggr)^{1/q}.
\end{equation*}
From the definition of $\wx\cd$ we have
\begin{equation*}
|\psi(t)|^q\wx^q(t)=q^{-1}p\lambda_0\wx^p(t)\chi_0(t)+\sigma_q(t)\wx^q(t).
\end{equation*}
Integrating this equality, we obtain
\begin{equation*}
\iT|\psi(t)|^q\wx^q(t)\,d\mu(t)=q^{-1}p\lambda_0\delta^p+\sum_{j=1}^n\lambda_j.
\end{equation*}
Thus,
\begin{equation*}
E(p,q,q)\ge\biggl(q^{-1}p\lambda_0\delta^p+\sum_{j=1}^n\lambda_j\biggr)^{1/q}.
\end{equation*}

2. Upper estimate. Put
\begin{equation*}
\alpha(t)=\begin{cases}\left(1-|\psi(t)|^{-q}\sigma_q(t)\right)_+ ,&t\in T_0,\ \psi(t)\ne0,\\
0,&\mbox{othervise},\end{cases}
\end{equation*}
To estimate the error of method \eqref{met1} we need to find the value of the extremal problem:
\begin{multline*}
\iTn|\psi(t)|^q|x(t)-\alpha(t)y(t)|^q\,d\mu(t)+\int_{T\setminus T_0}|\psi(t)x(t)|^q\,d\mu(t)\to\max,\\
\iTn|x(t)-y(t)|^p\,d\mu(t)\le\delta^p,\quad\iT|\varphi_j(t)x(t)|^q\,d\mu(t)\le1,\ j=1,\ldots,n.
\end{multline*}
Putting $z\cd=x\cd-y\cd$ this problem may be rewritten in the following form
\begin{multline*}
\iTn|\psi(t)|^q|(1-\alpha(t))x(t)+\alpha(t)z(t)|^q\,d\mu(t)
+\int_{T\setminus T_0}|\psi(t)x(t)|^q\,d\mu(t)\to\max,\\
\iTn|z(t)|^p\,d\mu(t)\le\delta^p,\quad\iT|\varphi_j(t)x(t)|^q\,d\mu(t)\le1,\ j=1,\ldots,n.
\end{multline*}
The value of this problem evidently coincides with the value of the problem
\begin{multline}\label{uv1}
\iT|\psi(t)|^q((1-\alpha(t))v(t)+\alpha(t)u(t))^q\,d\mu(t)\to\max,\\
\iTn u^p(t)\,d\mu(t)\le\delta^p,\quad\iT|\varphi_j(t)|^qv^q(t)\,d\mu(t)\le1,\ j=1,\ldots,n,\\ u(t),v(t)\ge0,\ \mbox{for almost all }t\in T.
\end{multline}

The Lagrange function of \eqref{uv1} has the form
\begin{equation*}
\LL(u\cd,v\cd,\ov\lambda)=\iT L(u(t),v(t),\ov\lambda)\,d\mu(t),
\end{equation*}
where
\begin{equation*}
L(u,v,\ov\lambda)=\begin{cases}-|\psi(t)|^q((1-\alpha(t))v+\alpha(t)u)^q+\lambda_0u^p+
\sigma_q(t)v^q,&t\in T_0,\\
-|\psi(t)|^qv^q+\sigma_q(t)v^q,&t\notin T_0.\end{cases}
\end{equation*}
If $\alpha(t)>0$, then
\begin{equation*}
\frac{\partial L}{\partial v}=q(v^{q-1}-((1-\alpha(t))v+\alpha(t)u)^{q-1})\sigma_q(t).
\end{equation*}
Therefore, for $\alpha(t)>0$ and any $u>0$, the function $L(u,v,\ov\lambda)$, $v\in(0,+\infty)$, reaches a minimum at $v=u$. Set $T_0'=\{\,t\in T_0:\alpha(t)>0\,\}$. We have
\begin{equation*}
\LL(u\cd,v\cd,\ov\lambda)\ge\int_{T_0'}L(u\cd,u\cd,\ov\lambda)\,d\mu(t).
\end{equation*}
It is easily checked that for $t\in T_0'$ for all $u(t)\ge0$
\begin{equation*}
L(u\cd,u\cd,\ov\lambda)\ge L(\wx\cd,\wx\cd,\ov\lambda).
\end{equation*}
Consequently,
\begin{equation*}
\LL(u\cd,v\cd,\ov\lambda)\ge\int_{T_0'}L(\wx\cd,\wx\cd,\ov\lambda)\,d\mu(t)=
\LL(\wx\cd,\wx\cd,\ov\lambda).
\end{equation*}
Taking into account \eqref{nez} we obtain by Lemma \ref{L2} that $u\cd=v\cd=\wx\cd$ are extremal functions in \eqref{uv1}. Thus,
\begin{equation*}
e^q(p,q,q,\wm)=\iT|\psi(t)\wx(t)|^q\,d\mu(t)=q^{-1}p\lambda_0\delta^p+\sum_{j=1}^n\lambda_j\le E^q(p,q,q).
\end{equation*}
It means that the method $\wm$ is optimal and the optimal recovery error is as stated.
\end{proof}

\begin{theorem}\label{TT3}
Let $1\le q=p<r$, $\lambda_0>0$, $\lambda_j\ge0$, $j=1,\ldots,n$,
$\sigma_r(t)\ne0$ for almost all $t\in T$,
\begin{equation}\label{xT3}
\wx(t)=\begin{cases}\left(pr^{-1}\sigma_r^{-1}(t)(|\psi(t)|^p-\lambda_0)_+\right)^{\frac1{r-p}},
&t\in T_0,\\
\left(pr^{-1}\sigma_r^{-1}(t)|\psi(t)|^p\right)^{\frac1{r-p}},&t\in T \setminus T_0,\end{cases}
\end{equation}
and $\ov\lambda$ satisfies conditions \eqref{nez}. Then
\begin{equation}\label{Ep=q}
E(p,p,r)=\biggl(\lambda_0\delta^p+\frac rp\sum_{j=1}^n\lambda_j\biggr)^{1/p},
\end{equation}
and the method
\begin{equation}\label{met11}
\wm(y)(t)=\begin{cases}\alpha(t)\psi(t)y(t),&t\in T_0,\\
0,&t\in T\setminus T_0,\end{cases}\end{equation}
where
\begin{equation*}
\alpha(t)=\begin{cases}\min\left\{1,\lambda_0|\psi(t)|^{-p}\right\},&t\in T_0,\ \psi(t)\ne0,\\
0,&\mbox{othervise},\end{cases}
\end{equation*}
is optimal.
\end{theorem}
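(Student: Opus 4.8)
The plan is to follow the two-step scheme of the proofs of Theorems~\ref{T1} and~\ref{T2}, with the calculus arguments adapted to the degenerate case $q=p$.

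\textbf{Lower estimate.} By \eqref{nez} the function $\wx\cd$ from \eqref{xT3} lies in $W$ and satisfies $\|\wx\cd\|_{\lpn}\le\delta$, hence is admissible for the extremal problem on the right-hand side of \eqref{lb}; Lemma~\ref{L1} then gives $E(p,p,r)\ge\bigl(\iT|\psi(t)|^p\wx^p(t)\,d\mu(t)\bigr)^{1/p}$. I would first record the pointwise identity
\[
|\psi(t)|^p\wx^p(t)=\lambda_0\wx^p(t)\chi_0(t)+\frac rp\,\sigma_r(t)\wx^r(t)\quad\text{for almost all }t\in T,
\]
which on $T_0$ with $|\psi(t)|^p>\lambda_0$ follows by multiplying $r\sigma_r(t)\wx^{r-p}(t)=p(|\psi(t)|^p-\lambda_0)$ by $\wx^p(t)$, is trivial on $T_0$ with $|\psi(t)|^p\le\lambda_0$ because then $\wx(t)=0$, and on $T\setminus T_0$ follows by multiplying $r\sigma_r(t)\wx^{r-p}(t)=p|\psi(t)|^p$ by $\wx^p(t)$ (using $\chi_0(t)=0$). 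Integrating over $T$ and invoking $\lambda_0>0$ together with the complementary slackness relations in \eqref{nez} (which force $\iTn\wx^p(t)\,d\mu(t)=\delta^p$ and $\lambda_j\iT|\varphi_j(t)|^r\wx^r(t)\,d\mu(t)=\lambda_j$ for each $j$) gives $\iT|\psi(t)|^p\wx^p(t)\,d\mu(t)=\lambda_0\delta^p+\frac rp\sum_{j=1}^n\lambda_j$, which is the lower bound asserted in \eqref{Ep=q}.

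\textbf{Upper estimate.} To estimate $e(p,p,r,\wm)$ for the method \eqref{met11}, I would proceed as in the proof of Theorem~\ref{T2}: set $z\cd=x\cd-y\cd$ on $T_0$ (and $0$ on $T\setminus T_0$), rewrite the error integrand on $T_0$ through $x-\alpha y=(1-\alpha)x+\alpha z$, and pass to absolute values---legitimate since $0\le\alpha(t)\le1$---so that $e^p(p,p,r,\wm)$ does not exceed the value of
\begin{multline*}
\iTn|\psi(t)|^p\bigl((1-\alpha(t))u(t)+\alpha(t)v(t)\bigr)^p\,d\mu(t)+\int_{T\setminus T_0}|\psi(t)|^pu^p(t)\,d\mu(t)\to\max,\\
\iTn v^p(t)\,d\mu(t)\le\delta^p,\quad\iT|\varphi_j(t)|^ru^r(t)\,d\mu(t)\le1,\ j=1,\ldots,n,\\
u(t)\ge0,\ v(t)\ge0\quad\text{for almost all }t\in T.
\end{multline*}
Its Lagrange function is $\LL(u\cd,v\cd,\ov\lambda)=\iT L(t,u(t),v(t),\ov\lambda)\,d\mu(t)$ with $L(t,u,v,\ov\lambda)=-|\psi(t)|^p((1-\alpha(t))u+\alpha(t)v)^p+\lambda_0v^p\chi_0(t)+\sigma_r(t)u^r$, and the crux is the pointwise claim $\min_{u,v\ge0}L(t,u,v,\ov\lambda)=L(t,\wx(t),\wx(t),\ov\lambda)$ for almost all $t$. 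I would split into cases. On $T\setminus T_0$ the function equals $-|\psi(t)|^pu^p+\sigma_r(t)u^r$ (independent of $v$), which, since $\sigma_r(t)>0$ and $r>p$, attains its unique minimum over $u\ge0$ at $u=(pr^{-1}\sigma_r^{-1}(t)|\psi(t)|^p)^{1/(r-p)}=\wx(t)$. On $T_0$ with $|\psi(t)|^p\le\lambda_0$ one has $\alpha(t)=1$ (or $\alpha(t)=0$ when $\psi(t)=0$), so $L$ is a sum of nonnegative terms and is minimized at $u=v=0=\wx(t)$. On $T_0$ with $|\psi(t)|^p>\lambda_0$ one has $\alpha(t)=\lambda_0|\psi(t)|^{-p}\in(0,1)$, and the cancellation $|\psi(t)|^p\alpha(t)=\lambda_0$ makes $\partial L/\partial v$ equal to $p\lambda_0\bigl(v^{p-1}-((1-\alpha(t))u+\alpha(t)v)^{p-1}\bigr)$, so the function $v\mapsto L(t,u,v,\ov\lambda)$ on $[0,\infty)$ is minimized at $v=u$, after which $u\mapsto L(t,u,u,\ov\lambda)=(\lambda_0-|\psi(t)|^p)u^p+\sigma_r(t)u^r$ is minimized over $u\ge0$ at $u=\wx(t)$. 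Integrating, $\LL(\wx\cd,\wx\cd,\ov\lambda)\le\LL(u\cd,v\cd,\ov\lambda)$, and since \eqref{nez} supplies condition~(b) of Lemma~\ref{L2}, that lemma shows $u\cd=v\cd=\wx\cd$ is extremal for the displayed problem. Hence $e^p(p,p,r,\wm)\le\iT|\psi(t)|^p\wx^p(t)\,d\mu(t)=\lambda_0\delta^p+\frac rp\sum_{j=1}^n\lambda_j\le E^p(p,p,r)$, so \eqref{met11} is optimal and \eqref{Ep=q} holds.

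\textbf{Main obstacle.} The only genuinely nonroutine step is the pointwise minimization of $L$ on $T_0$ when $|\psi(t)|^p>\lambda_0$: Lemma~\ref{L3} is not applicable because one of its exponents ($p$) now coincides with $q$, so the reduction $v=u$ must be extracted directly from the identity $|\psi(t)|^p\alpha(t)=\lambda_0$, and one has to verify that the remaining one-variable function $(\lambda_0-|\psi(t)|^p)u^p+\sigma_r(t)u^r$---which is negative for small $u>0$, has a single interior critical point, and tends to $+\infty$---is indeed globally minimized at $u=\wx(t)$. Everything else is bookkeeping parallel to the proofs of Theorems~\ref{T1} and~\ref{T2}.
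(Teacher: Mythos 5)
Your proposal is correct and follows essentially the same route as the paper: the lower bound via Lemma~\ref{L1} with the admissible function \eqref{xT3} and the pointwise identity $|\psi|^p\wx^p=\lambda_0\wx^p\chi_0+\frac rp\sigma_r\wx^r$, and the upper bound via the Lagrange function of the relaxed nonnegative problem, minimizing first in the noise variable (using $\alpha(t)|\psi(t)|^p=\lambda_0$ to reduce to the diagonal) and then the one-variable function $(\lambda_0-|\psi(t)|^p)u^p+\sigma_r(t)u^r$, concluding with Lemma~\ref{L2}. The only difference is cosmetic (your $u,v$ labeling is swapped relative to the paper, and you spell out the one-variable minimization that the paper leaves as "easy to check").
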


\begin{proof}
1. Lower estimate. By the definition of $\wx\cd$ we have
\begin{equation*}
|\psi(t)|^p\wx^p(t)=\lambda_0\wx^p(t)\chi_0(t)+\frac rp\sigma_r(t)\wx^r(t).
\end{equation*}
Using the similar arguments as in the proof of Theorem \ref{T1} we obtain
\begin{equation*}
E(p,p,r)\ge\biggl(\iT|\psi(t)|^p\wx^p(t)\,d\mu(t)\biggr)^{1/p}=\biggl(\lambda_0\delta^p+\frac rp\sum_{j=1}^n\lambda_j\biggr)^{1/p}.
\end{equation*}

2. Upper estimate. To estimate the error of method \eqref{met11} we need to find the value of the following extremal problem:
\begin{multline*}
\iTn|\psi(t)|^p|x(t)-\alpha(t)y(t)|^p\,d\mu(t)+\int_{T\setminus T_0}|\psi(t)x(t)|^p\,d\mu(t)\to\max,\\
\iTn|x(t)-y(t)|^p\,d\mu(t)\le\delta^p,\quad\iT|\varphi_j(t)x(t)|^r\,d\mu(t)\le1,\ j=1,\ldots,n.
\end{multline*}
Putting $z\cd=x\cd-y\cd$ this problem may be rewritten in the form
\begin{multline*}
\iTn|\psi(t)|^p|(1-\alpha(t))x(t)+\alpha(t)z(t)|^p\,d\mu(t)
+\int_{T\setminus T_0}|\psi(t)x(t)|^p\,d\mu(t)\to\max,\\
\iTn|z(t)|^p\,d\mu(t)\le\delta^p,\quad\iT|\varphi_j(t)x(t)|^r\,d\mu(t)\le1,\ j=1,\ldots,n.
\end{multline*}
The value of this problem evidently coincides with the value of the problem
\begin{multline}\label{uv11}
\iT|\psi(t)|^p((1-\alpha(t))v(t)+\alpha(t)u(t))^p\,d\mu(t)\to\max,\\
\iTn u^p(t)\,d\mu(t)\le\delta^p,\quad\iT|\varphi_j(t)|^rv^r(t)\,d\mu(t)\le1,\ j=1,\ldots,n,\\ u(t),v(t)\ge0,\ \mbox{for almost all }t\in T.
\end{multline}

The Lagrange function of \eqref{uv11} has the form
\begin{equation*}
\LL(u\cd,v\cd,\ov\lambda)=\iT L(u(t),v(t),\ov\lambda)\,d\mu(t),
\end{equation*}
where
\begin{equation*}
L(u,v,\ov\lambda)=\begin{cases}-|\psi(t)|^p((1-\alpha(t))v+\alpha(t)u)^p+\lambda_0u^p+
\sigma_r(t)v^r,&t\in T_0,\\
-|\psi(t)|^pv^p+\sigma_r(t)v^r,&t\in T\setminus T_0.\end{cases}
\end{equation*}
For $t\in T_0$ and $|\psi(t)|^p>\lambda_0$ we have
\begin{equation*}
\frac{\partial L}{\partial u}=p\lambda_0(u^{p-1}-((1-\alpha(t))v+\alpha(t)u)^{p-1}).
\end{equation*}
Consequently, in this case for any $v>0$ the function $L(u,v,\ov\lambda)$, $v\in(0,+\infty)$, reaches a minimum at $v=u$. If $t\in T_0$, $0<|\psi(t)|^p\le\lambda_0$, then $\alpha(t)=1$ and $L(u,v,\ov\lambda)\ge0$. If $t\in T_0$ and $\psi(t)=0$, then again $L(u,v,\ov\lambda)\ge0$. Set
$T_1=\{t\in T_0:|\psi(t)|^p>\lambda_0\}$. Then for all $u(t),v(t)\ge0$ we have
\begin{equation*}
\LL(u\cd,v\cd,\ov\lambda)\ge\iTnn L(v\cd,v\cd,\ov\lambda)\,d\mu(t)+\int_{T\setminus T_0}
L(v\cd,v\cd,\ov\lambda)\,d\mu(t).
\end{equation*}
It is easy to check that for all $v(t)\ge0$
\begin{equation*}
L(v\cd,v\cd,\ov\lambda)\ge L(\wx\cd,\wx\cd,\ov\lambda).
\end{equation*}
Therefore,
\begin{equation*}
\LL(u\cd,v\cd,\ov\lambda)\ge\int_{T_1\cup(T\setminus T_0)}L(\wx\cd,\wx\cd,\ov\lambda)\,d\mu(t)=
\LL(\wx\cd,\wx\cd,\ov\lambda).
\end{equation*}
Taking into account \eqref{nez} we obtain by Lemma \ref{L2} that $u\cd=v\cd=\wx\cd$ are extremal functions in \eqref{uv11}. Consequently,
\begin{equation*}
e^p(p,p,r,\wm)=\iT|\psi(t)\wx(t)|^q\,d\mu(t)=\lambda_0\delta^p+\frac rp\sum_{j=1}^n\lambda_j\le E^p(p,p,r).
\end{equation*}
It means that the method $\wm$ is optimal and the optimal recovery error is as stated.
\end{proof}

Note that if conditions of Theorems \ref{T1}, \ref{T2}, and \ref{TT3} are fulfilled, then we have
\begin{equation}\label{Edv}
E(p,q,r)=\sup_{\substack{\|x\cd\|_{\lpn}\le\delta\\\|\varphi_j\cd x\cd\|_{\lr}\le1,\ j=1,\ldots,n}}\|\psi\cd x\cd\|_{\lqq}.
\end{equation}

\section{The case of homogenous weight functions}
\label{hom}

Let $T$ be a cone in a linear space, $T_0=T$, $\mu\cd$ be a homogenous measure of degree $d$, $|\psi\cd|$ be homogenous function of degree $\eta$, $|\varphi_j\cd|$, $j=1,\ldots,n$, be homogenous functions of degrees $\nu$, $\psi(t)\ne0$ and $\sum_{j=1}^n|\varphi_j(t)|\ne0$ for almost all $t\in T$. Let assume, again, that $1\le p<q,r<\infty$. For $k\in[0,1)$ the function $k^{\frac1{p-q}}(1-k)^{-\frac1{r-q}}$ increases monotonically from $0$ to $+\infty$. Consequently, there exists $k\cd$ such that for almost all $t\in T$
\begin{equation}\label{kz}
\frac{k^{\frac1{p-q}}(t)}{(1-k(t))^{\frac1{r-q}}}=s_r^{-\frac1{r-q}}(t)|\psi(t)|
^{\frac{q(p-r)}{(p-q)(r-q)}},\quad s_r(t)=\sum_{j=1}^n|\varphi_j(t)|^r.
\end{equation}
Set
\begin{equation*}
k(t)=\begin{cases}\left(1-|\psi(t)|^{-q}s_q(t)\right)_+,&(p,q,r)\in P_1,\\\
\min\left\{1,|\psi(t)|^{-p}\right\}.&(p,q,r)\in P_2
\end{cases}
\end{equation*}

\begin{theorem}\label{S22}
Let $(p,q,r)\in P\cup P_1\cup P_2$ and $\nu+d(1/r-1/p)\ne0$.
Assume that for $(p,q,r)\in P\cup P_1$
\begin{align*}
I_1&=\iT|\psi(t)|^{\frac{qp}{p-q}}k^{\frac p{p-q}}(t)\,d\mu(t)<\infty,\\
I_{j+1}&=\iT|\psi(t)|^{\frac{qr}{p-q}}|\varphi_j(t)|^rk^{\frac r{p-q}}(z)\,d\mu(t)<\infty,\ j=1,\ldots,n,
\end{align*}
and for $(p,q,r)\in P_2$
\begin{align*}
I_1&=\iT\left(s_r^{-1}(t)(|\psi(t)|^p-1)_+\right)^{\frac p{r-p}}\,d\mu(t)<\infty,\\
I_{j+1}&=\iT|\varphi_j(t)|^r\left(s_r^{-1}(t)(|\psi(t)|^p-1)_+
\right)^{\frac r{r-p}}\,d\mu(t)<\infty,\ j=1,\ldots,n.
\end{align*}
Moreover, assume that $I_2=\ldots=I_{n+1}$. Then
\begin{equation}\label{EEpqr}
E(p,q,r)=\delta^\gamma
I_1^{-\gamma/p}I_2^{-(1-\gamma)/r}(I_1+nI_2)^{1/q},
\end{equation}
where
\begin{equation}\label{gam}
\gamma=\frac{\nu-\eta-d(1/q-1/r)}{\nu+d(1/r-1/p)}.
\end{equation}
The method
\begin{equation*}
\wm(y)(t)=k(\xi t)\psi(t)y(t),
\end{equation*}
where
\begin{equation}\label{yy}
\xi=\left(\delta I_1^{-1/p}I_2^{1/r}\right)^{\frac1{\nu+d(1/r-1/p)}},
\end{equation}
is optimal.
\end{theorem}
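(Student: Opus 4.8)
The plan is to deduce each of the three cases of Theorem~\ref{S22} from the matching general theorem of Section~\ref{res}: Theorem~\ref{T1} when $(p,q,r)\in P$, Theorem~\ref{T2} when $(p,q,r)\in P_1$, and Theorem~\ref{TT3} when $(p,q,r)\in P_2$. In each case one has to supply admissible Lagrange multipliers. Since the functions $\varphi_j\cd$ enter the constraints only through $s_r\cd$ and the extremal $\wx\cd$ produced by those theorems depends on the $\varphi_j\cd$ only through $\sigma_r\cd$, it is natural to take $\lambda_1=\ldots=\lambda_n=\lambda$, so that $\sigma_r\cd=\lambda s_r\cd$; the hypothesis $I_2=\ldots=I_{n+1}$ is precisely what makes this choice consistent with all $n$ of the equalities $\iT|\varphi_j\cd|^r\wx^r\,d\mu=1$ in \eqref{nez} holding simultaneously. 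Thus the problem is reduced to finding two numbers $\lambda_0,\lambda>0$ together with the dilation parameter $\xi>0$ for which the hypotheses of the relevant theorem are met, and then reading off $E(p,q,r)$ from the formula that theorem provides.

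Take $(p,q,r)\in P$. By Lemma~\ref{L3}, the solution of \eqref{ko1} with $\sigma_r\cd=\lambda s_r\cd$ is $\wx(t)=(q/(p\lambda_0))^{1/(p-q)}(\alpha(t))^{1/(p-q)}|\psi(t)|^{q/(p-q)}$, where the coefficient $\alpha(t)=q^{-1}p\lambda_0\wx^{p-q}(t)|\psi(t)|^{-q}=1-q^{-1}r\lambda s_r(t)\wx^{r-q}(t)|\psi(t)|^{-q}$ is exactly the one appearing in the optimal method \eqref{met}. Eliminating $\wx$ between the two expressions for $\alpha$ shows that $(\alpha(t))^{1/(p-q)}/(1-\alpha(t))^{1/(r-q)}$ equals the right-hand side of \eqref{kz} multiplied by a constant depending only on $\lambda_0,\lambda,q$; using the homogeneity of $\psi\cd$, of the $\varphi_j\cd$, and of $\mu\cd$, this constant can be absorbed into a dilation of the argument, so that $\alpha(t)=k(\xi t)$ for a suitable $\xi$, which gives precisely the method in the statement.

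It remains to enforce the equalities in \eqref{nez}. Performing the change of variables $t\mapsto\xi^{-1}t$ — this is where the homogeneity degrees $d$, $\eta$, $\nu$ enter — turns $\iT\wx^p\,d\mu$ into a power of $\xi$ times $(q/(p\lambda_0))^{p/(p-q)}I_1$ and $\iT|\varphi_j\cd|^r\wx^r\,d\mu$ into a power of $\xi$ times $(q/(p\lambda_0))^{r/(p-q)}I_2$, so that \eqref{nez} becomes a system of two equations for the two unknowns $q/(p\lambda_0)$ and $\xi$. Solving it gives $\xi=(\delta I_1^{-1/p}I_2^{1/r})^{1/(\nu+d(1/r-1/p))}$ — this is the step at which $\nu+d(1/r-1/p)\ne0$ is used — after which $\lambda$ is recovered from the relation that ties the two forms of $\alpha$ together. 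Plugging $\lambda_0$ and $\lambda$ into \eqref{Epqr} with $\sum_{j=1}^n\lambda_j=n\lambda$ and simplifying the exponents, for which one uses $1-\gamma=(\eta+d(1/q-1/p))/(\nu+d(1/r-1/p))$, yields \eqref{EEpqr}.

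The cases $(p,q,r)\in P_1$ and $(p,q,r)\in P_2$ go the same way, now with the explicitly given $k\cd$. For $P_1$ one uses Theorem~\ref{T2}: the coefficient in \eqref{met1} is $(1-\lambda|\psi(t)|^{-q}s_q(t))_+$, which by homogeneity equals $k(\xi t)$ once $\lambda=\xi^{q(\nu-\eta)}$, and the two equalities in \eqref{nez} then determine $\lambda_0$ and $\xi$; the requirement that $|\psi|^q-\sigma_q\le0$ a.e.\ on $T\setminus T_0$ is empty because $T_0=T$. For $P_2$ one uses Theorem~\ref{TT3}: the coefficient in \eqref{met11} is $\min\{1,\lambda_0|\psi(t)|^{-p}\}$ on $T_0=T$, which equals $k(\xi t)$ once $\lambda_0=\xi^{-p\eta}$, and \eqref{nez} then determines $\lambda$ and $\xi$; the factor $(|\psi|^p-1)_+$ in the integrands defining $I_1,\ldots,I_{n+1}$ in this case is simply $\xi^{p\eta}(|\psi(\xi t)|^p-\lambda_0)_+$ after the change of variables. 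In all three cases the optimal recovery error turns out to be the quantity furnished by the general theorem, and that quantity reduces to \eqref{EEpqr} after the same routine rearrangement of exponents. The only real difficulty is bookkeeping: keeping track of the interacting homogeneity degrees through the change of variables, and checking that the nonlinear system coming from \eqref{nez} is consistent and has the stated solution — the two hypotheses $I_2=\ldots=I_{n+1}$ and $\nu+d(1/r-1/p)\ne0$ being exactly what guarantee this.
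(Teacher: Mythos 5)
Your proposal follows essentially the same route as the paper: reduce each of the three cases to Theorem~\ref{T1}, \ref{T2}, or \ref{TT3} with equal multipliers $\lambda_1=\ldots=\lambda_n=\lambda$ (consistency being exactly what $I_2=\ldots=I_{n+1}$ provides), use homogeneity and the dilation $t\mapsto\xi t$ to turn the equality constraints \eqref{nez} into a solvable system for $\lambda_0$, $\lambda$, $\xi$, identify the method coefficient with $k(\xi t)$, and read off \eqref{EEpqr} from the error formula of the general theorem — only the order of presentation (solving \eqref{ko1} via Lemma~\ref{L3} and then matching the coefficient, versus the paper's direct ansatz $\wx(t)=(q|\psi(t)|^q/(p\lambda_0))^{1/(p-q)}k^{1/(p-q)}(\xi t)$) differs. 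The lone blemish is a harmless direction-of-dilation slip in the $P_2$ remark, where $(|\psi(t)|^p-1)_+=\xi^{p\eta}\bigl(|\psi(\xi^{-1}t)|^p-\lambda_0\bigr)_+$ rather than with $\xi t$; the substance is correct.
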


\begin{proof}
1. Let $(p,q,r)\in P$. Put
\begin{equation*}
\wx(t)=\left(\frac{q|\psi(t)|^q}{p\lambda_0}\right)^{\frac1{p-q}}k^{\frac1{p-q}}(\xi t),
\end{equation*}
where $\lambda_0$ will be specified later. We have
\begin{equation}\label{eq11}
p\lambda_0\wx^{p-q}(t)=q|\psi(t)|^qk(\xi t)
\end{equation}
and
\begin{equation*}
rc_r(t)\wx^{r-q}(t)=rc_r(t)\left(\frac{q|\psi(t)|^q}{p\lambda_0}\right)^{\frac{r-q}{p-q}}k^
{\frac{r-q}{p-q}}(\xi t).
\end{equation*}
Since $|\psi\cd|$ and $|\varphi_j\cd|$, $j=1,\ldots,n$, are homogenous it follows by \eqref{kz} that
\begin{equation*}
k^{\frac{r-q}{p-q}}(\xi t)=\frac{|\psi(\xi t)|^{\frac{q(p-r)}{p-q}}
}{c_r(\xi t)}(1-k(\xi t))
=\xi^{\eta\frac{q(p-r)}{p-q}-\nu r}\frac{|\psi(t)|^{\frac{q(p-r)}{p-q}}
}{c_r(t)}(1-k(\xi t)).
\end{equation*}
Thus,
\begin{equation*}
rc_r(t)\wx^{r-q}(t)=r\left(\frac q{p\lambda_0}\right)^{\frac{r-q}{p-q}}\xi^{\eta\frac{q(p-r)}{p-q}-\nu r}|\psi(t)|^q(1-k(\xi t)).
\end{equation*}
Put
\begin{equation}\label{l2}
\lambda=\frac qr\left(\frac q{p\lambda_0}\right)^{-\frac{r-q}{p-q}}\xi^{-\eta\frac{q(p-r)}{p-q}+\nu r}.
\end{equation}
Then
\begin{equation}\label{eq12}
r\lambda c_r(t)\wx^{r-q}(t)=q|\psi(t)|^q(1-k(\xi t)).
\end{equation}
Taking the sum of \eqref{eq11} and \eqref{eq12}, we obtain
\begin{equation*}
p\lambda_0\wx^{p-q}(t)+r\lambda c_r(t)\wx^{r-q}(t)=q|\psi(t)|^q.
\end{equation*}
It means that $\wx\cd$ satisfies \eqref{ko1} for $\lambda_1=\ldots=\lambda_n=\lambda$.

Now we show that for
\begin{equation}\label{l1}
\lambda_0=\frac qpI_1^{\frac{p-q}p}\xi^{-\eta q-d\frac{p-q}p}\delta^{q-p}
\end{equation}
the equalities
\begin{equation*}
\iT\wx^p(t)\,d\mu(t)=\delta^p,\quad\iT|\varphi_j(t)|^r\wx^r(t)\,d\mu(t)
=1,\ j=1,\ldots,n,
\end{equation*}
hold. In view of the definition of $\wx\cd$ we need to check that
\begin{align*}
\iT\left(\frac{q|\psi(t)|^q}{p\lambda_0}\right)^{\frac p{p-q}}k^{\frac p{p-q}}(\xi t)\,d\mu(t)&
=\delta^p,\\
\iT|\varphi_j(t)|^r\left(\frac{q|\psi(t)|^q}{p\lambda_0}\right)^{\frac r{p-q}}k^{\frac r{p-q}}(\xi t)\,d\mu(t)&
=1,\ j=1,\ldots,n.
\end{align*}
Changing $z=\xi t$ and taking into account that functions $|\psi\cd|$, $|\varphi_j\cd|$, $j=1,\ldots,n$, with the measure $\mu\cd$ are homogenous, we obtain
$$\left(\frac q{p\lambda_0}\right)^{\frac p{p-q}}I_1
=\delta^p\xi^{\frac{\eta qp}{p-q}+d},\quad
\left(\frac q{p\lambda_0}\right)^{\frac r{p-q}}I_{j+1}
=\xi^{\frac{\eta qr}{p-q}+\nu r+d},\ j=1,\ldots,n.$$
The validity of these equalities immediately follows from the definitions of $\lambda_0$ and $\xi$.

It follows by Theorem~\ref{T1}, \eqref{l1}, \eqref{l2}, and \eqref{yy} that
\begin{multline*}
E^q(p,q,r)=\frac{p\lambda_0\delta^p+nr\lambda}q=I_1^{\frac{p-q}p}
\xi^{-\eta q-d\frac{p-q}p}\delta^q\\+n\left(\frac{p\lambda_1}q\right)^{\frac{r-q}{p-q}}
\xi^{\nu r-\eta\frac{q(p-r)}{p-q}}
=\delta^{q\gamma}I_1^{-q\gamma/p}I_2^{-q(1-\gamma)/r}(I_1+nI_2).
\end{multline*}
Moreover, the same theorem states that the method
\begin{equation*}
\wm(y)(t)=q^{-1}p\lambda_0\wx^{p-q}(t)|\psi(t)|^{-q}\psi(t)y(t)=
k(\xi t)\psi(t)y(t)
\end{equation*}
is optimal.

2. Let $(p,q,r)\in P_1$. We use Theorem~\ref{T2}. Consider the function $\wx\cd$ defined by \eqref{xT2} with $\lambda_1=\ldots=\lambda_n=\lambda$. Let us find $\lambda_0$ and $\lambda$ from the conditions
\begin{equation*}
\iT\wx^p(t)\,d\mu(t)=\delta^p,\quad\iT|\varphi_j(t)|^q\wx^q(t)\,d\mu(t)=1,\ j=1,\ldots,n.
\end{equation*}
Then we obtain
\begin{gather*}
\left(\frac q{p\lambda_0}\right)^{\frac p{p-q}}\iT\left(|\psi(t)|^q-\lambda s_q(t)\right)_+^{\frac p{p-q}}\,d\mu(t)=\delta^p,\\
\left(\frac q{p\lambda_0}\right)^{\frac q{p-q}}\iT|\varphi_j(t)|^q\left(|\psi(t)|^q-\lambda s_q(t)\right)_+^{\frac q{p-q}}\,d\mu(t)=1,\ j=1,\ldots,n.
\end{gather*}
Put $\lambda=a^{(\eta-\nu)q}$, $a>0$. Changing $t=az$, we obtain
$$\left(\frac q{p\lambda_0}\right)^{\frac p{p-q}}a^{d+\frac{pq\eta}{p-q}}I_1=\delta^p,\quad
\left(\frac q{p\lambda_0}\right)^{\frac q{p-q}}a^{d+q\nu+\frac{q^2\eta}{p-q}}I_{j+1}=1,\ j=1,\ldots,n.$$
It is easy to check that these equalities are fulfilled for
\begin{equation*}
a=(I_1^{1/p}I_2^{-1/q}\delta^{-1})^{\frac1{\nu+d(1/q-1/p)}},\quad
\lambda_0=\frac qpI_1I_2^{-1}\delta^{-p}(I_1^{-q/p}I_2\delta^q)^{\frac{\eta-\nu}{\nu+d(1/q-1/p)}}.
\end{equation*}
Substituting these values in \eqref{Eq=r} and \eqref{met1} we obtain the statement of the theorem in the case under consideration.

3. Let $(p,q,r)\in P_2$. Here we use Theorem~\ref{TT3}. Put $\lambda_1=\ldots=\lambda_n=\lambda$
in the definition of $\wx\cd$ (see \eqref{xT3}). We find $\lambda_0$ and $\lambda$ from the conditions
\begin{equation*}
\iT\wx^p(t)\,d\mu(t)=\delta^p,\quad\iT|\varphi_j(t)|^r\wx^r(t)\,d\mu(t)=1,\ j=1,\ldots,n.
\end{equation*}
We have
\begin{gather*}
\left(\frac p{r\lambda}\right)^{\frac p{r-p}}\iT\left(s_r^{-1}(t)(|\psi(t)|^p-\lambda_0)_+\right)^{\frac p{r-p}}\,d\mu(t)=\delta^p,\\
\left(\frac p{r\lambda}\right)^{\frac r{r-p}}\iT|\varphi_j(t)|^r\left(s_r^{-1}(t)(|\psi(t)|^p-\lambda_0)_+\right)^{\frac r{r-p}}\,d\mu(t)=1,\ j=1,\ldots,n.
\end{gather*}
Put $\lambda_0=a^{\eta p}$, $a>0$. Changing $t=az$, we obtain
\begin{gather*}
\left(\frac p{r\lambda}\right)^{\frac p{r-p}}a^{d+\frac{p^2\eta}{r-p}-\frac{pr\nu}{r-p}}I_1=\delta^p,\\
\left(\frac p{r\lambda}\right)^{\frac r{r-p}}a^{d+r\nu+\frac{pr\eta}{r-p}-\frac{r^2\nu}{r-p}}I_{j+1}=1,\ j=1,\ldots,n.
\end{gather*}
These equalities are valid for
\begin{align*}
a&=(I_1^{1/p}I_2^{-1/r}\delta^{-1})^{\frac1{\nu+d(1/r-1/p)}},\\
\lambda&=\frac prI_1^{r/p-1}\delta^{p-r}(I_1^{r/p}I_2^{-1}\delta^{-r})^{\frac{p\eta /r-\nu-d(1/r-1/p)}{\nu+d(1/r-1/p)}}.
\end{align*}
It remains to substitute these values into \eqref{Ep=q} and \eqref{met11}.
\end{proof}

\begin{corollary}\label{Sdva}
Assume that conditions of Theorem~\ref{S22} hold. Then for all $x\cd\ne0$ such that $x\cd\in\lp$ and $\varphi_j\cd x\cd\in\lr$, $j=1,\ldots,n$, the sharp inequality
\begin{equation}\label{ncar}
\|\psi\cd x\cd\|_{\lqq}\le C\|x\cd\|_{\lp}^\gamma\left(\max_{1\le j\le n}\|\varphi_j\cd x\cd\|_{\lr}\right)^{1-\gamma}
\end{equation}
holds, where
\begin{equation*}
C=I_1^{-\gamma/p}I_2^{-(1-\gamma)/r}(I_1+nI_2)^{1/q}.
\end{equation*}
\end{corollary}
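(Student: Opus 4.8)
The plan is to derive the sharp Carlson-type inequality \eqref{ncar} directly from the sharp constant in the optimal recovery error \eqref{EEpqr} of Theorem~\ref{S22}, exploiting the remark \eqref{Edv} which identifies $E(p,q,r)$ with the value of the extremal problem
\begin{equation*}
\sup\bigl\{\|\psi\cd x\cd\|_{\lqq}:\|x\cd\|_{\lp}\le\delta,\ \|\varphi_j\cd x\cd\|_{\lr}\le1,\ j=1,\ldots,n\bigr\}.
\end{equation*}
First I would observe that, fixing $\delta>0$, \eqref{Edv} together with \eqref{EEpqr} gives, for every admissible $x\cd$,
\begin{equation*}
\|\psi\cd x\cd\|_{\lqq}\le\delta^\gamma I_1^{-\gamma/p}I_2^{-(1-\gamma)/r}(I_1+nI_2)^{1/q}=C\,\delta^\gamma,
\end{equation*}
and this bound is attained. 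So the whole argument reduces to scaling: given an arbitrary nonzero $x\cd$ with $x\cd\in\lp$ and $\varphi_j\cd x\cd\in\lr$, one normalizes and rescales $x\cd$ to make it admissible for the extremal problem with the appropriate $\delta$.

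The key steps, in order, are as follows. Set $A=\|x\cd\|_{\lp}$, $B_j=\|\varphi_j\cd x\cd\|_{\lr}$, and $B=\max_{1\le j\le n}B_j$; both are positive and finite since $x\cd\ne0$ (if some $B_j=0$ one still has $B>0$ as long as not all $\varphi_j x$ vanish, which holds by the standing assumption $\sum_j|\varphi_j(t)|\ne0$ a.e.\ together with $x\cd\ne0$ on a set of positive measure; a brief remark covers the degenerate possibility). Put $\tilde x\cd=x\cd/B$, so that $\|\varphi_j\cd\tilde x\cd\|_{\lr}=B_j/B\le1$ for all $j$, and choose $\delta=A/B=\|\tilde x\cd\|_{\lp}$, so that $\tilde x\cd$ is admissible for the extremal problem \eqref{Edv} with this $\delta$. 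Then the displayed bound gives
\begin{equation*}
\|\psi\cd\tilde x\cd\|_{\lqq}\le C\,\delta^\gamma=C\,(A/B)^\gamma,
\end{equation*}
and multiplying through by $B$ and using homogeneity of the norm yields
\begin{equation*}
\|\psi\cd x\cd\|_{\lqq}\le C\,A^\gamma B^{1-\gamma}=C\,\|x\cd\|_{\lp}^\gamma\Bigl(\max_{1\le j\le n}\|\varphi_j\cd x\cd\|_{\lr}\Bigr)^{1-\gamma},
\end{equation*}
which is \eqref{ncar}. Sharpness follows because equality is attained: the extremal function $\wx\cd$ constructed in the proof of Theorem~\ref{S22} (with the specific $\lambda_0$, $\lambda$, $\xi$ chosen there so that the constraints in \eqref{nez} hold with equality) realizes $\|\psi\cd\wx\cd\|_{\lqq}=C\,\delta^\gamma$ while $\|\wx\cd\|_{\lp}=\delta$ and $\|\varphi_j\cd\wx\cd\|_{\lr}=1$; rescaling this function back shows $C$ cannot be decreased.

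The only genuine subtlety — and the step I expect to require the most care — is checking that the constant $C$ in \eqref{ncar} is independent of the auxiliary parameter $\delta$ used in the recovery problem. A priori $I_1$ and $I_2$ depend only on $p,q,r,\eta,\nu,d$ and the weights, not on $\delta$, so $C=I_1^{-\gamma/p}I_2^{-(1-\gamma)/r}(I_1+nI_2)^{1/q}$ is a fixed number; the $\delta$-dependence of $E(p,q,r)$ is entirely carried by the explicit factor $\delta^\gamma$ in \eqref{EEpqr}. This is exactly why the scaling argument closes: the exponent $\gamma$ in \eqref{gam} is precisely the homogeneity degree that makes $\delta^\gamma$ absorb the rescaling $x\cd\mapsto x\cd/B$, $\delta\mapsto A/B$. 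One should also note that the three parameter regimes $P$, $P_1$, $P_2$ need no separate treatment here, since Theorem~\ref{S22} already delivers the same closed form for $E(p,q,r)$ in all of them, and \eqref{Edv} is asserted to hold whenever the hypotheses of Theorems~\ref{T1}, \ref{T2}, or \ref{TT3} are met — which is guaranteed under the hypotheses of Theorem~\ref{S22}.
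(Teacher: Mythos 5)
Your proposal is correct and follows essentially the same route as the paper: normalize $x\cd$ by $\max_j\|\varphi_j\cd x\cd\|_{\lr}$, take $\delta$ to be the $L_p$-norm of the normalized function, and invoke \eqref{Edv} together with \eqref{EEpqr} to get the bound $C\delta^\gamma$. The only cosmetic difference is in the sharpness step, where you appeal to attainment by the extremal function $\wx\cd$ of Theorem~\ref{S22}, while the paper argues by contradiction (a smaller constant would force $E(p,q,r)<C\delta^\gamma$, contradicting \eqref{EEpqr}); both arguments are valid and rest on the same facts.
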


\begin{proof}
Let $x\cd\in\lp$, $\|\varphi_j\cd x\cd\|_{\lr}<\infty$, $j=1,\ldots,n$ and $x\cd\ne0$. Put
\begin{equation*}
A=\max_{1\le j\le n}\|\varphi_j\cd x\cd\|_{\lr}.
\end{equation*}
Consider $\wx\cd=x\cd/A$. Put $\delta=\|\wx\cd\|_{\lpn}$. Then $\|\varphi_j\cd \wx\cd\|_{\lr}\le1$, $j=1,\ldots,n$. In view of \eqref{Edv} and Theorem~\ref{S22} we have
\begin{equation*}
\|\psi\cd\wx\cd\|_{\lqq}\le C\|\wx\cd\|_{\lp}^\gamma.
\end{equation*}
This implies \eqref{ncar}.

If there exists a $\wCC<C$ for which \eqref{ncar} holds, then
\begin{equation*}
E(p,q,r)=\sup_{\substack{\|x\cd\|_{\lp}\le\delta\\\|\varphi_j\cd x\cd\|_{\lr}\le1,\ j=1,\ldots,n}}\|\psi\cd x\cd\|_{\lqq}\le\wCC\delta^\gamma<C\delta^\gamma.
\end{equation*}
This contradicts with \eqref{EEpqr}.
\end{proof}

Let $|w\cd|$, $|w_0\cd|$ be homogenous functions of degrees $\theta$, $\theta_0$, respectively and $|w_j\cd|$, $j=1,\ldots,n$, be homogenous functions of degree $\theta_1$. We assume that $w(t),w_0(t)\ne0$ and $\sum_{j=1}^n|w_j(t)|\ne0$ for almost all $t\in T$.

For $(p,q,r)\in P$ we define $\wk\cd$ by the equality
\begin{equation*}
\frac{\wk^{\frac1{p-q}}(t)}{(1-\wk(t))^{\frac1{r-q}}}=\left|\frac{w_0(t)}{w(t)}\right|
^{\frac p{p-q}}\biggl(\sum_{j=1}^n\left|\frac{w_j(t)}{w(t)}\right|^r\biggr)^{-\frac1{r-q}}.
\end{equation*}
For $(p,q,r)\in P_1$ set
\begin{equation*}
\wk(t)=\biggl(1-|w(t)|^{-q}\sum_{j=1}^n|w_j(t)|^q\biggr)_+.
\end{equation*}
Put
\begin{equation}\label{wg}
\wt=\theta+d/q,\quad\wt_0=\theta_0+d/p,\quad\wt_1=\theta_1+d/r,\quad
\wg=\frac{\wt_1-\wt}{\wt_1-\wt_0}.
\end{equation}

\begin{corollary}\label{Sl3}
Let $(p,q,r)\in P\cup P_1\cup P_2$ and $\wt_0\ne\wt_1$. Assume that for $(p,q,r)\in P\cup P_1$
\begin{align*}
\wII_1&=\iT\left|\frac{w(t)}{w_0(t)}\right|^{\frac{qp}{p-q}}\wk^{\frac p{p-q}}(t)\,d\mu(t)<\infty,\\
\wII_{j+1}&=\iT\frac{|w(t)|^{\frac{qr}{p-q}}}{|w_0(t)|^{\frac{pr}{p-q}}}|w_j(t)|^r\wk^{\frac r{p-q}}(t)\,d\mu(t)<\infty,\ j=1,\ldots,n,
\end{align*}
and for $(p,q,r)\in P_2$
\begin{align*}
\wII_1&=\iT|w_0(t)|^p\left(\frac{(|w(t)|^p-|w_0(t)|^p)_+}{\sum_{k=1}^n|w_k(t)|^r}\right)^{\frac p{r-p}}\,d\mu(t)<\infty,\\
\wII_{j+1}&=\iT|w_j(t)|^r\left(\frac{(|w(t)|^p-|w_0(t)|^p)_+}{\sum_{k=1}^n|w_k(t)|^r}\right)^{\frac r{r-p}}\,d\mu(t)<\infty,\ j=1,\ldots,n.
\end{align*}
Moreover, assume that $\wII_2=\ldots=\wII_{n+1}$. Then for all $x\cd\ne0$ such that $w_0\cd x\cd\in L_p(T,\mu)$ and $w_j\cd x\cd\in L_r(T,\mu)$, $j=1,\ldots,n$, the sharp inequality
\begin{equation}\label{CCC}
\|w\cd x\cd\|_{\lqq}\le\wCC\|w_0\cd x\cd\|_{\lp}^{\wg}\left(\max_{1\le j\le n}\|\omega_j\cd x\cd\|_{\lr}\right)^{1-\wg}
\end{equation}
holds, where
\begin{equation*}
\wCC=\wII_1^{-\wg/p}\wII_2^{-(1-\wg)/r}(\wII_1+n\wII_2)^{1/q}.
\end{equation*}
\end{corollary}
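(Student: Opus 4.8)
The plan is to deduce \eqref{CCC} from the already established sharp inequality \eqref{ncar} of Corollary~\ref{Sdva} by the weighted substitution $\wx\cd=w_0\cd x\cd$. Since $w_0(t)\ne0$ for almost all $t\in T$, this substitution is a bijection between the nonzero $x\cd$ with $w_0\cd x\cd\in\lp$, $w_j\cd x\cd\in\lr$, $j=1,\ldots,n$, and the nonzero $\wx\cd$ with $\wx\cd\in\lp$, $\varphi_j\cd\wx\cd\in\lr$, $j=1,\ldots,n$, where
\begin{equation*}
\psi(t)=\frac{w(t)}{w_0(t)},\qquad\varphi_j(t)=\frac{w_j(t)}{w_0(t)},\quad j=1,\ldots,n.
\end{equation*}
Under it one has $\|w_0\cd x\cd\|_{\lp}=\|\wx\cd\|_{\lp}$, $\|w_j\cd x\cd\|_{\lr}=\|\varphi_j\cd\wx\cd\|_{\lr}$, $\|w\cd x\cd\|_{\lqq}=\|\psi\cd\wx\cd\|_{\lqq}$, so inequality \eqref{CCC} for $x\cd$ is literally inequality \eqref{ncar} for $\wx\cd$. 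It therefore remains to check that, for this choice of $\psi\cd$ and $\varphi_j\cd$, the hypotheses of Theorem~\ref{S22} reduce exactly to those assumed here, with exponent $\wg$ and constant $\wCC$.

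First I would record the elementary facts: $|\psi\cd|$ is homogenous of degree $\eta=\theta-\theta_0$, each $|\varphi_j\cd|$ is homogenous of the common degree $\nu=\theta_1-\theta_0$, and $\psi(t)\ne0$, $\sum_{j=1}^n|\varphi_j(t)|=|w_0(t)|^{-1}\sum_{j=1}^n|w_j(t)|\ne0$ for almost all $t\in T$, so all standing assumptions of Section~\ref{hom} hold. Plugging these degrees into \eqref{gam} and cancelling,
\begin{equation*}
\gamma=\frac{\nu-\eta-d(1/q-1/r)}{\nu+d(1/r-1/p)}=\frac{(\theta_1+d/r)-(\theta+d/q)}{(\theta_1+d/r)-(\theta_0+d/p)}=\frac{\wt_1-\wt}{\wt_1-\wt_0}=\wg,
\end{equation*}
and the nondegeneracy condition $\nu+d(1/r-1/p)\ne0$ of Theorem~\ref{S22} becomes precisely $\wt_0\ne\wt_1$.

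The remaining step is a routine but slightly lengthy computation showing that the auxiliary function $k\cd$ of Section~\ref{hom} coincides with $\wk\cd$ and that $I_1=\wII_1$, $I_{j+1}=\wII_{j+1}$, $j=1,\ldots,n$. For $(p,q,r)\in P$ one substitutes $s_r(t)=|w_0(t)|^{-r}\sum_{j=1}^n|w_j(t)|^r$ and $|\psi(t)|=|w(t)/w_0(t)|$ into the defining relation \eqref{kz}; after collecting powers, the exponent of $|w_0\cd|$ on the right-hand side works out to $p/(p-q)$ and that of $|w\cd|$ to $q(p-r)/((p-q)(r-q))$, which is exactly the relation defining $\wk\cd$. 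For $(p,q,r)\in P_1$ the identity $|\psi|^{-q}\sum_{j=1}^n|\varphi_j|^q=|w|^{-q}\sum_{j=1}^n|w_j|^q$ gives $k\cd=\wk\cd$ at once, and for $(p,q,r)\in P_2$ one has $s_r^{-1}(|\psi|^p-1)_+=|w_0|^{r-p}\frac{(|w|^p-|w_0|^p)_+}{\sum_{k=1}^n|w_k|^r}$. Feeding these expressions into the integrals $I_1$ and $I_{j+1}$ of Theorem~\ref{S22} and collecting the powers of $|w_0\cd|$ reproduces $\wII_1$ and $\wII_{j+1}$ verbatim; in particular the finiteness requirements and the equalities $I_2=\cdots=I_{n+1}$ there become the finiteness of $\wII_1,\ldots,\wII_{n+1}$ and $\wII_2=\cdots=\wII_{n+1}$ assumed here. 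Thus Corollary~\ref{Sdva} applies to $\wx\cd$ and yields \eqref{CCC} with $C=\wCC=\wII_1^{-\wg/p}\wII_2^{-(1-\wg)/r}(\wII_1+n\wII_2)^{1/q}$; sharpness of \eqref{CCC} follows from that of \eqref{ncar} by applying the same substitution in reverse. The only delicate point is the exponent bookkeeping in the $(p,q,r)\in P$ case; all the rest is a direct translation.
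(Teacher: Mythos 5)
Your proposal is correct and follows essentially the same route as the paper: set $\psi=w/w_0$, $\varphi_j=w_j/w_0$, note the homogeneity degrees $\eta=\theta-\theta_0$, $\nu=\theta_1-\theta_0$ so that $\gamma=\wg$ and $\nu+d(1/r-1/p)\ne0$ becomes $\wt_0\ne\wt_1$, and apply Corollary~\ref{Sdva} to $y\cd=w_0\cd x\cd$. Your exponent bookkeeping (identifying $k\cd$ with $\wk\cd$ and $I_j$ with $\wII_j$ in all three cases $P$, $P_1$, $P_2$) checks out; the paper merely leaves these verifications implicit.
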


\begin{proof}
Set
\begin{equation*}
\psi(t)=\frac{w(t)}{w_0(t)},\quad\varphi_j(t)=\frac{w_j(t)}{w_0(t)},\ j=1,\ldots,n.
\end{equation*}
Then $|\psi\cd|$ is a homogenous function of degree $\eta=\theta-\theta_0$ and $|\varphi_j\cd|$, $j=1,\ldots,n$, are homogenous functions of degrees $\nu=\theta_1-\theta_0$. The quantity $\gamma$ which was defined by \eqref{gam} has the following form:
\begin{equation*}
\wg=\frac{\wt_1-\wt}{\wt_1-\wt_0}.
\end{equation*}
It follows by Corollary~\ref{Sdva} that for all $y\cd\ne0$ such that $y\cd\in L_p(T,\mu)$ and $\varphi_j\cd y\cd\in L_r(T,\mu)$, $j=1,\ldots,n$, the sharp inequality
\begin{equation*}
\|\psi\cd y\cd\|_{\lqq}\le\wCC\|y\cd\|_{\lp}^{\wg}\left(\max_{1\le j\le n}\|\varphi_j\cd y\cd\|_{\lr}\right)^{1-\wg}.
\end{equation*}
holds.
Substituting $y\cd=w_0\cd x\cd$, we obtain \eqref{CCC}.
\end{proof}

\section{Homogenous weights in $\mathbb R^d$}\label{rd}

Let $T$ be a cone in $\mathbb R^d$, $d\mu(t)=dt$, $|\psi\cd|$ be homogenous function of degree $\eta$, $|\varphi_j\cd|$, $j=1,\ldots,n$, be homogenous functions of degrees $\nu$, $\psi(t)\ne0$ and $\sum_{j=1}^n|\varphi_j(t)|\ne0$ for almost all $t\in T$. Consider the polar transformation
\begin{equation*}
\arraycolsep=0.08em
\begin{array}{rcl}
t_1&=&\rho\cos\omega_1,\\
t_2&=&\rho\sin\omega_1\cos\omega_2,\\
\hdotsfor{3}\\
t_{d-1}&=&\rho\sin\omega_1\sin\omega_2\ldots\sin\omega_{d-2}\cos\omega_{d-1},\\
t_d&=&\rho\sin\omega_1\sin\omega_2\ldots\sin\omega_{d-2}\sin\omega_{d-1}.
\end{array}
\end{equation*}
Set $\omega=(\omega_1,\ldots,\omega_{d-1})$. For any function $f\cd$ we put
\begin{equation}\label{wtw}
\widetilde f(\omega)=|f(\cos\omega_1,\ldots,\sin\omega_1\sin\omega_2\ldots\sin\omega_{d-2}\sin\omega_{d-1})|.
\end{equation}
Note that if $|f\cd|$ is a homogenous function of degree $\kappa$, then $f(\omega)=\rho^{-\kappa}|f(t)|$. Denote by $\Omega$ the range of $\omega$. Since $T$ is a cone, $\Omega$ does not depend on $\rho$. Put
\begin{equation*}
J(\omega)=\sin^{d-2}\omega_1\sin^{d-3}\omega_2\ldots\sin\omega_{d-2}.
\end{equation*}

Assume that $\gamma\in(0,1)$, where $\gamma$ is defined by \eqref{gam}. Put
\begin{equation}\label{qu}
\frac1{q^*}=\frac1q-\frac\gamma p-\frac{1-\gamma}r.
\end{equation}
It is easy to verify that $q^*>q\ge1$. Moreover,
$$q^*=\frac{pqr(\nu+d(1/r-1/p))}{\nu r(p-q)-\eta q(p-r)}.$$

\begin{theorem}\label{T3}
Let $(p,q,r)\in P\cup P_1\cup P_2$ and $\gamma\in(0,1)$. Assume that
\begin{equation*}
I=\int_\Omega\frac{\wps^{q^*}(\omega)}{\wc_r^{\,q^*(1-\gamma)/r}(\omega)
}J(\omega)\,d\omega<\infty,
\end{equation*}
and $I'_1=\ldots=I'_n$, where
\begin{equation*}
I'_j=\int_\Omega\frac{\wps^{q^*}(\omega)\wva_j^r(\omega)}
{\wc_r^{\,q^*(1-\gamma)/r+1}(\omega)}J(\omega)\,d\omega,\ j=1,\ldots,n.
\end{equation*}
Then
\begin{equation}\label{kpr}
E(p,q,r)=K\delta^\gamma,
\end{equation}
where
\begin{equation*}
K=\gamma^{-\frac\gamma p}\left(\frac{1-\gamma}n\right)^{-\frac{1-\gamma}r}\Biggl(\frac{B\left(q^*\gamma /p,q^*(1-\gamma)/r\right)I}{|\nu+d(1/r-1/p)|(\gamma r+(1-\gamma)p)}\Biggr)^{1/q^*},
\end{equation*}
where $B(\cdot,\cdot)$ is the Eiler beta-function. Moreover, the method
\begin{equation*}
\wm(y)(t)=\kappa\left(\wxi^{\frac1{\nu+d(1/r-1/p)}}t\right)\psi(t)y(t),
\end{equation*}
where
\begin{equation*}
\wxi=\delta\gamma^{-1/p}\left(\frac{1-\gamma}n\right)^{1/r}\Biggl(\frac{B\left(q^*\gamma /p,q^*(1-\gamma)/r\right)I}{|\nu+d(1/r-1/p)|(\gamma r+(1-\gamma)p)}\Biggr)^{1/r-1/p},
\end{equation*}
is optimal recovery method.
\end{theorem}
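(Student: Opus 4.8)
The plan is to derive the theorem from Theorem~\ref{S22} by evaluating, in polar coordinates, the quantities $I_1$ and $I_{j+1}$ that enter formula \eqref{EEpqr} and the scaling \eqref{yy}. Once the hypotheses of Theorem~\ref{S22} (or of Theorems~\ref{T2},~\ref{TT3} for $P_1$, $P_2$) are checked, that theorem gives $E(p,q,r)=\delta^\gamma I_1^{-\gamma/p}I_2^{-(1-\gamma)/r}(I_1+nI_2)^{1/q}$ and the optimal method $\wm(y)(t)=k(\xi t)\psi(t)y(t)$ with $\xi=(\delta I_1^{-1/p}I_2^{1/r})^{1/(\nu+d(1/r-1/p))}$, so everything reduces to proving
\begin{equation*}
I_1=\frac{\gamma\,B\!\left(q^*\gamma/p,\,q^*(1-\gamma)/r\right)}{|\nu+d(1/r-1/p)|(\gamma r+(1-\gamma)p)}\,I,\qquad
I_{j+1}=\frac{(1-\gamma)\,B\!\left(q^*\gamma/p,\,q^*(1-\gamma)/r\right)}{|\nu+d(1/r-1/p)|(\gamma r+(1-\gamma)p)}\,I'_j .
\end{equation*}
Granting these, $\wc_r=\sum_j\wva_j^{\,r}$ gives $\sum_j I'_j=I$, hence $I'_j=I/n$; the assumed equalities $I'_1=\cdots=I'_n$ become $I_2=\cdots=I_{n+1}$; $I_1<\infty$ is equivalent to $I<\infty$ because $\gamma\in(0,1)$ makes both arguments of the Beta function positive; substituting into \eqref{EEpqr} and collecting the powers of $I$ (with total exponent $-\gamma/p-(1-\gamma)/r+1/q=1/q^*$) yields $E(p,q,r)=K\delta^\gamma$ with $K$ as stated; and substituting into $\xi$ yields $\xi=\wxi^{1/(\nu+d(1/r-1/p))}$ with $\wxi$ as in the statement.

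First I would establish these identities for $(p,q,r)\in P$, where $k$ is given by \eqref{kz}. Writing $t=\rho\omega$, $dt=\rho^{d-1}\,d\rho\,J(\omega)\,d\omega$, homogeneity gives $|\psi(t)|=\rho^\eta\wps(\omega)$, $|\varphi_j(t)|=\rho^\nu\wva_j(\omega)$, $s_r(t)=\rho^{r\nu}\wc_r(\omega)$, so the right-hand side of \eqref{kz} equals $\rho^\beta\widetilde g(\omega)$ with an explicit nonzero exponent $\beta$ and an explicit positive angular factor $\widetilde g$; hence along each ray $k(\rho\omega)$ is a monotone function of $\rho$ ranging over $(0,1)$. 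Taking $k$ as the radial variable, $\rho=\widetilde g(\omega)^{-1/\beta}k^{1/(\beta(p-q))}(1-k)^{-1/(\beta(r-q))}$, and expanding $d\rho$ turns the radial part of $I_1$ into $\mu_1B(\alpha^*,\beta^*+1)-\mu_2B(\alpha^*+1,\beta^*)$ for explicit $\mu_1,\mu_2,\alpha^*,\beta^*$; the identities $B(a,b+1)=\frac b{a+b}B(a,b)$ and $B(a+1,b)=\frac a{a+b}B(a,b)$ collapse this to $\frac{\mu_1\beta^*-\mu_2\alpha^*}{\alpha^*+\beta^*}\,B(\alpha^*,\beta^*)$. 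A short computation from \eqref{gam}, \eqref{qu} and the relation $\gamma/p+(1-\gamma)/r=1/q-1/q^*$ identifies $\beta^*=q^*(1-\gamma)/r$ and $\alpha^*=1+q^*\gamma/p$, so one more application of $B(a+1,b)=\frac a{a+b}B(a,b)$ produces the factor $B(q^*\gamma/p,q^*(1-\gamma)/r)$; the angular factors recombine exactly into the integrand of $I$, and the leftover elementary constant simplifies via $pr(q^*-q)/(qq^*)=\gamma r+(1-\gamma)p$. The computation of $I_{j+1}$ is the same with $k^{p/(p-q)}$ replaced by $k^{r/(p-q)}$ and an extra factor $|\varphi_j(t)|^r$; it produces the integrand of $I'_j$ and the factor $1-\gamma$ in place of $\gamma$.

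The cases $(p,q,r)\in P_1$ and $(p,q,r)\in P_2$ are handled in the same manner, starting from the second and third parts of the proof of Theorem~\ref{S22}, from the piecewise definitions of $k$, and from the corresponding integrals there; in each regime the radial integral again reduces to $B(q^*\gamma/p,q^*(1-\gamma)/r)$ and the angular parts to $I$ and $I'_j$, so the three cases merge into the single normalized formula for $K$. I expect the main difficulty to be the bookkeeping of exponents in the radial change of variable — verifying $\beta^*=q^*(1-\gamma)/r$ and $\alpha^*=1+q^*\gamma/p$ (the extra summand $1$ being exactly what yields the clean Beta function in the statement) and that the residual constant collapses to $1/(\gamma r+(1-\gamma)p)$ — carried uniformly through the three parameter regimes; one must also check that the multipliers $\lambda_0,\lambda_j$ supplied by the proof of Theorem~\ref{S22} are nonnegative, so that Theorems~\ref{T1}--\ref{TT3} genuinely apply.
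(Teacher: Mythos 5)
Your proposal is correct and follows essentially the same route as the paper: the paper also reduces Theorem~\ref{T3} to Theorem~\ref{S22} and, case by case over $P$, $P_1$, $P_2$, evaluates $I_1$ and $I_{j+1}$ in polar coordinates as Beta-function multiples of $I$ and $I_j'$ (your formulas agree with the paper's after noting $pr(\gamma/p+(1-\gamma)/r)=\gamma r+(1-\gamma)p$), then substitutes into \eqref{EEpqr} and \eqref{yy}. The only difference is presentational: the paper cites the radial Beta-function computations from [Os, Theorem~3] and [Os22, Theorem~3] and uses Beta-function recurrences to merge the three regimes, whereas you sketch the radial change of variables directly, which is the same calculation.
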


\begin{proof}
First of all, we note that $I_1'+\ldots+I_n'=I$. Consequently, $I_j'=I/n$, $j=1,\ldots,n$. We will apply Theorem~\ref{S22}.

1. Let $(p,q,r)\in P$. Passing to the polar transformation we obtain
\begin{equation*}
\frac{k^{\frac1{p-q}}(\rho,\omega)}{(1-k(\rho,\omega))^{\frac1{r-q}}}=\rho^{\frac{\eta q(p-r)-\nu r(p-q)}{(p-q)(r-q)}}\frac{\wps^{\frac{q(p-r)}{(p-q)(r-q)}}(\omega)}
{\wc_r^{\frac1{r-q}}(\omega)}.
\end{equation*}
Using the same scheme of calculation of $I_1$ as it was given in \cite[Theorem~3]{Os}, we obtain
\begin{equation*}
I_1=\frac\gamma{pr|\nu+d(1/r-1/p)|}\left(\frac\gamma p+\frac{1-\gamma}r\right)^{-1}B(\wpp,\wqq)I,
\end{equation*}
where
\begin{equation*}
\wpp=q^*\frac{\gamma}p,\quad\wqq=q^*\frac{1-\gamma}r.
\end{equation*}
In a similar way we calculate
\begin{equation*}
I_{j+1}=\frac{1-\gamma}{pr|\nu+d(1/r-1/p)|}\left(\frac\gamma p+\frac{1-\gamma}r\right)^{-1}B(\wpp,\wqq)I_j',\ j=1,\ldots,n.
\end{equation*}
Thus,
\begin{equation*}
I_2=\frac{1-\gamma}{npr|\nu+d(1/r-1/p)|}\left(\frac\gamma p+\frac{1-\gamma}r\right)^{-1}B(\wpp,\wqq)I.
\end{equation*}
It remains to substitute these values into \eqref{EEpqr} and \eqref{yy}.

2. Let $(p,q,r)\in P_1$. Now we use the scheme of calculation of $I_1$ which was given in \cite[Theorem~3]{Os22}. We obtain
\begin{multline*}
I_1=\frac I{|\nu-\eta|q}B\left(q^*\gamma/p+2,q^*(1-\gamma)/q\right)\\
=\frac I{|\nu-\eta|q}\frac{q^*\gamma/p+1}{q^*\gamma/p+1+q^*(1-\gamma)/q}
B\left(q^*\gamma/p+1,q^*(1-\gamma)/q\right).
\end{multline*}
Since $r=q$ we have
\begin{equation*}
\frac1{q^*}=\gamma\left(\frac1q-\frac1p\right),\quad\gamma=\frac{\nu-\eta}{\nu+d(1/q-1/p)}.
\end{equation*}
Therefore, $q^*\gamma/p+1=q^*\gamma/q$. Hence
\begin{multline*}
I_1=\frac{I\gamma}{|\nu-\eta|q}B\left(q^*\gamma/p+1,q^*(1-\gamma)/q\right)\\
=\frac{I\gamma}{|\nu-\eta|q}\frac{q^*\gamma/p}{q^*\gamma/p+q^*(1-\gamma)/q}
B\left(q^*\gamma/p,q^*(1-\gamma)/q\right)\\
=\frac\gamma{pr|\nu+d(1/r-1/p)|}\left(\frac\gamma p+\frac{1-\gamma}r\right)^{-1}B(\wpp,\wqq)I.
\end{multline*}
By the similar way we get
\begin{multline*}
I_{j+1}=\frac{I_j'}{|\nu-\eta|q}B\left(q^*\gamma/p+1,q^*(1-\gamma)/q+1\right)\\
=\frac{I_j'}{|\nu-\eta|q}\frac{q^*\gamma/p}{q^*\gamma/p+q^*(1-\gamma)/q+1}
B\left(q^*\gamma/p,q^*(1-\gamma)/q+1\right)\\
=\frac{I_j'\gamma}{|\nu-\eta|p}B\left(q^*\gamma/p,q^*(1-\gamma)/q+1\right)
=\frac{I_j'\gamma}{|\nu-\eta|p}\frac{q^*(1-\gamma)/q}{q^*\gamma/p+q^*(1-\gamma)/q}
B\left(q^*\gamma/p,q^*(1-\gamma)/q\right)\\
=\frac{1-\gamma}{npr|\nu+d(1/r-1/p)|}\left(\frac\gamma p+\frac{1-\gamma}r\right)^{-1}B(\wpp,\wqq)I.
\end{multline*}
Thus, we obtain the same formulas for $I_1$ and $I_2$ as in the first case.

3. Let $(p,q,r)\in P_2$. Here we use the scheme of calculation of $J_1$ and $J_2$ which was given in \cite[Theorem~3]{Os22}. We obtain
\begin{align*}
I_1&=\frac I{|\eta|p}B\left(q^*\gamma/p+1,q^*(1-\gamma)/r+1\right),\\
I_{j+1}&=\frac{I_j'}{|\eta|p}B\left(q^*\gamma/p,q^*(1-\gamma)/r+2\right),\ j=1,\ldots,n.
\end{align*}
Since $q=p$ we have
\begin{equation*}
\frac1{q^*}=(1-\gamma)\left(\frac1p-\frac1r\right),\quad
1-\gamma=\frac\eta{\nu+d(1/r-1/p)}.
\end{equation*}
Therefore, $q^*(1-\gamma)/r+1=q^*(1-\gamma)/p$. Hence
\begin{multline*}
I_1=\frac I{|\eta|p}\frac{q^*\gamma/p}{q^*\gamma/p+q^*(1-\gamma)/r+1}
B\left(q^*\gamma/p,q^*(1-\gamma)/r+1\right)\\
=\frac{I\gamma}{|\eta|p}B\left(q^*\gamma/p,q^*(1-\gamma)/r+1\right)
=\frac{I\gamma}{|\eta|p}
\frac{q^*(1-\gamma)/r}{q^*\gamma/p+q^*(1-\gamma)/r}B\left(q^*\gamma/p,q^*(1-\gamma)/r\right)\\
=\frac\gamma{pr|\nu+d(1/r-1/p)|}\left(\frac\gamma p+\frac{1-\gamma}r\right)^{-1}B(\wpp,\wqq)I.
\end{multline*}
For $I_{j+1}$, $j=1,\ldots,n$, we have
\begin{multline*}
I_{j+1}=\frac{I_j'}{|\eta|p}\frac{q^*(1-\gamma)/r+1}{q^*\gamma/p+q^*(1-\gamma)/r+1}
B\left(q^*\gamma/p,q^*(1-\gamma)/r+1\right)\\
=\frac{I_j'(1-\gamma)}{|\eta|p}B\left(q^*\gamma/p,q^*(1-\gamma)/r+1\right)
=\frac{I_j'(1-\gamma)}{pr|\nu+d(1/r-1/p)|}\left(\frac\gamma p+\frac{1-\gamma}r\right)^{-1}B(\wpp,\wqq)\\
=\frac{1-\gamma}{npr|\nu+d(1/r-1/p)|}\left(\frac\gamma p+\frac{1-\gamma}r\right)^{-1}B(\wpp,\wqq)I.
\end{multline*}
Again we obtain the same formulas for $I_1$ and $I_2$ as in the previous cases.
\end{proof}

For $n=1$ Theorem~\ref{T3} was proved in \cite{Os22}. Analogously to Corollary~\ref{Sdva} we obtain

\begin{corollary}\label{Sdva1}
Assume that conditions of Theorem~\ref{T3} hold. Then for all $x\cd$ such that $x\cd\in\lp$ and $\varphi_j\cd x\cd\in\lr$, $j=1,\ldots,n$, the sharp inequality
\begin{equation*}
\|\psi\cd x\cd\|_{\lqq}\le K\|x\cd\|_{\lp}^\gamma\left(\max_{1\le j\le n}\|\varphi_j\cd x\cd\|_{\lr}\right)^{1-\gamma}
\end{equation*}
holds.
\end{corollary}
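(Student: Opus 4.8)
The plan is to mirror the argument of Corollary~\ref{Sdva}, using Theorem~\ref{T3} in place of Theorem~\ref{S22} to supply the value of the optimal recovery error. The key observation is that, under the hypotheses of Theorem~\ref{T3}, the conditions of Theorem~\ref{S22} hold (this is implicit in the proof of Theorem~\ref{T3}, where $I_1$, $I_2$ are computed explicitly and $I_2=\ldots=I_{n+1}$ is verified), so identity \eqref{Edv} is available: $E(p,q,r)$ equals the supremum of $\|\psi\cd x\cd\|_{\lqq}$ over $x\cd$ with $\|x\cd\|_{\lpn}\le\delta$ and $\|\varphi_j\cd x\cd\|_{\lr}\le1$, $j=1,\ldots,n$.

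First I would fix a nonzero $x\cd\in\lp$ with $\varphi_j\cd x\cd\in\lr$ for all $j$, set $A=\max_{1\le j\le n}\|\varphi_j\cd x\cd\|_{\lr}$, and normalize $\wx\cd=x\cd/A$. Then $\|\varphi_j\cd\wx\cd\|_{\lr}\le1$ for every $j$, and putting $\delta=\|\wx\cd\|_{\lpn}$ (note $T_0=T$ here, so $\lpn=\lp$) the pair $(\wx\cd,\delta)$ is admissible for the supremum in \eqref{Edv}. Hence $\|\psi\cd\wx\cd\|_{\lqq}\le E(p,q,r)=K\delta^\gamma$ by \eqref{kpr}. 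Unravelling the normalization gives $\|\psi\cd x\cd\|_{\lqq}\le K\|x\cd\|_{\lp}^\gamma A^{1-\gamma}$, which is exactly the asserted inequality.

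For sharpness, suppose some $\wC<K$ worked in place of $K$ for all admissible $x\cd$. Taking the supremum over all $x\cd$ with $\|x\cd\|_{\lp}\le\delta$ and $\|\varphi_j\cd x\cd\|_{\lr}\le1$ and invoking \eqref{Edv} once more would yield $E(p,q,r)\le\wC\delta^\gamma<K\delta^\gamma$, contradicting \eqref{kpr}. Therefore $K$ is the best possible constant.

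The only point requiring a little care is the legitimacy of applying \eqref{Edv}, since that identity was established (in the remark following Theorem~\ref{TT3}) under the hypotheses of Theorems~\ref{T1}--\ref{TT3}, which in the homogeneous setting are precisely the hypotheses checked inside the proof of Theorem~\ref{S22}. Since Theorem~\ref{T3} is proved by verifying the hypotheses of Theorem~\ref{S22} (with $T=T_0$ a cone, $d\mu=dt$, and the finiteness conditions on $I_1,I_2$ following from $I<\infty$ together with the beta-function computations), \eqref{Edv} is in force, and no genuine obstacle remains — the proof is a verbatim repetition of the argument for Corollary~\ref{Sdva} with the quantitative input replaced.
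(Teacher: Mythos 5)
Your proof is correct and coincides with the paper's intended argument: the paper gives no separate proof, stating only that the result is obtained ``analogously to Corollary~\ref{Sdva},'' and your normalization of $x\cd$ by $A$, the use of \eqref{Edv} together with \eqref{kpr}, and the contradiction argument for sharpness are exactly that adaptation. Your remark that \eqref{Edv} is legitimate because Theorem~\ref{T3} is proved by verifying the hypotheses of Theorem~\ref{S22} (hence of Theorems~\ref{T1}--\ref{TT3}) is the right justification and is consistent with how the paper proceeds.
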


Let $|w\cd|$, $|w_0\cd|$ be homogenous functions of degrees $\theta$, $\theta_0$, respectively and $|w_j\cd|$, $j=1,\ldots,n$, be homogenous functions of degree $\theta_1$. We assume that $w(t),w_0(t)\ne0$ and $\sum_{j=1}^n|w_j(t)|\ne0$ for almost all $t\in T$.
Define $\wtw\cd$, $\wtw_0\cd$, $\wtw_1\cd$ by \eqref{wtw}. Similar to Corollary~\ref{Sl3} we obtain

\begin{corollary}\label{corT3}
Let $(p,q,r)\in P\cup P_1\cup P_2$ and $\wg\in(0,1)$ where $\wg$ is defined by \eqref{wg}. Assume that
\begin{equation*}
\wII=\int_\Omega\frac{\wtw^{\vq}(\omega)}{\wtw_0^{\vq\wg}(\omega)\left(\sum_{k=1}^n
\wtw_k^r(\omega)\right)^{\vq(1-\wg)/r}}J(\omega)\,d\omega<\infty,
\end{equation*}
where
\begin{equation*}
\frac1{\vq}=\frac1q-\frac{\wg}p-\frac{1-\wg}r,
\end{equation*}
and $\wII'_1=\ldots=\wII'_n$, where
\begin{equation*}
\wII'_j=\int_\Omega\frac{\wtw^{\vq}(\omega)\wtw^r_j(\omega)}{\wtw_0^{\vq\wg}(\omega)\left(\sum_{k=1}^n
\wtw_k^r(\omega)\right)^{\vq(1-\wg)/r+1}}J(\omega)\,d\omega,\ j=1,\ldots,n.
\end{equation*}
Then for all $x\cd$ such that $w_0\cd x\cd\in L_p(T,\mu)$ and $w_j\cd x\cd\in L_r(T,\mu)$, $j=1,\ldots,n$, the sharp inequality
\begin{equation*}
\|w\cd x\cd\|_{\lqq}\le\wC\|w_0\cd x\cd\|_{\lp)}^{\wg}\left(\max_{1\le j\le n}\|\omega_j\cd x\cd\|_{\lr}\right)^{1-\wg}
\end{equation*}
holds, where
\begin{equation}\label{wC}
\wC=\wg^{-\frac{\wg}p}\left(\frac{1-\wg}n\right)^{-\frac{1-\wg}r}\Biggl(\frac{B\left(\vq\wg/p,\vq
(1-\wg)/r\right)\wII}{|\wt_1-\wt_0|(\wg r+(1-\wg)p)}\Biggr)^{1/\vq}.
\end{equation}
\end{corollary}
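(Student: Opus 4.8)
The plan is to deduce Corollary~\ref{corT3} from Corollary~\ref{Sdva1} in exactly the way Corollary~\ref{Sl3} was deduced from Corollary~\ref{Sdva}. First I would set
\begin{equation*}
\psi(t)=\frac{w(t)}{w_0(t)},\quad\varphi_j(t)=\frac{w_j(t)}{w_0(t)},\ j=1,\ldots,n,
\end{equation*}
so that $|\psi\cd|$ is a homogenous function of degree $\eta=\theta-\theta_0$, each $|\varphi_j\cd|$ is homogenous of degree $\nu=\theta_1-\theta_0$, and $\psi(t)\ne0$, $\sum_{j=1}^n|\varphi_j(t)|\ne0$ for almost all $t\in T$; thus $\psi,\varphi_1,\ldots,\varphi_n$ satisfy the standing assumptions of section~\ref{rd}.

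Next I would translate the parameters of Theorem~\ref{T3} into those of the statement. From \eqref{gam} and \eqref{wg} the exponent $\gamma$ associated with $\psi,\varphi_j$ equals $\wg$, so the hypothesis $\gamma\in(0,1)$ is exactly $\wg\in(0,1)$; from \eqref{wg}, $\nu+d(1/r-1/p)=\wt_1-\wt_0$; and from \eqref{qu}, $q^*=\vq$. Using \eqref{wtw} and the homogeneity of $w,w_0,w_j$, one checks that on $\Omega$ one has $\wps=\wtw/\wtw_0$, $\wva_j=\wtw_j/\wtw_0$, and $\wc_r=\sum_{k=1}^n\wtw_k^r/\wtw_0^r$. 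Substituting these into the integrals $I$ and $I'_j$ of Theorem~\ref{T3} and collecting the powers of $\wtw_0$ — the crucial simplification being $\vq-\vq(1-\wg)=\vq\wg$ — I expect to obtain precisely $I=\wII$ and $I'_j=\wII'_j$ for $j=1,\ldots,n$. Consequently the hypotheses $\wII<\infty$ and $\wII'_1=\ldots=\wII'_n$ are nothing but the hypotheses $I<\infty$ and $I'_1=\ldots=I'_n$ of Theorem~\ref{T3}, so Corollary~\ref{Sdva1} applies to $\psi,\varphi_1,\ldots,\varphi_n$.

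I would then invoke Corollary~\ref{Sdva1}: for every $y\cd$ with $y\cd\in\lp$ and $\varphi_j\cd y\cd\in\lr$, $j=1,\ldots,n$,
\begin{equation*}
\|\psi\cd y\cd\|_{\lqq}\le K\|y\cd\|_{\lp}^\gamma\left(\max_{1\le j\le n}\|\varphi_j\cd y\cd\|_{\lr}\right)^{1-\gamma},
\end{equation*}
where $K$ is the sharp constant of Theorem~\ref{T3}. Substituting $\gamma=\wg$, $q^*=\vq$, $|\nu+d(1/r-1/p)|=|\wt_1-\wt_0|$, and $I=\wII$ into the formula for $K$ turns it into the constant $\wC$ of \eqref{wC}. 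Finally I would put $y\cd=w_0\cd x\cd$, so that $\psi\cd y\cd=w\cd x\cd$, $\varphi_j\cd y\cd=w_j\cd x\cd$, $\|y\cd\|_{\lp}=\|w_0\cd x\cd\|_{\lp}$, and the conditions $y\cd\in\lp$, $\varphi_j\cd y\cd\in\lr$ become $w_0\cd x\cd\in L_p(T,\mu)$, $w_j\cd x\cd\in L_r(T,\mu)$; this gives the asserted inequality with constant $\wC$, and its sharpness is inherited from \eqref{kpr} exactly as in the proof of Corollary~\ref{Sdva1}.

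The only genuine computation — and the place where a sign or an exponent is easiest to mishandle — is the bookkeeping that converts $I,I'_j$ into $\wII,\wII'_j$ and $K$ into $\wC$; the remaining steps are a formal change of variables and appeals to results already proved.
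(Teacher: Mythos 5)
Your proposal is correct and is exactly the paper's intended argument: the paper obtains Corollary~\ref{corT3} from Corollary~\ref{Sdva1} "similar to Corollary~\ref{Sl3}", i.e.\ by setting $\psi=w/w_0$, $\varphi_j=w_j/w_0$, checking $\gamma=\wg$, $q^*=\vq$, $\nu+d(1/r-1/p)=\wt_1-\wt_0$, $I=\wII$, $I_j'=\wII_j'$, and then substituting $y\cd=w_0\cd x\cd$. Your bookkeeping of the $\wtw_0$-powers (net exponent $\vq\wg$) and of the constant $K\mapsto\wC$ is accurate, so no gap remains.
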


The statement of Corollary~\ref{corT3} for $(p,q,r)\in P$ and $n=1$ was proved in \cite{B}.

We give an example of weights for which conditions of Corollary~\ref{corT3} hold. Let $T=\mathbb R^d_+$, $\theta_1>0$,
\begin{equation}\label{wei}
w(t)=(t_1^2+\ldots+t_d^2)^{\theta/2},\quad w_0(t)=(t_1^2+\ldots+t_d^2)^{\theta_0/2},\quad w_j(t)=t_j^{\theta_1},\ j=1,\ldots,d.
\end{equation}
The condition $0<\wg<1$ is equivalent to inequalities $\wt_1>\wt>\wt_0$ or $\wt_1<\wt<\wt_0$. Therefore, we assume that for $\theta$ and $\theta_0$ inequalities $\theta_1+d(1/r-1/q)>\theta>\theta_0+d(1/p-1/q)$ or $\theta_1+d(1/r-1/q)<\theta<\theta_0+d(1/p-1/q)$ hold.

It is easy to check that $\wtw\cd=\wtw_0\cd=1$ and $\wtw_j(\omega)={\wtt_j\hspace{-2pt}}^{\theta_1}(\omega)$, $j=1,\ldots,d$, where
\begin{equation*}
\arraycolsep=0.08em
\begin{array}{rcl}
\wtt_1(\omega)&=&\cos\omega_1,\\
\wtt_2(\omega)&=&\sin\omega_1\cos\omega_2,\\
\hdotsfor{3}\\
\wtt_{d-1}(\omega)&=&\sin\omega_1\sin\omega_2\ldots\sin\omega_{d-2}\cos\omega_{d-1},\\
\wtt_d(\omega)&=&\sin\omega_1\sin\omega_2\ldots\sin\omega_{d-2}\sin\omega_{d-1}.
\end{array}
\end{equation*}
Note that
\begin{equation*}
\sum_{k=1}^dt_k^2(\omega)=1.
\end{equation*}

For $\wII$ we have
\begin{equation}\label{wII}
\wII=\int_{\Pi_+^{d-1}}\frac{J(\omega)\,d\omega}{\left(\sum_{k=1}^d
{\wtt_k\hspace{-3pt}}^{r\theta_1}(\omega)\right)^{\vq(1-\wg)/r}},\quad\Pi_+^{d-1}=[0,\pi/2]^{d-1}.
\end{equation}
If $r\theta_1\le2$, then
\begin{equation}\label{etd1}
\sum_{k=1}^d
{\wtt_k\hspace{-3pt}}^{r\theta_1}(\omega)\ge\sum_{k=1}^d
{\wtt_k\hspace{-3pt}}^2(\omega)=1.
\end{equation}
For $r\theta_1>2$ by H\"older's inequality
\begin{equation*}
1=\sum_{k=1}^d
{\wtt_k\hspace{-3pt}}^2(\omega)\le\biggl(\sum_{k=1}^d
{\wtt_k\hspace{-3pt}}^{r\theta_1}(\omega)\biggr)^{\frac2{r\theta_1}}d^{1-\frac2{r\theta_1}}.
\end{equation*}
Thus,
\begin{equation}\label{etd2}
\sum_{k=1}^d
{\wtt_k\hspace{-3pt}}^{r\theta_1}(\omega)\ge d^{1-\frac{r\theta_1}2}.
\end{equation}
It follows by \eqref{etd1} and \eqref{etd2} that $\wII<\infty$.

For $\wII'_j$ we have
\begin{equation*}
\wII'_j=\int_{\Pi_+^{d-1}}\frac{\wtt_j\hspace{-3pt}^{r\theta_1}J(\omega)\,d\omega}
{\left(\sum_{k=1}^d
{\wtt_k\hspace{-3pt}}^{r\theta_1}(\omega)\right)^{\vq(1-\wg)/r+1}},\ j=1,\ldots,d.
\end{equation*}
Consider the integrals
\begin{equation*}
L_j=\int_{\mathbb R_+^d\cap\mathbb B^d}\frac{\left(\sum_{k=1}^dt_k^2\right)^{\theta_1\vq(1-\wg)/2}t_j^{r\theta_1}}
{\left(\sum_{k=1}^dt_k^{r\theta_1}\right)^{\vq(1-\wg)/r+1}}\,dt,\ j=1,\ldots,d,
\end{equation*}
where $\mathbb B^d$ is the unit ball in $\mathbb R^d$. If we change variables in $L_j$ changing places variables $t_j$ and $t_k$, then $L_j$ passes to $L_k$. Therefore, $L_1=\ldots=L_d$. Passing to the polar transformation we obtain that $L_j=\wII'_j/d$, $j=1,\ldots,d$. Consequently, $\wII'_1=\ldots=\wII'_d$.

Thus, we obtain

\begin{corollary}
Let $(p,q,r)\in P\cup P_1\cup P_2$, $\theta_1>0$, $\theta$ and $\theta_0$ be such that $\theta_1+d(1/r-1/q)>\theta>\theta_0+d(1/p-1/q)$ or $\theta_1+d(1/r-1/q)<\theta<\theta_0+d(1/p-1/q)$. Then for weights \eqref{wei} and all $x\cd$ for which $w_0\cd x\cd\in L_p(\mathbb R_+^d)$ and $w_j\cd x\cd\in L_r(\mathbb R_+^d)$, $j=1,\ldots,d$, the sharp inequality
\begin{equation*}
\|w\cd x\cd\|_{L_q(\mathbb R_+^d)}\le\wC\|w_0\cd x\cd\|_{L_p(\mathbb R_+^d)}^{\wg}\left(\max_{1\le j\le d}\|\omega_j\cd x\cd\|_{L_r(\mathbb R_+^d)}\right)^{1-\wg}
\end{equation*}
holds, where $\wC$ is defined by \eqref{wC} in which the value $\wII$ is defined by \eqref{wII}.
\end{corollary}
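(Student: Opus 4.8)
The plan is to obtain this statement by applying Corollary~\ref{corT3} with $T=\mathbb R^d_+$, $n=d$, and the weights \eqref{wei}, so the whole argument is just a verification of the three hypotheses of that corollary for this concrete choice. First I would record the angular data: substituting \eqref{wei} into \eqref{wtw} gives $\wtw(\omega)=\wtw_0(\omega)=1$ and $\wtw_j(\omega)=\wtt_j^{\theta_1}(\omega)$, $j=1,\ldots,d$, where $\wtt_1,\ldots,\wtt_d$ are the Cartesian components of the unit vector parametrised by $\omega$, so that $\sum_{k=1}^d\wtt_k^2(\omega)=1$. Since $|w|,|w_0|$ are homogeneous of degrees $\theta,\theta_0$ and each $|w_j|$ is homogeneous of degree $\theta_1$, the quantities in \eqref{wg} are $\wt=\theta+d/q$, $\wt_0=\theta_0+d/p$, $\wt_1=\theta_1+d/r$; a short computation then shows that $0<\wg<1$ is equivalent to $\wt_1>\wt>\wt_0$ or $\wt_1<\wt<\wt_0$, that is, exactly to the two-sided inequality imposed on $\theta,\theta_0,\theta_1$.

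Next I would verify the finiteness condition $\wII<\infty$. In polar coordinates $\wII$ becomes the integral \eqref{wII} over the compact cube $\Pi_+^{d-1}=[0,\pi/2]^{d-1}$, on which $J(\omega)$ is bounded, so the only point to settle is a uniform positive lower bound for the denominator $\sum_{k=1}^d\wtt_k^{r\theta_1}(\omega)$. If $r\theta_1\le2$ this sum is at least $\sum_{k=1}^d\wtt_k^2(\omega)=1$; if $r\theta_1>2$, applying H\"older's inequality to the identity $1=\sum_{k=1}^d\wtt_k^2(\omega)$ yields $\sum_{k=1}^d\wtt_k^{r\theta_1}(\omega)\ge d^{1-r\theta_1/2}$. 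In either case the denominator stays bounded away from $0$, so $\wII<\infty$.

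It then remains to establish the symmetry $\wII'_1=\ldots=\wII'_d$. For this I would introduce the auxiliary integrals
\begin{equation*}
L_j=\int_{\mathbb R_+^d\cap\mathbb B^d}\frac{\left(\sum_{k=1}^dt_k^2\right)^{\theta_1\vq(1-\wg)/2}t_j^{r\theta_1}}{\left(\sum_{k=1}^dt_k^{r\theta_1}\right)^{\vq(1-\wg)/r+1}}\,dt,\quad j=1,\ldots,d,
\end{equation*}
taken over $\mathbb R_+^d\cap\mathbb B^d$. The change of variables interchanging the coordinates $t_j$ and $t_k$ is a measure-preserving bijection of $\mathbb R_+^d\cap\mathbb B^d$ carrying $L_j$ onto $L_k$, whence $L_1=\ldots=L_d$; passing to polar coordinates identifies $L_j$ with $\wII'_j/d$, so $\wII'_1=\ldots=\wII'_d$. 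With all hypotheses of Corollary~\ref{corT3} verified, the asserted sharp inequality follows at once, with $\wC$ given by \eqref{wC} and $\wII$ by \eqref{wII}.

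I do not expect a genuine obstacle here, as the whole thing is a verification sitting on top of Corollary~\ref{corT3}; the one place that calls for care is the lower bound for $\sum_{k=1}^d\wtt_k^{r\theta_1}(\omega)$ near the faces of $\Pi_+^{d-1}$ where some of the $\wtt_k$ vanish, which is precisely what forces the split into the cases $r\theta_1\le2$ and $r\theta_1>2$ and the use of H\"older's inequality in the second. The rest is bookkeeping: translating $\wg\in(0,1)$ from \eqref{wg} into the explicit inequality on $\theta$, and specialising the constant \eqref{wC} to the present weights.
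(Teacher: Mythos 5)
Your proposal is correct and follows essentially the same route as the paper: it verifies the hypotheses of Corollary~\ref{corT3} for the weights \eqref{wei}, translating $\wg\in(0,1)$ into the stated inequalities on $\theta,\theta_0,\theta_1$, bounding $\sum_{k=1}^d\wtt_k^{r\theta_1}(\omega)$ from below with the same split into $r\theta_1\le2$ and $r\theta_1>2$ (H\"older in the latter) to get $\wII<\infty$, and using the same coordinate-swap symmetrization of the auxiliary integrals $L_j$ over $\mathbb R_+^d\cap\mathbb B^d$ with the identification $L_j=\wII'_j/d$ to get $\wII'_1=\ldots=\wII'_d$. No gaps; this matches the paper's argument.
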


We give one more example.

\begin{corollary}\label{cor7}
Let $(p,q,r)\in P\cup P_1\cup P_2$, weights $w\cd$, $w_0\cd$, $w_1\cd$ be defined by \eqref{wei} for $\theta=d(1-1/q)$, $\theta_0=d-(\lambda+d)/p$, $\theta_1=d+(\mu-d)/r$, where $\lambda,\mu>0$. Put
\begin{equation*}
\alpha=\frac\mu{p\mu+r\lambda},\quad\beta=\frac\lambda{p\mu+r\lambda}.
\end{equation*}
Then for all $x\cd$ such that $w_0\cd x\cd\in L_p(\mathbb R_+^d)$ and $w_j\cd x\cd\in L_r(\mathbb R_+^d)$, $j=1,\ldots,d$, the sharp inequality
$$\|w\cd x\cd\|_{L_q(\mathbb R_+^d)}\le C\|w_0\cd x\cd\|_{L_p(\mathbb R_+^d)}^{p\alpha}\left(\max_{1\le j\le d}\|\omega_j\cd x\cd\|_{L_r(\mathbb R_+^d)}\right)^{r\beta}$$
holds, where
\begin{equation*}
C=\frac{d^\beta}{(p\alpha)^\alpha(r\beta)^\beta}\left(\frac I{\lambda+\mu}
B\left(\frac\alpha{1/q-\alpha-\beta},\frac\beta{1/q-\alpha-\beta}\right)
\right)^{1/q-\alpha-\beta},
\end{equation*}
and
\begin{equation*}
I=\int_{\Pi_+^{d-1}}\frac{J(\omega)\,d\omega}{\left(\sum_{k=1}^d
{\wtt_k\hspace{-3pt}}^{r(d-1)+\mu}(\omega)\right)^{\frac\beta{1/q-\alpha-\beta}}}.
\end{equation*}
\end{corollary}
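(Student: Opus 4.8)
The plan is to specialize Corollary~\ref{corT3}, applied to the weights \eqref{wei}, to the exponents $\theta=d(1-1/q)$, $\theta_0=d-(\lambda+d)/p$, $\theta_1=d+(\mu-d)/r$; then everything reduces to bookkeeping of the parameters in \eqref{wg} and \eqref{wC}. First I would compute the reduced degrees from \eqref{wg}: $\wt=\theta+d/q=d$, $\wt_0=\theta_0+d/p=d-\lambda/p$, $\wt_1=\theta_1+d/r=d+\mu/r$. Hence $\wt_1-\wt=\mu/r$ and $\wt_1-\wt_0=\mu/r+\lambda/p=(p\mu+r\lambda)/(pr)$, so that
\begin{equation*}
\wg=\frac{\wt_1-\wt}{\wt_1-\wt_0}=\frac{p\mu}{p\mu+r\lambda}=p\alpha,\qquad 1-\wg=\frac{r\lambda}{p\mu+r\lambda}=r\beta.
\end{equation*}
Since $\lambda,\mu>0$ we have $\wt_1>\wt>\wt_0$, hence $\wg\in(0,1)$; and $1/\vq=1/q-\wg/p-(1-\wg)/r=1/q-\alpha-\beta$, which is positive for $(p,q,r)\in P\cup P_1\cup P_2$. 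Thus the exponents on the right-hand side of Corollary~\ref{corT3} are precisely $\wg=p\alpha$ and $1-\wg=r\beta$, as required.

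Next I would verify the remaining hypotheses of Corollary~\ref{corT3} with $n=d$. One has $\theta_1=(d(r-1)+\mu)/r>0$ because $r\ge1$, $d\ge1$, $\mu>0$, so the weights \eqref{wei} fall under the example preceding this corollary: $\wtw\cd=\wtw_0\cd\equiv1$, $\wtw_j(\omega)=\wtt_j^{\,\theta_1}(\omega)$, the integral $\wII$ is finite (by the estimates \eqref{etd1}--\eqref{etd2}), and $\wII'_1=\ldots=\wII'_d$ (by the change of variables swapping $t_j$ and $t_k$ in the associated ball integrals $L_j$). Writing $\wII$ out from \eqref{wII} with these weights and with the outer exponent $\vq(1-\wg)/r=\vq\beta=\beta/(1/q-\alpha-\beta)$ yields exactly the integral $I$ of the statement.

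It then remains to substitute into \eqref{wC}. Using $\wg=p\alpha$, $1-\wg=r\beta$, $n=d$, the prefactor collapses to
\begin{equation*}
\wg^{-\wg/p}\Bigl(\tfrac{1-\wg}{n}\Bigr)^{-(1-\wg)/r}=(p\alpha)^{-\alpha}(r\beta)^{-\beta}d^{\beta}=\frac{d^{\beta}}{(p\alpha)^{\alpha}(r\beta)^{\beta}};
\end{equation*}
the Beta-function arguments become $\vq\wg/p=\alpha/(1/q-\alpha-\beta)$ and $\vq(1-\wg)/r=\beta/(1/q-\alpha-\beta)$; and, since $\wg r+(1-\wg)p=pr(\alpha+\beta)$ and $|\wt_1-\wt_0|=(p\mu+r\lambda)/(pr)$,
\begin{equation*}
|\wt_1-\wt_0|\bigl(\wg r+(1-\wg)p\bigr)=\frac{p\mu+r\lambda}{pr}\cdot pr\,\frac{\mu+\lambda}{p\mu+r\lambda}=\lambda+\mu.
\end{equation*}
Raising the resulting bracket to the power $1/\vq=1/q-\alpha-\beta$ gives exactly the constant $C$ in the statement, and the sharp inequality follows from Corollary~\ref{corT3}.

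The computation is purely mechanical; the only points that need attention are the verification $\theta_1>0$ (so that the finiteness of $\wII$ and the equalities $\wII'_1=\ldots=\wII'_d$ established in the preceding example are available) and the cancellation producing the clean denominator $\lambda+\mu$.
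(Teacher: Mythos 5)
Your proposal is correct and follows exactly the route the paper intends: Corollary~\ref{cor7} is a direct specialization of Corollary~\ref{corT3} to the weights \eqref{wei}, using the facts established in the preceding example ($\wtw\cd=\wtw_0\cd\equiv1$, $\wtw_j=\wtt_j^{\,\theta_1}$, finiteness of $\wII$ via \eqref{etd1}--\eqref{etd2}, and $\wII'_1=\ldots=\wII'_d$), and your parameter bookkeeping ($\wg=p\alpha$, $1-\wg=r\beta$, $1/\vq=1/q-\alpha-\beta$, $|\wt_1-\wt_0|(\wg r+(1-\wg)p)=\lambda+\mu$) is accurate. One small remark: your computation gives the exponent $r\theta_1=d(r-1)+\mu$ in the sum defining $I$, whereas the statement prints $r(d-1)+\mu$; these agree only when $r=d$, so the printed exponent appears to be a transposition typo in the paper and your value is the correct one.
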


For $d=1$, $q=1$, and $(p,1,r)\in P$ the statement of Corollary~\ref{cor7} was proved in \cite{Le}.

\section{Recovery of differential operators from a noisy Fourier transform}\label{dif}

Let $T$ be a cone in $\mathbb R^d$, $d\mu(t)=dt$, $|\psi\cd|$ be homogenous function of degree $\eta$, $|\varphi_j\cd|$, $j=1,\ldots,n$, be homogenous functions of degrees $\nu$, $\psi(t)\ne0$ and $\sum_{j=1}^n|\varphi_j(t)|\ne0$ for almost all $t\in T$.

Let $S$ be the Schwartz space of rapidly decreasing $C^\infty$-functions on  $\mathbb R^d$, $S'$ be the corresponding space of distributions, and let
$F\colon S'\to S'$ be the Fourier transform. Set
\begin{equation*}
X_p=\left\{\,x\cd\in S':\varphi_j\cd Fx\cd\in\ld,\ j=1,\ldots,n,\ Fx\cd\in\lpd\,\right\}.
\end{equation*}
We define operators $D_j$, $j=1,\ldots,n$, as follows
$$D_jx\cd=F^{-1}(\varphi_j\cd Fx\cd)\cd,\ j=1,\ldots,n.$$
Put
\begin{equation}\label{Lam}
\Lambda x\cd=F^{-1}(\psi\cd Fx\cd)\cd.
\end{equation}

Consider the problem of the optimal recovery of values of the operator $\Lambda$ on the
class
\begin{equation*}
W_p^{\mathcal D}=\left\{\,x\cd\in X_p:\|D_jx\cd\|_{\ld}\le1,\ j=1,\ldots,n\,\right\},\quad\mathcal D=(D_1,\ldots,D_n),
\end{equation*}
from the noisy Fourier transform of the function $x\cd$. We assume that for each $x\cd\in W_p$
one knows a function $y\cd\in\lpd$ such that $\|Fx\cd-y\cd\|_{\lpd}\le\delta$, $\delta>0$. It is required to recover the function $\Lambda x\cd$ from $y\cd$. Assume that $\Lambda x\cd\in L_q(\mathbb R^d)$ for all $x\cd\in X_p$. As recovery methods we consider all possible mappings $m\colon\lpd\to L_q(\mathbb R^d)$. The error of a method $m$ is defined by
\begin{equation*}
e_{pq}(\Lambda,\mathcal D,m)=\sup_{\substack{x\cd\in W_p^{\mathcal D},\ y\cd\in\lpd\\\|Fx\cd-y\cd\|_{\lpd}\le\delta}}\|\Lambda x\cd-m(y)\cd\|_{L_q(\mathbb R^d)}.
\end{equation*}
The quantity
\begin{equation}\label{ED}
E_{pq}(\Lambda,\mathcal D)=\inf_{m\colon\lpd\to\ld}e_{pq}(\Lambda,\mathcal D,m)
\end{equation}
is called the error of optimal recovery, and the method on which the infimum is
attained, an optimal method.

\subsection{Recovery in the metric $\ld$}

By Plancherel's theorem,
\begin{equation*}
\|\Lambda x\cd-m(y)\cd\|_{\ld}=\frac1{(2\pi)^{d/2}}\|\wL x\cd-F(m(y))\cd\|_{\ld},
\end{equation*}
where $\wL x\cd=\psi\cd Fx\cd$. Moreover,
\begin{equation*}
\|D_jx\cd\|_{\ld}=\frac1{(2\pi)^{d/2}}\|\varphi_j\cd Fx\cd\|_{\ld},\ j=1,\ldots,n.
\end{equation*}
So, the problem under consideration coincides, up to a factor of $(2\pi)^{-d/2}$, with
problem \eqref{Ep} for $q=r=2$ with $\varphi_j\cd$ replaced by $(2\pi)^{-d/2}\varphi_j\cd$, $j=1,\ldots,n$.

For $q=r=2$ we denote by $\wwg$ and $\wwq$ the values $\gamma$ and $q^*$, which where defined by \eqref{gam} and \eqref{qu}:
\begin{equation*}
\wwg=\frac{\nu-\eta}{\nu+d(1/2-1/p)},\quad\wwq=\frac1{\wwg(1/2-1/p)}.
\end{equation*}
Set
\begin{equation*}
C_p(\nu,\eta)=\wwg^{-\frac\wwg p}
\left(\frac{1-\wwg}n\right)^{-\frac{1-\wwg}2}\Biggl(\frac{B\left(\wwq\wwg/p+1,
\wwq(1-\wwg)/2\right)}{2|\nu-\eta|}\Biggr)^{1/\wwq}.
\end{equation*}

\begin{theorem}\label{pred1}
Let $2<p\le\infty$, $\wwg\in(0,1)$. Assume that
\begin{equation}\label{Ieq}
I=\int_{\Pi^{d-1}}\frac{\wps^{\wwq}(\omega)}{\wc_2^{\,\wwq(1-\wwg)/2}(\omega)}
J(\omega)\,d\omega<\infty,\quad\Pi^{d-1}=[0,\pi]^{d-2}\times[0,2\pi]
\end{equation}
and $I'_1=\ldots=I'_n$, where
\begin{equation}\label{Ieq1}
I'_j=\int_{\Pi^{d-1}}\frac{\wps^{\wwq}(\omega)\wva_j^2(\omega)}
{\wc_2^{\,\wwq(1-\wwg)/2+1}(\omega)}J(\omega)\,d\omega,\ j=1,\ldots,n.
\end{equation}
Then
\begin{equation}\label{kpr1}
E_{p2}(\Lambda,\mathcal D)=\frac1{(2\pi)^{d\wwg/2}}C_p(\nu,\eta)I^{1/\wwq}\delta^{\wwg}.
\end{equation}
The method
\begin{equation}\label{mbe}
\wm(y)(t)=F^{-1}\left(\left(1-\beta\frac{s_2(t)}{|\psi(t)|^2}\right)_+
\psi(t)y(t)\right),
\end{equation}
where
\begin{equation*}
\beta=\frac{1-\wwg}{n(2\pi)^{d\wwg}}C_p^2(\nu,\eta)\left(\delta I^{1/2-1/p}\right)^{2\wwg},
\end{equation*}
is optimal.

Moreover, the sharp inequality
\begin{multline}\label{ene}
\|\Lambda x\cd\|_{\ld}\\
\le\frac1{(2\pi)^{d\wwg/2}}C_p(\nu,\eta)I^{1/\wwq}\|Fx\cd\|_{\lpd}^{\wwg}
\left(\max_{1\le j\le n}\|D_jx\cd\|_{\ld}\right)^{1-\wwg}
\end{multline}
holds.
\end{theorem}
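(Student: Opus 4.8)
The plan is to reduce the problem, via Plancherel's theorem, to the abstract optimal recovery problem of Section~\ref{res} with $q=r=2$, and then to invoke Theorem~\ref{T3} together with Corollary~\ref{Sdva1}.

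First I would make the reduction precise. For a method $\wm\colon\lpd\to\ld$ put $m=F^{-1}\circ\wm$; the map $\wm\mapsto m$ is a bijection of the set of recovery methods onto itself, and by Plancherel's theorem
\begin{equation*}
\|\Lambda x\cd-m(y)\cd\|_{\ld}=\frac1{(2\pi)^{d/2}}\|\psi\cd Fx\cd-\wm(y)\cd\|_{\ld},\qquad
\|D_jx\cd\|_{\ld}=\frac1{(2\pi)^{d/2}}\|\varphi_j\cd Fx\cd\|_{\ld}.
\end{equation*}
Writing $z\cd=Fx\cd$, the condition $x\cd\in W_p^{\mathcal D}$ becomes $\|(2\pi)^{-d/2}\varphi_j\cd z\cd\|_{\ld}\le1$, $j=1,\dots,n$, and the data condition becomes $\|z\cd-y\cd\|_{\lpd}\le\delta$. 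Hence $E_{p2}(\Lambda,\mathcal D)=(2\pi)^{-d/2}E(p,2,2)$, where $E(p,2,2)$ is the optimal recovery error in the abstract problem with $T=T_0=\mathbb R^d$, $d\mu=dt$, multiplier $\psi\cd$, and constraint weights $(2\pi)^{-d/2}\varphi_j\cd$. Since these weights differ from the $\varphi_j\cd$ by a common positive factor, and since $2<p$ gives $(p,2,2)\in P_1$, the hypotheses \eqref{Ieq}, \eqref{Ieq1} are exactly what is needed to apply the second case ($(p,q,r)\in P_1$) of Theorem~\ref{T3}, with the angular domain $\Omega$ equal to $\Pi^{d-1}$ (as $T=\mathbb R^d$).

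Next I would specialize the formula for $K$ in Theorem~\ref{T3} by setting $r=2$, $\gamma=\wwg$, $q^*=\wwq$, using $\nu+d(1/2-1/p)=(\nu-\eta)/\wwg$ and the Beta-function recursion, which gives
\begin{equation*}
\frac{B\left(\wwq\wwg/p,\wwq(1-\wwg)/2\right)}{|\nu+d(1/2-1/p)|\,(2\wwg+(1-\wwg)p)}
=\frac{B\left(\wwq\wwg/p+1,\wwq(1-\wwg)/2\right)}{2|\nu-\eta|},
\end{equation*}
so that the constant of Theorem~\ref{T3}, computed with the weights $\varphi_j\cd$ themselves, equals $C_p(\nu,\eta)I^{1/\wwq}$ with $I$ as in \eqref{Ieq}. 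Passing from $\varphi_j\cd$ to $(2\pi)^{-d/2}\varphi_j\cd$ multiplies $\wc_2$ by $(2\pi)^{-d}$, hence multiplies the integral appearing in $K$ by $(2\pi)^{dq^*(1-\wwg)/2}$ and $K$ itself by $(2\pi)^{d(1-\wwg)/2}$; combined with the factor $(2\pi)^{-d/2}$ from Plancherel this yields $E_{p2}(\Lambda,\mathcal D)=(2\pi)^{-d\wwg/2}C_p(\nu,\eta)I^{1/\wwq}\delta^{\wwg}$, i.e.\ \eqref{kpr1}. The same rescaling leaves the symmetry condition $I'_1=\ldots=I'_n$ unchanged, so nothing is lost there.

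For the optimal method, the optimal method of Theorem~\ref{T2} (used inside the $P_1$ case of the proof of Theorem~\ref{S22}, with $\lambda_1=\ldots=\lambda_n=\lambda$) has multiplier $t\mapsto(1-\lambda(2\pi)^{-d}s_2(t)|\psi(t)|^{-2})_+\psi(t)$; applying $F^{-1}$ and setting $\beta=\lambda(2\pi)^{-d}$ gives \eqref{mbe}, and the stated value of $\beta$ comes from the formulas for $\lambda$ and the scaling parameter $a$ in the $P_1$ case of Theorem~\ref{S22} after the same $2\pi$-bookkeeping. Finally, by Plancherel the inequality \eqref{ene} is equivalent to $\|\psi\cd Fx\cd\|_{\ld}\le C_p(\nu,\eta)I^{1/\wwq}\|Fx\cd\|_{\lpd}^{\wwg}(\max_j\|\varphi_j\cd Fx\cd\|_{\ld})^{1-\wwg}$ (the factors $(2\pi)^{-d\wwg/2}(2\pi)^{-d(1-\wwg)/2}=(2\pi)^{-d/2}$ cancelling those produced by $\|\Lambda x\cd\|_{\ld}$ and $\|D_jx\cd\|_{\ld}$), which is Corollary~\ref{Sdva1} with $q=r=2$ applied to $z\cd=Fx\cd$. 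The \emph{main obstacle} is purely the bookkeeping: making sure every power of $2\pi$ (from Plancherel, from the rescaling of the $\varphi_j\cd$ inside $\wc_2$, and from the exponents $q^*(1-\wwg)/2$ in $I$ and the $I'_j$) is tracked so that the constant collapses to $(2\pi)^{-d\wwg/2}C_p(\nu,\eta)$, and verifying the Beta-function identity above; for $p=\infty$ one repeats the argument with the obvious modifications.
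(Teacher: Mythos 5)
For $2<p<\infty$ your proposal is essentially the paper's proof: Plancherel reduces the problem to the abstract setting with $q=r=2$ and weights $(2\pi)^{-d/2}\varphi_j\cd$, one applies Theorem~\ref{T3} in the $P_1$ case, the Beta-function recursion converts the constant of Theorem~\ref{T3} into $C_p(\nu,\eta)$, the $2\pi$-bookkeeping produces the factor $(2\pi)^{-d\wwg/2}$ and the value of $\beta$, and inequality \eqref{ene} follows from Corollary~\ref{Sdva1}. All of that is sound.

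The genuine gap is the case $p=\infty$, which you dismiss with ``one repeats the argument with the obvious modifications.'' That does not work: Theorem~\ref{T3} rests on Theorem~\ref{S22}, hence on Theorem~\ref{T2} and Lemma~\ref{L3}, all of which require $p<\infty$ (the extremal function \eqref{xT2} involves the exponent $1/(p-q)$, the Lagrange term $\lambda_0 u^p$, and the constraint $\iTn\wx^p\,d\mu\le\delta^p$, none of which make sense at $p=\infty$), and Corollary~\ref{Sdva1} inherits the same restriction. The paper therefore gives a separate, structurally different argument for $p=\infty$: a lower bound via Lemma~\ref{L1} using the bang--bang extremal function with $F\wx(\xi)=\delta$ on the set $\{|\psi(\xi)|>\lambda\sqrt{s_2(\xi)}\}$ and $0$ elsewhere, with $\lambda$ chosen so that the constraints $\|D_j\wx\cd\|_{\ld}=1$ hold (this uses $I_j'=I/n$ and splits into the subcases $\nu>\eta$ and $\nu<\eta$); an upper bound for the very same method \eqref{mbe} obtained not from any general theorem but from a pointwise Cauchy--Bunyakovskii--Schwarz estimate showing the multiplier factor $S(\xi)\le1$; and the identities $\lambda^2=\beta$ and $C_\infty^2(\nu,\eta)=|2\eta+d|^{-1}\bigl(n|2\nu+d|\bigr)^{(\eta+d/2)/(\nu+d/2)}$ to match the two bounds with the stated constant. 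Likewise the sharpness of \eqref{ene} for $p=\infty$ needs its own verification, since it cannot be quoted from Corollary~\ref{Sdva1}. Without supplying this construction your proof covers only $2<p<\infty$, i.e.\ it omits the half of the theorem where the real work lies.
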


\begin{proof}
Let $2<p<\infty$. By Theorem~\ref{T3} we have
\begin{equation*}
E_{p2}(\Lambda,\mathcal D)=\frac1{(2\pi)^{d\wwg/2}}K\delta^{\wwg},
\end{equation*}
where
\begin{equation*}
K=\wwg^{-\frac\wwg p}\left(\frac{1-\wwg}n\right)^{-\frac{1-\wwg}2}\Biggl(\frac{B\left(\wwq\wwg /p,\wwq(1-\wwg)/2\right)I}{|\nu+d(1/2-1/p)|(2\wwg+(1-\wwg)p)}\Biggr)^{1/\wwq}.
\end{equation*}
From the properties of the beta-function we find that
\begin{multline}\label{bf}
\frac{B\left(\wwq\wwg /p,\wwq(1-\wwg)/2\right)}{|\nu+d(1/2-1/p)|(2\wwg+(1-\wwg)p)}\\=
\frac{B\left(\wwq\wwg /p+1,\wwq(1-\wwg)/2\right)(\wwq\wwg /p+\wwq(1-\wwg)/2)}{|\nu+d(1/2-1/p)|(2\wwg+(1-\wwg)p)\wwq\wwg /p}\\
=\frac{B\left(\wwq\wwg /p+1,\wwq(1-\wwg)/2\right)}{2|\nu-\eta|}.
\end{multline}
Thus, equality \eqref{kpr1} holds.

It follows by Theorem~\ref{T3} that the method
\begin{equation*}
\wm(y)(t)=\left(1-\frac{\wxi^{2\wwg}c_2(t)}{(2\pi)^d|\psi(t)|^2}\right)_+\psi(t)y(t),
\end{equation*}
where
\begin{equation*}
\wxi=\delta(2\pi)^{d\frac{1-\wwg}{2\wwg}}\wwg^{-1/p}\left(\frac{1-\wwg}n\right)^{1/2}
\Biggl(\frac{B\left(\wwq\wwg /p,\wwq(1-\wwg)/2\right)I}{|\nu+d(1/2-1/p)|(2\wwg+(1-\wwg)p)}\Biggr)^{1/2-1/p},
\end{equation*}
is optimal. In view of \eqref{bf} we obtain
\begin{multline*}
\frac{\wxi^{2\wwg}}{(2\pi)^d}=\frac{\delta^{2\wwg}
\wwg^{-2\wwg/p}}{(2\pi)^{d\wwg}}\left(\frac{1-\wwg}n\right)^{\wwg}\Biggl(\frac{B\left(\wwq\wwg /p+1,\wwq(1-\wwg)/2\right)I}{2|\nu-\eta|}\Biggr)^{2\wwg(1/2-1/p)}\\
=\frac{1-\wwg}{n(2\pi)^{d\wwg}}C_p^2(\nu,\eta)\left(\delta I^{1/2-1/p}\right)^{2\wwg}.
\end{multline*}

Inequality \eqref{ene} follows from Corollary~\ref{Sdva1}. Consider the case $p=\infty$. It follows by Lemma~\ref{L1} that
\begin{equation}\label{EDD}
E_{\infty2}(\Lambda,\mathcal D)\ge\sup_{\substack{x\cd\in W_\infty^{\mathcal D}\\\|Fx\cd\|_{L_\infty(\mathbb R^d)}\le\delta}}\|\Lambda x\cd\|_{L_2(\mathbb R^d)}.
\end{equation}
Let $\wx\cd$ be such that
\begin{equation*}
F\wx(\xi)=\begin{cases}
\delta,&|\psi(\xi)|>\lambda\sqrt{s_2(\xi)},\\
0,&|\psi(\xi)|\le\lambda\sqrt{s_2(\xi)}.
\end{cases}
\end{equation*}

We show that $\lambda>0$ may be selected from the condition
\begin{equation*}
\frac1{(2\pi)^d}\iRd|\varphi_j(\xi)|^2|F\wx(\xi)|^2\,d\xi=1,\ j=1,\ldots,n.
\end{equation*}
Thus, $\lambda>0$ should be chosen from the condition
\begin{equation*}
\delta^2\int_{|\psi(\xi)|>\lambda\sqrt{s_2(\xi)}}|\varphi_j
(\xi)|^2\,d\xi=(2\pi)^d.
\end{equation*}
Passing to the polar transformation for $\nu>\eta$ we obtain
$$\delta^2\int_{\Pi_{d-1}}\wva_j^2(\omega)J(\omega)\,d\omega\int_0^{\Phi_1(\omega)}
\rho^{2\nu+d-1}\,d\rho=(2\pi)^d,\quad
\Phi_1(\omega)=\left(\frac{\wps(\omega)}{\lambda\sqrt{\wc_2(\xi)}}\right)^{\frac1{\nu-\eta}}.$$
If $\nu<\eta$, then $2\nu+d<0$ (since $\wwg\in(0,1)$) and we have
\begin{equation*}
\delta^2\int_{\Pi_{d-1}}\wva_j^2(\omega)J(\omega)\,d\omega\int_{\Phi_1(\omega)}^{+\infty}
\rho^{2\nu+d-1}\,d\rho=(2\pi)^d.
\end{equation*}
Hence
\begin{equation*}
\frac{\delta^2}{|2\nu+d|}\lambda^{-\frac{2\nu+d}{\nu-\eta}}I_j'=(2\pi)^d.
\end{equation*}
As already noted, it follows from the equality $I_1'+\ldots+I_n'=I$ that $I_j'=I/n$, $j=1,\ldots,n$. Consequently,
\begin{equation*}
\lambda=\left(\frac{\delta^2I}{(2\pi)^dn|2\nu+d|}\right)^{\frac{\nu-\eta}{2\nu+d}}.
\end{equation*}
It is easily checked that
\begin{equation*}
C_\infty^2(\nu,\eta)=\frac1{|2\eta+d|}(n|2\nu+d|)^{\frac{\eta+d/2}{\nu+d/2}}.
\end{equation*}
As a result, $\lambda^2=\beta$. In view of \eqref{EDD}, using calculations similar to those that were above, we obtain
\begin{multline}\label{Es}
E^2_{\infty2}(\Lambda,\mathcal D)\ge\|\Lambda\wx\cd\|^2_{\ld}=\frac{\delta^2}{(2\pi)^d}\int_
{|\psi(\xi)|>\lambda\sqrt{s_2(\xi)}}|\psi(\xi)|^2\,d\xi\\
=\frac{\delta^2}{|2\eta+d|(2\pi)^d}\lambda^{-\frac{2\eta+d}{\nu-\eta}}I=\frac1{(2\pi)^{d\wwg}}
C^2_\infty(\nu,\eta)I^{2/\wwq}\delta^{2\wwg}.
\end{multline}

We estimate the error of the method \eqref{mbe}. Put
\begin{equation*}
a(\xi)=\left(1-\beta\frac{s_2(\xi)}{|\psi(\xi)|^2}\right)_+.
\end{equation*}
Taking the Fourier transform we obtain
\begin{equation*}
\|\Lambda x\cd-\wm(y)\cd\|^2_{\ld}\\
=\frac1{(2\pi)^d}\iRd|\psi(\xi)|^2\left|Fx(\xi)-a(\xi)y(\xi)\right|^2\,d\xi.
\end{equation*}
We set $z\cd=Fx\cd-y\cd$ and note that
\begin{equation*}
\|z\cd\|_{L_\infty(\mathbb R^d)}\le\delta,\quad\frac1{(2\pi)^d}\iRd|\varphi_j(\xi)|^2|Fx(\xi)|^2\,d\xi\le1,\ j=1,\ldots,n.
\end{equation*}
Hence
\begin{equation*}
\|\Lambda x\cd-\wm(y)\cd\|^2_{\ld}
=\frac1{(2\pi)^d}\iRd|\psi(\xi)|^2\left|\left(1-a(\xi)\right)Fx(\xi)+
a(\xi)z(\xi)\right|^2\,d\xi.
\end{equation*}
The integrand can be written as
\begin{equation*}
\left|\frac{|\psi(\xi)|(1-a(\xi))\sqrt{\beta s_2(\xi)}Fx(\xi)}{\sqrt{\beta s_2(\xi)}}
+\sqrt{a(\xi)}\sqrt{a(\xi)}|\psi(\xi)|z(\xi)\right|^2.
\end{equation*}
Using the Cauchy-Bunyakovskii-Schwarz inequality
\begin{equation*}
|ab+cd|^2\le(|a|^2+|c|^2)(|b|^2+|d|^2)
\end{equation*}
we obtain the estimate
$$\|\Lambda x\cd-\wm(y)\cd\|^2_{\ld}
\le\vraisup_{\xi\in\mathbb R^d}S(\xi)\frac1{(2\pi)^d}\iRd\left(\beta s_2(\xi)|Fx(\xi)|^2+
a(\xi)|\psi(\xi)|^2|z(\xi)|^2\right)\,d\xi,$$
where
\begin{equation*}
S(\xi)=\frac{|\psi(\xi)|^2|(1-a(\xi))^2}{\beta s_2(\xi)}+a(\xi).
\end{equation*}
If $|\psi(\xi)|^2\le\beta s_2(\xi)$, then $a(\xi)=0$ and $S(\xi)\le1$. If $|\psi(\xi)|^2>\beta s_2(\xi)$, then $S(\xi)=1$. So we have
\begin{multline*}
e^2_{\infty2}(\Lambda,\mathcal D,\wm)
\le\frac1{(2\pi)^d}\iRd\left(\beta s_2(\xi)|Fx(\xi)|^2+a(\xi)|\psi(\xi)|^2|z(\xi)|^2\right)\,d\xi
\le n\beta\\+\frac{\delta^2}{(2\pi)^d}\int_{|\psi(\xi)|>\lambda\sqrt{s_2(\xi)}}\left(|\psi(\xi)|^2-
\beta s_2(\xi)\right)\,d\xi=n\beta
+\frac{\delta^2}{(2\pi)^d}\int_{|\psi(\xi)|>\lambda\sqrt{s_2(\xi)}}|\psi(\xi)|^2\,d\xi\\
-\beta\frac1{(2\pi)^d}\iRd s_2(\xi)|F\wx(\xi)|^2\,d\xi
=\frac{\delta^2}{(2\pi)^d}\int_
{|\psi(\xi)|>\lambda\sqrt{s_2(\xi)}}|\psi(\xi)|^2\,d\xi\le E^2_{\infty2}(\Lambda,\mathcal D).
\end{multline*}
It follows that the method $\wm(y)\cd$ is optimal. Moreover, by \eqref{Es} we have
\begin{equation*}
E^2_{\infty2}(\Lambda,\mathcal D)=\frac{\delta^2}{(2\pi)^d}\int_{|\psi(\xi)|
>\lambda\sqrt{s_2(\xi)}}|\psi(\xi)|^2\,d\xi=\frac1{(2\pi)^{d\wwg}}C^2_\infty(n,k)I^{2/\wwq}
\delta^{2\wwg}.
\end{equation*}

Similar to the proof of Corollary~\ref{Sdva} we prove that for $p=\infty$ inequality \eqref{ene} is sharp .
\end{proof}

Let $\alpha=(\alpha_1,\ldots,\alpha_d)\in\mathbb R_+^d$. We define the operator $D^\alpha$ (the derivative of order $\alpha$) by
\begin{equation*}
D^\alpha x\cd=F^{-1}((i\xi)^\alpha Fx(\xi))\cd,
\end{equation*}
where $(i\xi)^\alpha=(i\xi_1)^{\alpha_1}\ldots(i\xi_d)^{\alpha_d}$.

Consider problem \eqref{ED} for $D_j=D^{\nu e_j}$, $j=1,\ldots,d$, where $e_j$, $j=1\ldots,d$, is a standard basis in $\mathbb R^d$, and $\Lambda$ defined by \eqref{Lam}. Assume that $\psi\cd$ has the following symmetry property
$$\psi(\ldots,\xi_j,\ldots,\xi_m,\ldots)=\psi(\ldots,\xi_m,\ldots,\xi_j,\ldots),\quad1\le j,m\le d.$$
Moreover, we assume that $\wps\cd$ is continuous function on $\Pi^{d-1}$.

In this case for \eqref{Ieq} and \eqref{Ieq1} we have
\begin{align}
I&=\int_{\Pi^{d-1}}\frac{\wps^{\wwq}(\omega)J(\omega)\,d\omega}{\left(\sum_{k=1}^d\wtt_k^{\,2\nu}(\omega)\right)
^{\wwq(1-\wwg)/2}},\label{I8}\\
I_j'&=\int_{\Pi^{d-1}}\frac{\wps^{\wwq}(\omega)\wtt_j^{\,2\nu}(\omega)J(\omega)\,d\omega}
{\left(\sum_{k=1}^d\wtt_k^{\,2\nu}(\omega)\right)^{\wwq(1-\wwg)/2+1}},\ j=1,\ldots,d.\notag
\end{align}
Similar to how it was done for weights \eqref{wei} we prove that $I<\infty$ and $I_1'=\ldots=I_d'$. Thus, from Theorem~\ref{pred1} we obtain

\begin{corollary}\label{Cor8}
Let $2<p\le\infty$ and $\nu>\eta\ge0$. Then
\begin{equation*}
E_{p2}(\Lambda,(D^{\nu e_1},\ldots,D^{\nu e_d}))=\frac1{(2\pi)^{d\wwg/2}}C_p(\nu,\eta)I^{1/\wwq}\delta^{\wwg},
\end{equation*}
where $I$ is defined by \eqref{I8}. The method
\begin{equation*}
\wm(y)(t)=F^{-1}\left(\left(1-\beta\frac{\sum_{j=1}^d|t_j|^{2\nu}}
{|\psi(t)|^2}\right)_+\psi(t)y(t)\right),
\end{equation*}
where
\begin{equation*}
\beta=\frac{1-\wwg}{d(2\pi)^{d\wwg}}C_p^2(\nu,\eta)\left(\delta I^{1/2-1/p}\right)^{2\wwg},
\end{equation*}
is optimal.

The sharp inequality
$$\|\Lambda x\cd\|_{\ld}\\
\le\frac1{(2\pi)^{d\wwg/2}}C_p(\nu,\eta)I^{1/\wwq}\|Fx\cd\|_{\lpd}^{\wwg}
\left(\max_{1\le j\le d}\|D^{\nu e_j}x\cd\|_{\ld}\right)^{1-\wwg}$$
holds.
\end{corollary}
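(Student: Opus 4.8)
The plan is to read off Corollary~\ref{Cor8} from Theorem~\ref{pred1} in the special case $n=d$, $\varphi_j(\xi)=(i\xi_j)^\nu$. First I would note that with this choice $|\varphi_j\cd|$ is homogenous of degree $\nu$, $\wva_j(\omega)=|\wtt_j(\omega)|^\nu$, and $\wc_2(\omega)=\sum_{k=1}^d\wtt_k^{\,2\nu}(\omega)$, so that the quantities \eqref{Ieq} and \eqref{Ieq1} collapse precisely to the integral $I$ in \eqref{I8} and to $I_j'=\int_{\Pi^{d-1}}\wps^{\wwq}(\omega)\wtt_j^{\,2\nu}(\omega)J(\omega)\bigl(\sum_{k=1}^d\wtt_k^{\,2\nu}(\omega)\bigr)^{-\wwq(1-\wwg)/2-1}\,d\omega$. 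Granting the hypotheses of Theorem~\ref{pred1}, its formula \eqref{kpr1} for $E_{p2}(\Lambda,\mathcal D)$, its optimal method \eqref{mbe} with $s_2(t)=\sum_{j=1}^d|t_j|^{2\nu}$ and $n=d$, and its sharp inequality become verbatim the three assertions of the corollary. So the work is confined to checking those hypotheses: $\wwg\in(0,1)$, $I<\infty$, and $I_1'=\ldots=I_d'$.

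The condition $\wwg\in(0,1)$ I would dispose of at once: for $2<p\le\infty$ one has $d(1/2-1/p)>0$, and $\nu>\eta\ge0$ gives $0<\nu-\eta<(\nu-\eta)+d(1/2-1/p)$, so $\wwg=(\nu-\eta)/(\nu+d(1/2-1/p))\in(0,1)$. For $I<\infty$ I would invoke the standing assumption that $\wps$ is continuous on the compact cube $\Pi^{d-1}$, hence bounded, which leaves only the need for a uniform positive lower bound on $\wc_2(\omega)=\sum_{k=1}^d\wtt_k^{\,2\nu}(\omega)$; this is exactly the estimate behind \eqref{etd1}--\eqref{etd2}, since $\sum_{k=1}^d\wtt_k^{\,2}(\omega)=1$ yields $\wc_2(\omega)\ge1$ when $2\nu\le2$ and $\wc_2(\omega)\ge d^{1-\nu}$ when $2\nu>2$ by H\"older's inequality. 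The same passage then also gives finiteness of each $I_j'$.

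For the equality $I_1'=\ldots=I_d'$ I would argue as for the weights \eqref{wei}. The transposition of the Cartesian coordinates $\xi_j$ and $\xi_m$ is an isometry of the unit sphere, hence preserves the angular measure $J(\omega)\,d\omega$; it interchanges $|\wtt_j(\omega)|$ with $|\wtt_m(\omega)|$, leaves $\sum_{k=1}^d\wtt_k^{\,2\nu}(\omega)$ fixed, and leaves $\wps$ fixed by the postulated symmetry $\psi(\ldots,\xi_j,\ldots,\xi_m,\ldots)=\psi(\ldots,\xi_m,\ldots,\xi_j,\ldots)$; so the change of variables given by this transposition carries the integral for $I_j'$ into that for $I_m'$. (Equivalently, one lifts the $I_j'$ to Cartesian integrals over $\mathbb B^d$ and permutes coordinates there, exactly as in the treatment of \eqref{wei} preceding the corollary.) Then $I_1'+\ldots+I_d'=I$ forces $I_j'=I/d$, all hypotheses of Theorem~\ref{pred1} hold, and its conclusions (its main part for $p<\infty$ together with its $p=\infty$ part) are precisely the statements of the corollary. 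I expect this symmetry-based identity $I_1'=\ldots=I_d'$ to be the only point demanding genuine care, as it is the sole place where the hypothesis on $\psi$ enters and where one must make sure the coordinate change respects both the integrand $\wps$ and the measure $J(\omega)\,d\omega$; everything else is direct substitution into Theorem~\ref{pred1}.
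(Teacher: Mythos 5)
Your proposal is correct and follows essentially the same route as the paper: the paper also obtains Corollary~\ref{Cor8} by specializing Theorem~\ref{pred1} to $\varphi_j(\xi)=(i\xi_j)^\nu$, checking $I<\infty$ via the bounds \eqref{etd1}--\eqref{etd2} (with $r\theta_1$ replaced by $2\nu$), and establishing $I_1'=\ldots=I_d'$ by the same coordinate-permutation symmetry argument used for the weights \eqref{wei}, which relies on the assumed symmetry and continuity of $\psi$.
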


As functions $\psi\cd$ defining the operator $\Lambda$ we can consider the functions
\begin{equation*}
\psi_\theta(\xi)=(|\xi_1|^\theta+\ldots+|\xi_d|^\theta)^{2/\theta},\quad\theta>0.
\end{equation*}
The corresponding operator is denoted by $\Lambda_\theta$. In particular, $\Lambda_2=-\Delta$, where $\Delta$ is the Laplace operator. We denote by $\Lambda_\theta^{\eta/2}$ the operator $\Lambda$ which is defined by $\psi\cd=\psi_\theta^{\eta/2}\cd$.

Now we consider the case when $p=2$.

\begin{theorem}
Let $\nu>\eta>0$, $\nu\ge1$, and $0<\theta\le2\nu$. Then
\begin{equation}\label{E}
E_{22}(\Lambda_\theta^{\eta/2},(D^{\nu e_1},\ldots,D^{\nu e_d}))=d^{\eta/\theta}\left(\frac{\delta}{(2\pi)^{d/2}}\right)^{1-\eta/\nu},
\end{equation}
and all methods
\begin{equation}\label{mma}
\wm(y)(t)=F^{-1}\left(a(t)\psi_\theta^{\eta/2}(t)y(t)\right),
\end{equation}
where $a\cd$ are measurable functions satisfying the condition
\begin{equation}\label{aa}
\psi_\theta^\eta(\xi)\left(\frac{|1-a(\xi)|^2}{\lambda_2\sum_{j=1}^d|\xi_j|
^{2\nu}}+\frac{|a(\xi)|^2}
{(2\pi)^d\lambda_1}\right)\le1,
\end{equation}
in which
\begin{equation*}
\lambda_1=\frac{d^{2\eta/\theta}}{(2\pi)^d}\left(1-\frac\eta\nu\right)\left(\frac{(2\pi)^d}{\delta^2}
\right)^{\eta/\nu},
\quad\lambda_2=\frac\eta\nu d^{2\eta/\theta-1}\left(\frac{(2\pi)^d}{\delta^2}
\right)^{\eta/\nu-1},
\end{equation*}
are optimal.

The sharp inequality
\begin{equation}\label{Sl2}
\|\Lambda_\theta^{\eta/2}x\cd\|_{\ld}
\le\frac{d^{\eta/\theta}}{(2\pi)^{d(1-\eta/\nu)/2}}\|Fx\cd\|_{\ld}^{\eta/\nu}
\left(\max_{1\le j\le d}\|D^{\nu e_j}x\cd\|_{\ld}\right)^{1-\eta/\nu}
\end{equation}
holds.
\end{theorem}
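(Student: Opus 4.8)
In the notation of section~\ref{dif} this is the case $p=q=r=2$, so $(p,q,r)\notin P\cup P_1\cup P_2$ and Theorems~\ref{T1}--\ref{T3} do not apply; instead I would argue directly, following the treatment of $p=\infty$ in the proof of Theorem~\ref{pred1}. Write $\mathcal D=(D^{\nu e_1},\ldots,D^{\nu e_d})$. The first step is Plancherel's theorem: for a method $\wm$ given by \eqref{mma} with multiplier $a\cd$ and admissible $x\cd,y\cd$,
\begin{equation*}
\|\Lambda_\theta^{\eta/2}x\cd-\wm(y)\cd\|_{\ld}^2=\frac1{(2\pi)^d}\iRd\psi_\theta^\eta(\xi)\bigl|Fx(\xi)-a(\xi)y(\xi)\bigr|^2\,d\xi,
\end{equation*}
together with $\|D^{\nu e_j}x\cd\|_{\ld}^2=(2\pi)^{-d}\iRd|\xi_j|^{2\nu}|Fx(\xi)|^2\,d\xi$ and $\|Fx\cd\|_{\ld}^2=\iRd|Fx(\xi)|^2\,d\xi$; thus the problem reduces to a weighted $L_2$-recovery for the multiplier $\psi_\theta^{\eta/2}$ subject to $\iRd|\xi_j|^{2\nu}|Fx|^2\,d\xi\le(2\pi)^d$.

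For the lower bound I would invoke Lemma~\ref{L1}, which gives $E_{22}(\Lambda_\theta^{\eta/2},\mathcal D)\ge\sup\|\Lambda_\theta^{\eta/2}x\cd\|_{\ld}$ over $x\cd$ with $\|Fx\cd\|_{\ld}\le\delta$ and $\|D^{\nu e_j}x\cd\|_{\ld}\le1$, $j=1,\ldots,d$. Put $c_*=\bigl((2\pi)^d/\delta^2\bigr)^{1/(2\nu)}$, the value at which both constraints become simultaneously active, and take $x_N\cd$ whose Fourier transforms are nonnegative bumps of squared $L_2$-norm $\delta^2$ supported in shrinking neighbourhoods of the $2^d$ points $(\pm c_*,\ldots,\pm c_*)$. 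As the supports contract, $|\xi_j|\to c_*$ and $\psi_\theta^\eta(\xi)\to\psi_\theta^\eta(c_*,\ldots,c_*)=(d c_*^{\,\theta})^{2\eta/\theta}$ uniformly on them, so $\|D^{\nu e_j}x_N\cd\|_{\ld}^2\to(2\pi)^{-d}c_*^{\,2\nu}\delta^2=1$ and $\|\Lambda_\theta^{\eta/2}x_N\cd\|_{\ld}^2\to(2\pi)^{-d}(d c_*^{\,\theta})^{2\eta/\theta}\delta^2$, which a direct computation identifies with $d^{2\eta/\theta}\bigl(\delta/(2\pi)^{d/2}\bigr)^{2(1-\eta/\nu)}$. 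After rescaling $x_N\cd$ by a factor tending to $1$ to make $\|D^{\nu e_j}x_N\cd\|_{\ld}\le1$ hold exactly, this yields $E_{22}(\Lambda_\theta^{\eta/2},\mathcal D)\ge d^{\eta/\theta}\bigl(\delta/(2\pi)^{d/2}\bigr)^{1-\eta/\nu}$. (One need not verify that $c_*$ beats every other limiting configuration, since the upper bound below matches.)

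For the upper bound I would take any measurable $a\cd$ satisfying \eqref{aa} and estimate the error of the corresponding method \eqref{mma}. With $z\cd=Fx\cd-y\cd$ (so $\|z\cd\|_{\ld}\le\delta$) and $Fx-ay=(1-a)Fx+az$, the identity above and the inequality $|AB+CD|^2\le(|A|^2+|C|^2)(|B|^2+|D|^2)$ --- applied with $AB=\psi_\theta^{\eta/2}(1-a)Fx$, $CD=\psi_\theta^{\eta/2}az$ and the normalisers $\bigl(\lambda_2\sum_{j=1}^d|\xi_j|^{2\nu}\bigr)^{1/2}$, $\bigl((2\pi)^d\lambda_1\bigr)^{1/2}$ --- give
\begin{equation*}
\psi_\theta^\eta|(1-a)Fx+az|^2\le S(\xi)\Bigl(\lambda_2\sum_{j=1}^d|\xi_j|^{2\nu}|Fx|^2+(2\pi)^d\lambda_1|z|^2\Bigr),
\end{equation*}
where $S(\xi)=\psi_\theta^\eta(\xi)\bigl(|1-a(\xi)|^2(\lambda_2\sum_j|\xi_j|^{2\nu})^{-1}+|a(\xi)|^2((2\pi)^d\lambda_1)^{-1}\bigr)\le1$ by \eqref{aa}. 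Integrating and using $\iRd|\xi_j|^{2\nu}|Fx|^2\,d\xi=(2\pi)^d\|D^{\nu e_j}x\cd\|_{\ld}^2\le(2\pi)^d$ and $\iRd|z|^2\,d\xi\le\delta^2$ gives $e_{22}^2(\Lambda_\theta^{\eta/2},\mathcal D,\wm)\le\lambda_1\delta^2+d\lambda_2$; inserting the prescribed $\lambda_1,\lambda_2$ turns the right-hand side into exactly $d^{2\eta/\theta}\bigl(\delta/(2\pi)^{d/2}\bigr)^{2(1-\eta/\nu)}$, so together with the lower bound this proves \eqref{E} and the optimality of every such $\wm$.

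It remains to see that the class \eqref{aa} is nonempty and to deduce \eqref{Sl2}. For the former, take $a(\xi)=(2\pi)^d\lambda_1\bigl((2\pi)^d\lambda_1+\lambda_2\sum_j|\xi_j|^{2\nu}\bigr)^{-1}$; minimising the bracket in \eqref{aa} over $a$ gives the value $\bigl((2\pi)^d\lambda_1+\lambda_2\sum_j|\xi_j|^{2\nu}\bigr)^{-1}$, so \eqref{aa} holds for this $a$ precisely when $\psi_\theta^\eta(\xi)\le(2\pi)^d\lambda_1+\lambda_2\sum_{j=1}^d|\xi_j|^{2\nu}$ for all $\xi$. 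Since $\theta\le2\nu$, the power-mean inequality gives $\sum_j|\xi_j|^{2\nu}\ge d^{\,1-2\nu/\theta}\bigl(\sum_j|\xi_j|^\theta\bigr)^{2\nu/\theta}$, so it is enough to check $(2\pi)^d\lambda_1+\lambda_2 d^{\,1-2\nu/\theta}\sigma^{2\nu/\theta}\ge\sigma^{2\eta/\theta}$ for all $\sigma\ge0$; as $0<2\eta/\theta<2\nu/\theta$, the difference of the two sides has a unique interior critical point, and with the prescribed $\lambda_1,\lambda_2$ its value there is $0$. This one-variable Young-type extremal problem --- which is exactly what pins down $\lambda_1$ and $\lambda_2$ so that the upper and lower bounds coincide --- is the crux of the argument; the remaining steps (Plancherel, the Cauchy--Bunyakovskii--Schwarz split, the bump construction) are routine. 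Finally, \eqref{Sl2} follows from \eqref{E} by the normalisation argument of Corollary~\ref{Sdva}: for $x\cd\ne0$, apply the recovery estimate to $x\cd/A$ with $A=\max_{1\le j\le d}\|D^{\nu e_j}x\cd\|_{\ld}$ and $\delta=\|F(x/A)\cd\|_{\ld}$.
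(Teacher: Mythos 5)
Your proposal is correct and follows essentially the same route as the paper's own proof: Plancherel reduction, the lower bound via Lemma~\ref{L1} with Fourier bumps concentrating at the corner point $\bigl((2\pi)^d/\delta^2\bigr)^{1/(2\nu)}(1,\ldots,1)$ where both constraints are simultaneously active, the Cauchy--Bunyakovskii--Schwarz split with the prescribed $\lambda_1,\lambda_2$ giving $e^2\le\lambda_1\delta^2+d\lambda_2$, nonemptiness of the class \eqref{aa} via the same explicit $a\cd$, the power-mean inequality and the one-variable minimization vanishing at $\rho_0$, and the normalization argument of Corollary~\ref{Sdva} for \eqref{Sl2}. The only cosmetic difference is that you place bumps at the $2^d$ symmetric points $(\pm c_*,\ldots,\pm c_*)$ and rescale, while the paper uses a single bump shifted slightly into the positive orthant; this changes nothing.
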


\begin{proof}
It follows by Lemma~\ref{L1} that
\begin{equation}\label{EDD2}
E_{22}(\Lambda_\theta^{\eta/2},(D^{\nu e_1},\ldots,D^{\nu e_d}))\ge\sup_{\substack{x\cd\in W_2^{(D^{\nu e_1},\ldots,D^{\nu e_d})}\\\|Fx\cd\|_{L_2(\mathbb R^d)}\le\delta}}\|\Lambda_\theta^{\eta/2}x\cd\|_{L_2(\mathbb R^d)}.
\end{equation}
Given $0<\varepsilon<(2\pi)^{d/(2\nu)}\delta^{-1/\nu}$, we set
\begin{equation*}
\wxi_\varepsilon=\left(\frac{(2\pi)^d}{\delta^2}\right)^{\frac1{2\nu}}(1,\ldots,1)-(\varepsilon,
\ldots,\varepsilon),\quad B_\varepsilon=\{\xi\in\mathbb R^d:|\xi-\wxi_\varepsilon|<\varepsilon\,\}.
\end{equation*}
Consider a function $x_\varepsilon\cd$ such that
\begin{equation}\label{xeps}
Fx_\varepsilon(\xi)=\begin{cases}\dfrac\delta{\sqrt{\mes B_\varepsilon}},&\xi\in B_\varepsilon,\\
0,&\xi\notin B_\varepsilon.\end{cases}
\end{equation}
Then $\|Fx_\varepsilon\cd\|^2_{\ld}=\delta^2$ and
\begin{equation*}
\|D^{\nu e_j}x_\varepsilon\cd\|^2_{\ld}
=\frac{\delta^2}{(2\pi)^d\mes B_\varepsilon}\int_{B_\varepsilon}|\xi_j|^{2\nu}\,d\xi
\le1,\ j=1,\ldots,d.
\end{equation*}
By virtue of \eqref{EDD2} we have
\begin{multline*}
E_{22}^2(\Lambda_\theta^{\eta/2},(D^{\nu e_1},\ldots,D^{\nu e_d}))\ge\|\Lambda_\theta^{\eta/2}x_\varepsilon\cd\|_{L_2(\mathbb R^d)}^2\\
=\frac{\delta^2}{(2\pi)^d\mes B_\varepsilon}\int_{B_\varepsilon}\psi_\theta^\eta(\xi)\,d\xi
=\frac{\delta^2}{(2\pi)^d}\psi_\theta^\eta(\widetilde\xi_\varepsilon),\quad
\widetilde\xi_\varepsilon\in B_\varepsilon.
\end{multline*}
Letting $\varepsilon\to0$ we obtain the estimate
\begin{equation}\label{EE}
E_{22}^2(\Lambda_\theta^{\eta/2},(D^{\nu e_1},\ldots,D^{\nu e_d}))\ge d^{2\eta/\theta}\left(\frac{\delta^2}{(2\pi)^d}\right)^{1-\eta/\nu}.
\end{equation}

We will find optimal methods among methods \eqref{mma}. Passing to the Fourier transform we have
\begin{equation*}
\|\Lambda_\theta^{\eta/2}x\cd-\wm(y)\cd\|^2_{\ld}\\
=\frac1{(2\pi)^d}\iRd\psi_\theta^\eta(\xi)\left|Fx(\xi)-a(\xi)y(\xi)\right|^2\,d\xi.
\end{equation*}
We set $z\cd=Fx\cd-y\cd$ and note that
\begin{equation*}
\iRd|z(\xi)|^2\,d\xi\le\delta^2,\quad\frac1{(2\pi)^d}\iRd|\xi_j|^{2\nu}|Fx(\xi)|^2\,d\xi\le1,
\ j=1,\ldots,d.
\end{equation*}
Then
\begin{equation*}
\|\Lambda_\theta^{\eta/2}x\cd-\wm(y)\cd\|^2_{\ld}\\
=\frac1{(2\pi)^d}\iRd\psi_\theta^\eta(\xi)\left|\left(1-a(\xi)\right)Fx(\xi)+
a(\xi)z(\xi)\right|^2\,d\xi.
\end{equation*}
We write the integrand as
\begin{equation*}
\psi_\theta^\eta(\xi)\left|\frac{(1-a(\xi))\sqrt{\lambda_2}\left(\sum_{j=1}^d|\xi_j|
^{2\nu}\right)^{1/2}Fx(\xi)}{\sqrt{\lambda_2}\left(\sum_{j=1}^d|\xi_j|
^{2\nu}\right)^{1/2}}+
\frac{a(\xi)}{(2\pi)^{d/2}\sqrt{\lambda_1}}(2\pi)^{d/2}\sqrt{\lambda_1}z(\xi)\right|^2.
\end{equation*}
Applying the Cauchy-Bunyakovskii-Schwarz inequality we obtain the estimate
\begin{multline*}
\|\Lambda_\theta^{\eta/2}x\cd-\wm(y)\cd\|^2_{\ld}\\
\le\vraisup_{\xi\in\mathbb R^d}S(\xi)\frac1{(2\pi)^d}\iRd\left(\lambda_2\sum_{j=1}^d|\xi_j|
^{2\nu}|Fx(\xi)|^2+
(2\pi)^d\lambda_1|z(\xi)|^2\right)\,d\xi,
\end{multline*}
where
\begin{equation*}
S(\xi)=\psi_\theta^\eta(\xi)\left(\frac{|1-a(\xi)|^2}{\lambda_2\sum_{j=1}^d|\xi_j|
^{2\nu}}+\frac{|a(\xi)|^2}{(2\pi)^d\lambda_1}\right).
\end{equation*}
If we assume that $S(\xi)\le1$ for almost all $\xi$, then taking into account \eqref{EE}, we get
\begin{multline*}
e^2_{22}(\Lambda_\theta^{\eta/2},(D^{\nu e_1},\ldots,D^{\nu e_d}),\wm)
\le\frac1{(2\pi)^d}\iRd\left(\lambda_2\sum_{j=1}^d|\xi_j|
^{2\nu}|Fx(\xi)|^2+
(2\pi)^d\lambda_1|z(\xi)|^2\right)\,d\xi\\
\le\lambda_2d+\lambda_1\delta^2=d^{2\eta/\theta}\left(\frac{\delta^2}{(2\pi)^d}\right)^{1-\eta/\nu}\le E_{22}^2(\Lambda_\theta^{\eta/2},(D^{\nu e_1},\ldots,D^{\nu e_d})).
\end{multline*}
This proves \eqref{E} and shows that the methods under consideration are optimal.

It remains to verify that the set of functions $a\cd$ satisfying \eqref{aa} is nonempty. Put
\begin{equation*}
a(\xi)=\frac{(2\pi)^d\lambda_1}{(2\pi)^d\lambda_1+\lambda_2\sum_{j=1}^d|\xi_j|
^{2\nu}}.
\end{equation*}
Then
\begin{equation*}
S(\xi)=\frac{\psi_\theta^\eta(\xi)}{(2\pi)^d\lambda_1+\lambda_2\sum_{j=1}^d|\xi_j|
^{2\nu}}.
\end{equation*}
Since $\theta\le2\nu$ by H\"older's inequality
\begin{equation*}
\sum_{j=1}^d|\xi_j|^\theta\le\biggl(\sum_{j=1}^d|\xi_j|^{2\nu}\biggr)^{\theta/(2\nu)}
d^{1-\theta/(2\nu)}.
\end{equation*}
Putting $\rho=(|\xi_1|^\theta+\ldots+|\xi_d|^\theta)^{1/\theta}$, we obtain
\begin{equation*}
\sum_{j=1}^d|\xi_j|^{2\nu}\ge\rho^{2\nu}d^{1-2\nu/\theta}.
\end{equation*}
Thus,
\begin{equation*}
S(\xi)\le\frac{\rho^{2\eta}}{(2\pi)^d\lambda_1+\lambda_2\rho^{2\nu}d^{1-2\nu/\theta}}.
\end{equation*}
It is easily checked that the function $f(\rho)=(2\pi)^d\lambda_1+\lambda_2\rho^{2\nu}d^{1-2\nu/\theta}-\rho^{2\eta}$ reaches a minimum on  $[0,+\infty)$ at
\begin{equation*}
\rho_0=d^{1/\theta}\left(\frac{(2\pi)^d}{\delta^2}
\right)^{1/(2\nu)}.
\end{equation*}
Moreover, $f(\rho_0)=0$. Consequently, $f(\rho)\ge0$ for all $\rho\ge0$. Hence $S(\xi)\le1$ for all $\xi$.

Inequality \eqref{Sl2} is proved by the analogy with the proof of Corollary~\ref{Sdva}.
\end{proof}

\subsection{Recovery in the metric $L_\infty(\mathbb R^d)$}

Put
\begin{equation*}
\begin{gathered}
\gamma_1=\frac{\nu-\eta-d/2}{\nu+d(1/2-1/p)},\quad q_1=\frac1{1/2+\gamma_1(1/2-1/p)},\\
\wCC_p(\nu,\eta)=\gamma_1^{-\frac{\gamma_1}p}
\left(\frac{1-\gamma_1}n\right)^{-\frac{1-\gamma_1}2}\Biggl(\frac{B\left(q_1\gamma_1/p+1,
q_1(1-\gamma_1)/2\right)}{2|\nu-\eta-d/2|}\Biggr)^{1/q_1}.
\end{gathered}
\end{equation*}
For $1<p<\infty$ we define $k\cd$ by the equality
\begin{equation*}
\frac{k(t)}{(1-k(t))^{p-1}}=(2\pi)^d\frac{|\psi(t)|^{p-2}}{s_2^{p-1}(t)}.
\end{equation*}
We set
\begin{equation*}
k(t)=\begin{cases}\displaystyle
\min\left\{1,(2\pi)^d|\psi(t)|^{-1}\right\},&p=1,\\
\left(1-s_2(t)|\psi(t)|^{-1}\right)_+,&p=\infty.\end{cases}
\end{equation*}

\begin{theorem}\label{pred11}
Let $1\le p\le\infty$, $\gamma_1\in(0,1)$. Assume that
\begin{equation*}
I=\int_{\Pi^{d-1}}\frac{\wps^{q_1}(\omega)}{\wc_2^{\,q_1(1-\gamma_1)/2}(\omega)}J(\omega)
\,d\omega<\infty
\end{equation*}
and $I'_1=\ldots=I'_n$, where
\begin{equation*}
I'_j=\int_{\Pi^{d-1}}\frac{\wps^{q_1}(\omega)\wva_j^2(\omega)}
{\wc_2^{\,q_1(1-\gamma_1)/2+1}(\omega)}J(\omega)\,d\omega,\ j=1,\ldots,n.
\end{equation*}
Then
\begin{equation*}
E_{p\infty}(\Lambda,\mathcal D)= \frac1{(2\pi)^{d(1+\gamma_1)/2}}\wCC_p(\nu,\eta)I^{1/q_1}\delta^{\gamma_1}.
\end{equation*}
The method
\begin{equation*}
\wm(y)(t)=F^{-1}\left(k\left(\xi_1^{\frac1{n+d(1/2-1/p)}}t\right)\psi(t)y(t)\right),
\end{equation*}
where
\begin{equation*}
\xi_1=\delta\gamma_1^{-\frac{q_1}{2p}}\left(\frac{(1-\gamma_1)
\wCC_p(\nu,\eta)I^{1/q_1}}{n(2\pi)^{d(1+\gamma_1)/2}}\right)^{q_1(1/2-1/p)},
\end{equation*}
is optimal.

The sharp inequality
\begin{equation}\label{ene1}
\|\Lambda x\cd\|_{L_\infty(\mathbb R^d)}
\le\frac1{(2\pi)^{d(1+\gamma_1)/2}}\wCC_p(\nu,\eta)I^{1/q_1}\|Fx\cd\|_{\lpd}^{\gamma_1}
\left(\max_{1\le j\le n}\|D_jx\cd\|_{\ld}\right)^{1-\gamma_1}
\end{equation}
holds.
\end{theorem}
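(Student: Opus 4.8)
The plan is to reduce the recovery of $\Lambda x\cd=F^{-1}(\psi\cd Fx\cd)\cd$ in the metric $L_\infty(\mathbb R^d)$ to the recovery of the multiplication operator $x\cd\mapsto\psi\cd x\cd$ in the $L_1(\mathbb R^d)$-metric, which is covered by Sections~\ref{res}--\ref{rd}. The link is the elementary bound $\|F^{-1}g\cd\|_{L_\infty(\mathbb R^d)}\le(2\pi)^{-d}\|g\cd\|_{L_1(\mathbb R^d)}$, which becomes an equality when $g\cd\ge0$ (both sides are then $(2\pi)^{-d}\iRd g(\xi)\,d\xi$, attained at the origin). Together with Plancherel's identity $\|D_jx\cd\|_{\ld}=(2\pi)^{-d/2}\|\varphi_j\cd Fx\cd\|_{\ld}$ this should yield
\begin{equation*}
E_{p\infty}(\Lambda,\mathcal D)=\frac1{(2\pi)^d}E(p,1,2),
\end{equation*}
where $E(p,1,2)$ denotes the optimal recovery error \eqref{Ep} for $q=1$, $r=2$ on $T=\mathbb R^d$, $d\mu(t)=dt$, with weight $\psi\cd$ and weights $(2\pi)^{-d/2}\varphi_j\cd$, $j=1,\ldots,n$.

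For the upper bound I would take the optimal method for $E(p,1,2)$ produced by Theorem~\ref{T3}; it is a Fourier multiplier, $y\cd\mapsto a\cd\psi\cd y\cd$, with the explicit (radial-in-$\xi$) multiplier $a\cd$ appearing there. Setting $\wm(y)\cd=F^{-1}(a\cd\psi\cd y\cd)\cd$ gives $\Lambda x\cd-\wm(y)\cd=F^{-1}(\psi\cd Fx\cd-a\cd\psi\cd y\cd)\cd$, so $\|\Lambda x\cd-\wm(y)\cd\|_{L_\infty(\mathbb R^d)}\le(2\pi)^{-d}\|\psi\cd Fx\cd-a\cd\psi\cd y\cd\|_{L_1(\mathbb R^d)}$; taking the supremum over admissible $x\cd,y\cd$ and rewriting the constraints $\|D_jx\cd\|_{\ld}\le1$ as $\|(2\pi)^{-d/2}\varphi_j\cd Fx\cd\|_{\ld}\le1$ gives $e_{p\infty}(\Lambda,\mathcal D,\wm)\le(2\pi)^{-d}E(p,1,2)$. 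For the lower bound I would use Lemma~\ref{L1}: let $\wx\cd\ge0$ be the extremal function of the $L_1$-problem (it can be taken nonnegative, since $\|\psi\cd x\cd\|_{L_1}=\||\psi\cd|\,|x\cd|\|_{L_1}$), so that $\|\wx\cd\|_{\lpd}\le\delta$, $\|(2\pi)^{-d/2}\varphi_j\cd\wx\cd\|_{\ld}\le1$, and $\||\psi\cd|\wx\cd\|_{L_1(\mathbb R^d)}=E(p,1,2)$ by the sharpness \eqref{Edv} established in Theorem~\ref{T3}. Put $Fx(\xi)=\ov{\psi(\xi)}|\psi(\xi)|^{-1}\wx(\xi)$, which is well defined for a.e.\ $\xi$ because $\psi(\xi)\ne0$ a.e.; since $|Fx\cd|=\wx\cd$, all constraints pass to $x\cd$, while $\psi\cd Fx\cd=|\psi\cd|\wx\cd\in L_1(\mathbb R^d)$, so $\Lambda x\cd=F^{-1}(|\psi\cd|\wx\cd)\cd$ is continuous and $\Lambda x(0)=(2\pi)^{-d}\iRd|\psi(\xi)|\wx(\xi)\,d\xi=(2\pi)^{-d}E(p,1,2)$; hence $\|\Lambda x\cd\|_{L_\infty(\mathbb R^d)}\ge(2\pi)^{-d}E(p,1,2)$, and the two bounds match.

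It then remains to evaluate $E(p,1,2)$ by Theorem~\ref{T3}: for $1<p<\infty$ this is the branch $(p,1,2)\in P$, and for $p=1$ the branch $(1,1,2)\in P_2$; the parameters $\gamma$ and $q^*$ from \eqref{gam}, \eqref{qu} specialize exactly to $\gamma_1$ and $q_1$, and the hypothesis $\gamma_1\in(0,1)$ is precisely the admissibility condition. One then has to carry the powers of $2\pi$ introduced by the replacement $\varphi_j\cd\mapsto(2\pi)^{-d/2}\varphi_j\cd$ — this multiplies $\wc_2$ by $(2\pi)^{-d}$ and hence $I$ by $(2\pi)^{dq_1(1-\gamma_1)/2}$ — and to use the beta-function identity $B(\alpha,\beta)=B(\alpha+1,\beta)(\alpha+\beta)/\alpha$, exactly as in \eqref{bf}, to pass from $B(q_1\gamma_1/p,q_1(1-\gamma_1)/2)$ to $B(q_1\gamma_1/p+1,q_1(1-\gamma_1)/2)$. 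Assembling these yields $E_{p\infty}(\Lambda,\mathcal D)=(2\pi)^{-d(1+\gamma_1)/2}\wCC_p(\nu,\eta)I^{1/q_1}\delta^{\gamma_1}$, the stated optimal method (with the scaling constant $\xi_1$ read off from the parameter $\wxi$ of Theorem~\ref{T3}), and, by the argument of Corollaries~\ref{Sdva} and \ref{Sdva1}, the sharp inequality \eqref{ene1}.

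The main obstacle is the endpoint $p=\infty$, which Theorem~\ref{T3} does not cover (its proof rests on Lemma~\ref{L3}, valid only for $p<\infty$); this case has to be handled directly, in complete analogy with the $p=\infty$ part of the proof of Theorem~\ref{pred1}. For the lower bound one takes $F\wx(\xi)=\delta\,\ov{\psi(\xi)}|\psi(\xi)|^{-1}$ on $\{\,|\psi(\xi)|^2>\lambda^2 s_2(\xi)\,\}$ and $0$ elsewhere, with $\lambda>0$ fixed by the normalization $\|D_j\wx\cd\|_{\ld}=1$ (using $I_1'=\cdots=I_n'=I/n$), and computes $\Lambda\wx(0)$. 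For the upper bound one uses the multiplier method $\wm(y)\cd=F^{-1}\bigl((1-\beta s_2\cd/|\psi\cd|^2)_+\psi\cd y\cd\bigr)\cd$ and estimates $\|\psi(1-a)Fx+a\psi z\|_{L_1(\mathbb R^d)}$ by splitting it into $\iRd|\psi|(1-a)|Fx|\,d\xi$ and $\iRd|\psi|a|z|\,d\xi$, bounding the first via the Cauchy--Bunyakovskii--Schwarz inequality against $\bigl(\iRd s_2|Fx|^2\,d\xi\bigr)^{1/2}\le(n(2\pi)^d)^{1/2}$ and the second by $\delta\iRd a|\psi|\,d\xi$. The delicate points there are that the integrals $\int_{|\psi|^2>\beta s_2}\beta^2 s_2|\psi|^{-2}$, $\int_{|\psi|^2\le\beta s_2}|\psi|^2 s_2^{-1}$, and $\int_{|\psi|^2>\beta s_2}(1-\beta s_2|\psi|^{-2})|\psi|$ converge — which follows from $I<\infty$ after passing to polar coordinates — and that for the correct value of $\beta$ the resulting upper and lower estimates coincide, which gives optimality.
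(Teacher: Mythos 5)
For $1\le p<\infty$ your argument is essentially the paper's: the lower bound via phase alignment (the paper modulates a general $x\cd$, you evaluate at $t=0$ a nonnegative extremizer, which is the same idea), the identification of the problem with $E(p,1,2)$ for the weights $(2\pi)^{-d/2}\varphi_j\cd$ and a rescaled $\psi\cd$, the use of Theorem~\ref{T3} (branch $P$ for $1<p<\infty$, branch $P_2$ for $p=1$), and the beta-function identity that turns the constant of Theorem~\ref{T3} into $\wCC_p(\nu,\eta)$. That part is fine.

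The genuine gap is your treatment of the endpoint $p=\infty$. You transplant the structures from the $p=\infty$ case of Theorem~\ref{pred1}, but those belong to the $L_2$-recovery problem ($q=2$), whereas here the relevant problem after the reduction is the $q=1$ problem, whose extremal structure is different. Concretely, for the lower bound one must maximize $\int_{\mathbb R^d}|\psi(\xi)|\,|Fx(\xi)|\,d\xi$ subject to $\|Fx\cd\|_{L_\infty(\mathbb R^d)}\le\delta$ and $(2\pi)^{-d}\int_{\mathbb R^d}|\varphi_j(\xi)|^2|Fx(\xi)|^2\,d\xi\le1$; the pointwise Lagrangian $|\psi|u-\mathrm{const}\cdot s_2u^2$ on $0\le u\le\delta$ is maximized by the \emph{clipped} profile $u=\min\{\delta,\,c\,|\psi|/s_2\}$, never by a hard cut-off. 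Accordingly the paper takes $F\wx(\xi)=\delta\,\ov{s(\xi)}$ on $\{|\psi(\xi)|\ge\lambda s_2(\xi)\}$ and $F\wx(\xi)=\delta\,\ov{\psi(\xi)}/(\lambda s_2(\xi))$ elsewhere, and the optimal multiplier is $\left(1-\lambda\,s_2(\xi)/|\psi(\xi)|\right)_+$ (linear in $s_2/|\psi|$), not your $\left(1-\beta\,s_2(\xi)/|\psi(\xi)|^2\right)_+$. With your thresholded $\wx\cd$ the KKT condition fails on the set $\{0<|\psi|<\lambda\sqrt{s_2}\}$ (which has positive measure), so your lower bound is \emph{strictly} smaller than the true value $E_0$ of the sup; since the error of any method is at least $E_0$, your upper and lower estimates cannot coincide for any choice of $\beta$, and the claimed optimality at $p=\infty$ does not follow. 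The threshold scale is also off ($|\psi|>\lambda\sqrt{s_2}$ instead of $|\psi|\ge\lambda s_2$), another symptom of carrying over the $q=2$ geometry. To close the case $p=\infty$ you need the $q=1$ structures above, together with the convergence checks in polar coordinates (which use $\gamma_1\in(0,1)$ to control both the region $|\psi|\ge\lambda s_2$ and the tail integral of $|\psi|^2/s_2$ over $|\psi|<\lambda s_2$), exactly as in the paper's proof.
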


\begin{proof}
Using an estimate similar to \eqref{EDD} we have
\begin{equation*}
E_{p\infty}(\Lambda,\mathcal D)\ge\sup_{\substack{x\cd\in W_p^{\mathcal D}\\\|Fx\cd\|_{\lpd}\le\delta}}\|\Lambda x\cd\|_{L_\infty(\mathbb R^d)}.
\end{equation*}
Assume that $x\cd\in W_p^{\mathcal D}$ and $\|Fx\cd\|_{\lpd}\le\delta$. If $\wx\cd$ is such that
$F\wx(\xi)=\varepsilon(\xi)e^{-i\la t,\xi\ra}Fx(\xi)$,
where
\begin{equation*}
\varepsilon(\xi)=\begin{cases}
\dfrac{\ov{\psi(\xi)}\ov{Fx(\xi)}}{|\psi(\xi)Fx(\xi)|},&\psi(\xi)Fx(\xi)\ne0,\\
0,&\psi(\xi)Fx(\xi)=0,
\end{cases}
\end{equation*}
then we obtain $\wx\cd\in W_p^{\mathcal D}$, $\|F\wx\cd\|_{\lpd}\le\delta$ and
\begin{equation*}
\biggl|\int_{\mathbb R^d}\psi(\xi)F\wx(\xi)e^{i\la t,\xi\ra}\,d\xi\biggr|=\int_{\mathbb R^d}|\psi(\xi)Fx(\xi)|\,d\xi.
\end{equation*}
Hence
\begin{equation}\label{Ein}
E_{p\infty}(\Lambda,\mathcal D)\ge\frac1{(2\pi)^d}\sup_{\substack{x\cd\in W_p^{\mathcal D}\\\|Fx\cd\|_{\lpd}\le\delta}}\int_{\mathbb R^d}|\psi(\xi)Fx(\xi)|\,d\xi.
\end{equation}

Let $1\le p<\infty$. It follows from \eqref{Edv} that
\begin{equation*}
E_{p\infty}(\Lambda,\mathcal D)\ge E(p,1,2),
\end{equation*}
where, in the problem of the evaluation of $E(p,1,2)$, the functions $\varphi_j\cd$ should be
replaced by the function $(2\pi)^{-d/2}\varphi_j\cd$, and the function $\psi\cd$ by $(2\pi)^{-d}\psi\cd$. From Theorem~\ref{T3} we obtain
\begin{equation*}
E_{p\infty}(\Lambda,\mathcal D)\ge\frac1{(2\pi)^{d(1+\gamma_1)/2}}K\delta^{\gamma_1},
\end{equation*}
where
\begin{equation*}
K=\gamma_1^{-\frac{\gamma_1}p}\left(\frac{1-\gamma_1}n\right)^{-\frac{1-\gamma_1}2}
\Biggl(\frac{B\left(q_1\gamma_1/p,q_1(1-\gamma_1)/2\right)I}{|\nu+d(1/2-1/p)|(2\gamma_1 +(1-\gamma_1)p)}\Biggr)^{1/q_1}.
\end{equation*}
From the properties of the beta-function
\begin{multline*}
\frac{B\left(q_1\gamma_1/p,q_1(1-\gamma_1)/2\right)}{|\nu+d(1/2-1/p)|(2\gamma_1 +(1-\gamma_1)p)}\\=
\frac{B\left(q_1\gamma_1/p+1,q_1(1-\gamma_1)/2\right)(q_1\gamma_1/p+q_1(1-\gamma_1)/2)}
{|\nu+d(1/2-1/p)|(2\gamma_1 +(1-\gamma_1)p)q_1\gamma_1/p}\\
=\frac{B\left(q_1\gamma_1/p+1,q_1(1-\gamma_1)/2\right)}{2|\nu-\eta-d/2|}.
\end{multline*}
Thus,
\begin{equation*}
E_{p\infty}(\Lambda,\mathcal D)\ge \frac1{(2\pi)^{d(1+\gamma_1)/2}}\wCC_p(\nu,\eta)I^{1/q_1}\delta^{\gamma_1}.
\end{equation*}
Moreover, it follows from the same Theorem~\ref{T3} that
\begin{equation*}
\int_{\mathbb R^d}\left|\frac1{(2\pi)^d}\psi(\xi)F(\xi)-m(y)(\xi)\right|\,d\xi\le E(p,1,2),
\end{equation*}
where
\begin{equation*}
m(y)(t)=\frac1{(2\pi)^d}k\left(\xi_1^{\frac1{\nu+d(1/2-1/p)}}t\right)\psi(t)y(t),
\end{equation*}
and
\begin{multline*}
\xi_1=\frac\delta{\gamma_1^{1/p}}\left(\frac{1-\gamma_1}n\right)^{1/2}
\Biggl(\frac{B\left(q_1\gamma_1/p,q_1(1-\gamma_1)/2\right)I(2\pi)^{-dq_1(1+\gamma_1)/2}}{|\nu+d(1/2-1/p)|(2\gamma_1 +(1-\gamma_1)p)}\Biggr)^{1/2-1/p}\\
=\delta\gamma_1^{-\frac{q_1}{2p}}\left(\frac{(1-\gamma_1)
\wCC_p(\nu,\eta)I^{1/q_1}}{n(2\pi)^{d(1+\gamma_1)/2}}\right)^{q_1(1/2-1/p)}.
\end{multline*}
Consequently,
\begin{multline*}
\biggl|\frac1{(2\pi)^d}\int_{\mathbb R^d}\psi(\xi)F(\xi)e^{i\la t,\xi\ra}\,d\xi-\int_{\mathbb R^d}m(y)(\xi)e^{i\la t,\xi\ra}\,d\xi\biggr|\\
\le\int_{\mathbb R^d}\left|\frac1{(2\pi)^d}\psi(\xi)F(\xi)-m(y)(\xi)\right|\,d\xi\le E(p,1,2)
\le E_{p\infty}(\Lambda,\mathcal D).
\end{multline*}
It follows that the method $\wm(y)\cd$ is optimal, and the error of optimal recovery
coincides with $E(p,1,2)$.

Now we consider the case when $p=\infty$. Put
\begin{equation*}
s(\xi)=\begin{cases}\dfrac{\psi(\xi)}{|\psi(\xi)|},&\psi(\xi)\ne0,\\
1,&\psi(\xi)=0.\end{cases}
\end{equation*}
Let $\wx\cd$ be such that
\begin{equation*}
F\wx(\xi)=\begin{cases}\delta\ov{s(\xi)},&|\psi(\xi)|\ge\lambda s_2(\xi),\\
\dfrac{\delta\ov{\psi(\xi)}}{\lambda s_2(\xi)},&|\psi(\xi)|<\lambda s_2(\xi).
\end{cases}
\end{equation*}

We choose $\lambda>0$ such that
\begin{equation*}
\frac1{(2\pi)^d}\iRd|\varphi_j(\xi)|^2|F\wx(\xi)|^2\,d\xi=1,\quad j=1,\ldots,n.
\end{equation*}
Now, to find $\lambda$ we have the equation
\begin{equation*}
\frac{\delta^2}{(2\pi)^d}\int_{|\psi(\xi)|\ge\lambda s_2(\xi)}|\varphi_j(\xi)|^2\,d\xi
+\frac{\delta^2\lambda^{-2}}{(2\pi)^d}\int_{|\psi(\xi)|<\lambda s_2(\xi)}\frac{|\varphi_j(\xi)|^2|\psi(\xi)|^2
}{s_2^2(\xi)}\,d\xi=1.
\end{equation*}
If $\nu>\eta+d/2$, then from the fact that $\gamma_1\in(0,1)$ it follows that $\eta>-d$. In this case it is easy to check that $2\nu>\eta$ and $2\nu+d>0$. Passing to the polar transformation we obtain
\begin{multline*}
\frac{\delta^2}{(2\pi)^d}\int_{\Pi_{d-1}}\wva^2_j(\omega)J(\omega)\,d\omega\int_0^{\Phi_2(\omega)}
\rho^{2\nu+d-1}\,d\rho\\
+\frac{\delta^2\lambda^{-2}}{(2\pi)^d}\int_{\Pi_{d-1}}\frac{\wva^2_j(\omega)\wps^2(\omega)}
{\wc_2^2(\omega)}
J(\omega)\,d\omega\int_{\Phi_2(\omega)}^{+\infty}\rho^{-2\nu+2\eta+d-1}\,d\rho=1,
\end{multline*}
where
\begin{equation*}
\Phi_2(\omega)=\left(\frac{\wps(\omega)}{\lambda\wc_2(\omega)}\right)
^{\frac1{2\nu-\eta}}.
\end{equation*}
Thus,
\begin{equation*}
\frac{\delta^2}{(2\pi)^d}\lambda^{-\frac{2\nu+d}{2\nu-\eta}}\frac{4\nu-2\eta}
{(2\nu+d)(2\nu-2\eta-d)}I_j=1.
\end{equation*}

If $\nu<\eta+d/2$, then it follows from $\gamma_1\in(0,1)$ that $\eta<-d$, $2\nu<\eta$, and $2\nu+d<0$. Passing to the polar transformation we obtain
\begin{multline*}
\frac{\delta^2}{(2\pi)^d}\int_{\Pi_{d-1}}\wva^2_j(\omega)J(\omega)\,d\omega
\int_{\Phi_2(\omega)}^{+\infty}\rho^{2\nu+d-1}\,d\rho\\
+\frac{\delta^2\lambda^{-2}}{(2\pi)^d}\int_{\Pi_{d-1}}\frac{\wva^2_j(\omega)\wps^2(\omega)}
{\wc_2^2(\omega)}J(\omega)\,d\omega\int_0^{\Phi_2(\omega)}\rho^{-2\nu+2\eta+d-1}\,d\rho=1,
\end{multline*}
For this case we have
\begin{equation*}
\frac{\delta^2}{(2\pi)^d}\lambda^{-\frac{2\nu+d}{2\nu-\eta}}\frac{2\eta-4\nu}
{(2\nu+d)(2\nu-2\eta-d)}I_j=1.
\end{equation*}
Combining both of these cases and taking into account that $I_j=I/n$, $j=1,\ldots,n$, we get
\begin{equation*}
\lambda=\left(\frac{2\delta^2|2\nu-\eta|I}{(2\pi)^dn(2\nu+d)(2\nu-2\eta-d)}\right)^
{\frac{2\nu-\eta}{2\nu+d}}.
\end{equation*}

It follows by \eqref{Ein} that
\begin{multline*}
E_{\infty\infty}(\Lambda,\mathcal D)\ge\frac1{(2\pi)^d}\int_{\mathbb R^d}|\psi(\xi)F\wx(\xi)|\,d\xi=\frac{\delta}{(2\pi)^d}\int_{|\psi(\xi)|\ge\lambda
s_2(\xi)}|\psi(\xi)|\,d\xi\\
+\frac{\delta}{\lambda(2\pi)^d}\int_{|\psi(\xi)|<\lambda
s_2(\xi)}\frac{|\psi(\xi)|^2
}{s_2(\xi)}\,d\xi.
\end{multline*}
Using calculations similar to those that were above, we obtain
\begin{equation*}
E_{\infty\infty}(\Lambda,\mathcal D)\ge\frac{\delta|2\nu-\eta|\lambda^{-\frac{\eta+d}{2\nu-\eta}}I}{(2\pi)^d(\eta+d)(2\nu-2\eta-d)}\\
=E_0,
\end{equation*}
where
\begin{equation*}
E_0=\frac{(n|\nu+d/2|)^{\frac{\eta+d}{2\nu+d}}}{\eta+d}\left(\frac{(2\nu-\eta)I}{(2\pi)^d
(2\nu-2\eta-d)}\right)^{\frac{2\nu-\eta}{2\nu+d}}\delta^{\frac{2\nu-2\eta-d}{2\nu+d}}.
\end{equation*}

We prove that for all $x\cd\in X_\infty$ the equality
\begin{multline}\label{Toz}
\Lambda x(t)=\frac1{(2\pi)^d}\int_{|\psi(\xi)|\ge\lambda
s_2(\xi)}\left(\psi(\xi)-\lambda s(\xi)s_2(\xi)\right)Fx(\xi)e^{i\la t,\xi\ra}\,d\xi\\
+\frac\lambda{\delta(2\pi)^d}\int_{\mathbb R^d}s_2(\xi)Fx(\xi)\ov{F\wx(\xi)}e^{i\la t,\xi\ra}\,d\xi.
\end{multline}
holds. Indeed,
\begin{multline*}
\frac1{(2\pi)^d}\int_{|\psi(\xi)|\ge\lambda
s_2(\xi)}\left(\psi(\xi)-\lambda s(\xi)s_2(\xi)\right)Fx(\xi)e^{i\la t,\xi\ra}\,d\xi\\
+\frac\lambda{\delta(2\pi)^d}\int_{\mathbb R^d}s_2(\xi)Fx(\xi)\ov{F\wx(\xi)}e^{i\la t,\xi\ra}\,d\xi\\
=\frac1{(2\pi)^d}\int_{|\psi(\xi)|\ge\lambda
s_2(\xi)}\left((\psi(\xi)-\lambda s(\xi)s_2(\xi)\right)Fx(\xi)e^{i\la t,\xi\ra}\,d\xi\\
+\frac1{(2\pi)^d}\int_{|\psi(\xi)|\ge\lambda
s_2(\xi)}
\lambda s(\xi)s_2(\xi)Fx(\xi)e^{i\la t,\xi\ra}\,d\xi
+\frac1{(2\pi)^d}\int_{|\psi(\xi)|<\lambda
s_2(\xi)}\psi(\xi)Fx(\xi)e^{i\la t,\xi\ra}\,d\xi\\
=\frac1{(2\pi)^d}\int_{\mathbb R^d}\psi(\xi)Fx(\xi)e^{i\la t,\xi\ra}\,d\xi=\Lambda x(t).
\end{multline*}

We estimate the error of the method
\begin{equation*}
m(y)(t)=\frac1{(2\pi)^d}\int_{|\psi(\xi)|\ge\lambda
s_2(\xi)}\left(\psi(\xi)-\lambda s(\xi)s_2(\xi)\right)y(\xi)e^{i\la t,\xi\ra}\,d\xi.
\end{equation*}
We have
\begin{multline*}
|\Lambda x(t)-m(y)(t)|\le\biggl|\frac1{(2\pi)^d}\int_{\mathbb R^d}\psi(\xi)Fx(\xi)e^{i\la t,\xi\ra}\,d\xi\biggr.\\
\biggl.-\frac1{(2\pi)^d}\int_{|\psi(\xi)|\ge\lambda
s_2(\xi)}\left(\psi(\xi)-\lambda s(\xi)s_2(\xi)\right)Fx(\xi)e^{i\la t,\xi\ra}\,d\xi\biggr|\\
+\frac1{(2\pi)^d}\int_{|\psi(\xi)|\ge\lambda
s_2(\xi)}\left|\psi(\xi)-\lambda s(\xi)s_2(\xi)\right||Fx(\xi)-y(\xi)|\,d\xi.
\end{multline*}
If $x\cd$ such that
\begin{equation*}
\|Fx\cd-y\cd\|_{L_\infty(\mathbb R^d)}\le\delta,\quad\frac1{(2\pi)^d}\iRd|\varphi_j(\xi)|^2|Fx(\xi)|^2\,d\xi\le1,\ j=1,\ldots,n,
\end{equation*}
then, taking into account \eqref{Toz}, we obtain
\begin{equation*}
|\Lambda x(t)-m(y)(t)|\le\frac\lambda{\delta(2\pi)^d}\int_{\mathbb R^d}s_2(\xi)|Fx(\xi)||F\wx(\xi)|\,d\xi+\mu\le\frac{n\lambda}\delta+\mu,
\end{equation*}
where
\begin{equation*}
\mu=\frac\delta{(2\pi)^d}\int_{|\psi(\xi)|\ge\lambda
s_2(\xi)}\left(|\psi(\xi)|-\lambda s_2(\xi)\right)\,d\xi.
\end{equation*}
Passing to the polar transformation we find
\begin{gather*}
\frac\delta{(2\pi)^d}\int_{|\psi(\xi)|\ge\lambda
s_2(\xi)}
|\psi(\xi)|\,d\xi=
\frac{\delta\lambda^{-\frac{\eta+d}{2\nu-\eta}}}{(2\pi)^d|\eta+d|}I,\\
\frac{\delta\lambda}{(2\pi)^d}\int_{|\psi(\xi)|\ge\lambda
s_2(\xi)}
s_2(\xi)\,d\xi=\frac{\delta\lambda^{-\frac{\eta+d}{2\nu-\eta}}}
{(2\pi)^d|2\nu+d|}I.
\end{gather*}
Hence
\begin{equation*}
\mu=\frac{\delta\lambda^{-\frac{\eta+d}{2\nu-\eta}}|2\nu-\eta|}
{(2\pi)^d(\eta+d)(2\nu+d)}I.
\end{equation*}
It is easily checked that $n\lambda/\delta+\mu=E_0$, and therefore
\begin{equation*}
e_{\infty\infty}(\Lambda,\mathcal D,m)\le E_0\le E_{\infty\infty}(\Lambda,\mathcal D).
\end{equation*}
It follows that $m(y)\cd$ is an optimal method, and the error of optimal recovery
is $E_0$. It is easily checked that for $p=\infty$
\begin{equation*}
\frac1{(2\pi)^{d(1+\gamma_1)/2}}\wCC_\infty(\nu,\eta)I^{1/q_1}\delta^{\gamma_1}=E_0.
\end{equation*}

We evaluate $\xi_1$ for $p=\infty$. We have
\begin{equation}\label{xx}
\xi_1=\delta\left(\frac{(1-\gamma_1)
\wCC_\infty(\nu,\eta)I^{1/q_1}}{n(2\pi)^{d(1+\gamma_1)/2}}\right)^{q_1/2}=
\lambda^{\frac{\nu+d/2}{2\nu-\eta}}.
\end{equation}
The method $m(y)\cd$ can be written as
\begin{equation*}
m(y)(t)=F^{-1}\left(\left(1-\lambda\frac{s_2(\xi))}{|\psi(t)|}\right)_+
\psi(t)y(t)\right).
\end{equation*}
In view of \eqref{xx} we have
\begin{equation*}
m(y)(t)=F^{-1}\left(k\left(\xi_1^{\frac1{n+d/2}}t\right)\psi(t)y(t)\right)=\wm(y)(t).
\end{equation*}

Inequality \eqref{ene1} is proved by the analogy with the proof of Corollary~\ref{Sdva}.
\end{proof}

It is not difficult to formulate a corollary from Theorem~\ref{pred11} analogous to Corollary~\ref{Cor8} for the same $\Lambda$ and $\mathcal D=(D^{\nu e_1},\ldots,D^{\nu e_d})$.


\begin{thebibliography}{11}

\bibitem{An} F.I.~Andrianov, Multidimensional analogues of Carlson's inequality and its generalizations, Izv. Vyssh. Uchebn. Zaved. Mat. 1 (56) (1960) 3--7 (in Russian).

\bibitem{B} S.~Barza, V.~Burenkov, J.~Pe\v cari\'c, L.-E.~Persson, Sharp multidimentional multiplicative inequalities for weighted $L_p$ spaces with homogeneous weights, Math. Ineq. Appl. 1 (1998) 53--67.

\bibitem{Ca} F.~Carlson, Une in\'egalit\'e, Ark. Mat. Astr. Fysik 25B (1934) 1--5.

\bibitem{Le} V.I.~Levin, Sharp constants in inequalities of Carlson type, Dokl. Akad. Nauk. SSSR 59 (1948) 635--638 (in Russian).

\bibitem{MR} C.~A.~Micchelli, T.~J.~Rivlin, A survey of optimal
recovery, in Optimal Estimation in Approximation Theory, C.~A.~Micchelli and T.~J.~Rivlin, eds., Plenum Press, New York, 1977, pp.~1--54.

\bibitem{Osb} K.~Yu.~Osipenko, Optimal Recovery of Analytic Functions, Nova Science Publ., Huntington, New York, 2000.

\bibitem{Os14} K.Yu.~Osipenko, Optimal recovery of linear operators in non-Euclidean metrics, Sb. Math. 205 (10) (2014) 1442--1472.

\bibitem{Os} K.Yu.~Osipenko, Optimal recovery of operators and multidimensional Carlson type inequalities, J. Complexity 32 (1) (2016) 53--73.

\bibitem{Os21} K.~Yu.~Osipenko, Inequalities for derivatives with the Fourier transform, Appl. Comp. Harm. Anal., 53 (2021) 132--150.

\bibitem{Os22} K.~Yu.~Osipenko, Optimal recovery in weighted spaces with homogeneous weights, Sbornic: Mathematics, 213 (2022) 385-411.

\bibitem{Osb22} K.~Yu.~Osipenko, Introduction to Optimal Recovery Theory, Lan Publishing House, St.~Petersburg, 2022 (in Russian).

\bibitem{Pl} L.~Plaskota, Noisy Information and Computational Complexity, Cambridge University Press, Cambridge, 1996.

\bibitem{TW} J.~F.~Traub, H.~Wo\'zniakowski, A General Theory of
Optimal Algorithms, Academic Press, New York, 1980.

\end{thebibliography}
\end{document}